\documentclass[12pt]{article}
\usepackage[margin =1in]{geometry}
\usepackage{amssymb,amsthm}
\usepackage{amsmath}
\usepackage{enumerate}
\usepackage{enumitem}
\usepackage{mathtools}

\usepackage{hyperref}
\usepackage{subcaption}
\usepackage{cite}
\usepackage{color}

\usepackage{graphicx}
\numberwithin{equation}{section}
%%%%%%%%%% Math %%%%%%%%%%%%%%%

\newcommand{\range}{\mbox{\rm range}\,}
\newcommand{\epi}{\mathrm{epi}\,}
\newcommand{\cH}{\mathcal{H}\,}

\newcommand{\inclu}[0] {\ar@{^{(}->}}

\newcommand{\spann}{\text{span}}

\newcommand{\gph}{{\rm gph}\,}

\newcommand{\dist}{{\rm dist}}
\newcommand{\R}{\mathbb{R}}

\newcommand{\cU}{\mathcal{U}}
\newcommand{\cR}{\mathcal{R}}

\newcommand{\sign}{\mathrm{sign}}

\newcommand{\RR}{\mathbb{R}}
\newcommand{\stableset}{W_{\mathrm{loc}}^{\mathrm{cs}}}
\newcommand{\mstep}{\hat \rho}

\newcommand{\modelweak}{\eta}
\newcommand{\functionweak}{\rho}
\newcommand{\approxacc}{\beta}
\newcommand{\algstep}{\tau}
\newcommand{\cK}{\mathcal{K}}

\newcommand{\lf}{\operatornamewithlimits{liminf}}

%%%%%%%%% Convex Analysis %%%%%%%%%
\newcommand{\prox}{{\rm prox}}
\newcommand{\dom}{\text{dom}\,}

\newcommand{\argmin}{\operatornamewithlimits{argmin}}

\newcommand{\NN}{\mathbb{N}}

%%%% Probability %%%%%%%%%%%%%%%%%

%%%%%%%% Theorems %%%%%%%%%%%%%
\newtheorem{thm}{Theorem}[section]
\newtheorem{definition}[thm]{Definition}

\newtheorem{lem}[thm]{Lemma}
\newtheorem{cor}[thm]{Corollary}

\theoremstyle{remark}

\newcommand{\cM}{\mathcal{M}}

\newcommand{\cX}{\mathcal{X}}
\newcommand{\cL}{\mathcal{L}}
\newcommand{\cP}{\mathcal{P}}

\usepackage{mathtools}

\usepackage[boxruled]{algorithm2e}

\begin{document}
	
	\title{Proximal methods avoid active strict saddles\\ of weakly convex functions}
	%Active strict saddles in nonsmooth optimization}
	%\subtitle{Do you have a subtitle?\\ If so, write it here}
	
	%\titlerunning{Short form of title}        % if too long for running head
	
	\author{Damek Davis\thanks{School of Operations Research and Information Engineering, Cornell University,
Ithaca, NY 14850, USA;
\texttt{people.orie.cornell.edu/dsd95/}.}\and 
Dmitriy Drusvyatskiy
\thanks{Department of Mathematics, University of Washington,
Seattle, WA 98195, USA;
\texttt{sites.math.washington.edu/~ddrusv/}. Research of Drusvyatskiy was supported by the NSF DMS
1651851 and CCF 1740551 awards.}
}

	\date{}
	\maketitle
\begin{abstract}
We introduce a geometrically transparent strict saddle property for nonsmooth functions. This property guarantees that simple proximal algorithms on weakly convex problems converge only to local minimizers, when randomly initialized. We argue that the strict saddle property may be a realistic assumption in applications, since it provably holds for generic semi-algebraic optimization problems. 
\end{abstract}

\noindent{\bf Key words:} strict saddle, proximal gradient, proximal point, center stable manifold theorem, semi-algebraic
\bigskip
	
\noindent{\bf AMS subject Classification:} 65K05, 65K10, 90C30

\section{Introduction}
Nonconvex optimization techniques
	are increasingly playing
a major role in modern signal processing, high dimensional statistics, and machine learning.
A driving theme, 
	fully supported by empirical evidence, 
	is 
that simple algorithms 
	often work well 
		in 
		highly nonconvex 
			and 
		even nonsmooth 
	settings.
Gradient descent, for example, 
	often 
		finds     
	points 
		with 
		small objective value, 
		despite existence 
			of 
			many highly suboptimal critical points. 
A growing body 
	of 
	literature 
		provides 
	one compelling explanation 
		for 
		this phenomenon. 
Namely, typical smooth objective functions 
	provably satisfy 
the {\em strict saddle property},
meaning 
each critical point 
	is 
either 
	a local minimizer 
			or 
	has 
	a direction 
		of 
		strictly negative curvature (e.g.,~\cite{ge2016matrix,sun2015nonconvex,bhojanapalli2016global,ge2017no,sun2018geometric}).
For such functions, 
	randomly initialized gradient-type methods
		provably converge 
	to local minimizers,
		escaping all strict saddle points~\cite{gradient_descent_jason, panageas2016gradient}.
Moreover, 
stochastically perturbed gradient methods 
	escape 
strict saddles efficiently,  
	indeed,
	in polynomial time \cite{du2017gradient,jin2017escape,ge2015escaping}.

Smoothness of the objective 
	plays 
a crucial role in the existing literature
		on 
		saddle avoidance.
Some extensions 
	to 
	constrained optimization do
		exist.  
	The papers \cite{criscitiello2019efficiently,sun2019escaping,ge2015escaping} 
		investigate 
	saddle point avoidance for the problem of minimizing a smooth functions over a smooth manifold.
The works \cite{hallakfinding,mokhtari2018escaping,nouiehed2018convergence}
	propose
algorithms
	for 
	minimizing a smooth objective
		over 
		a closed convex set.
At each step 
	of 
	these algorithms, 
	one
		must minimize 
	a nonconvex quadratic
		over 
		a certain convex set (an NP hard problem in general).
The work~\cite{noisysticky} 
	proposes 
a polynomial time first-order algorithm 
	for 
	minimizing a smooth objective 
		over 
	linear inequality constraints.\footnote{This work appeared concurrently with our manuscript.}
At each step 
	of 
	this algorithm, 
	one 
		identifies 
	the ``active linear constraints"
			and 
	then 
		attempts   
	to find a ``second-order stationary point"
		of 
		the loss 
			in 
			the restricted subspace.

Though impressive in scope,
	existing work 
		has yet 
	to answer the following question:
\begin{quote}
Do simple algorithms 
	on 
	typical 
		nonsmooth 
			and 
		nonconvex 
	optimization problems 
	converge 
only to local minimizers?
\end{quote}

\noindent 
This question as stated 
	is 
purposefully vague, 
	since  
		``simple algorithms'' and ``typical optimization problems'' 
			can be interpreted 
		in multiple ways. 
Let us 
	try to formalize 
both ideas.
First, 
	if one 
		believes 
	that gradient descent 
		is 
	a canonical first-order method 
		for 
		smooth minimization, 
		it 
			is 
		natural to focus 
			on 
			three concrete algorithms 
				for 
					nonsmooth 
						and 
					nonconvex 
				problems:
			the 
			proximal point \cite{MR0290213,MR0298899,MR0410483,MR0201952}, 
			proximal gradient \cite{nesterov2013gradient,beck}, 
				and 
			proximal linear \cite{nesterov2007modified,burke_com,prox_error,comp_DP,prox} methods.
This is the path we follow in the current work.

The latter issue, 
	identifying a typical optimization problem, 
		is 
	more subtle. 
To motivate our approach,
	let us 
		revisit 
	the following question: 
	why 
		is 
	the strict saddle property a reasonable assumption 
		for 
		smooth minimization? 
A first compelling reason 
	is 
that the property 
	holds 
in practice 
	for 
		specific problems 
		of 
		 interest. 
There 
	is, however, 
a more classical justification, 
	one rooted in stability to perturbations. 
Namely, 
	consider the task 
		of 
	minimizing a smooth function $f$ 
		on 
		$\R^d$. 
Then for 
	a full measure set 
		of 
		perturbations $v\in\R^d$, 
	the perturbed function $x\mapsto f(x)-\langle v,x\rangle$ 
		is guaranteed to satisfy 
	the strict saddle property---a direct consequence of Sard's theorem. 
Consequently, 
	in a precise mathematical sense, 
	the strict saddle property 
		holds {\em generically} 
	in smooth optimization. 
This justification 
	does not suggest 
one 
	can omit verification of 
	the strict saddle property 
		in 
		concrete circumstances, but 
it
		does suggest 
	that the strict saddle property 
		is 
	widely prevalent.

Seeking 
	to identify 
a similarly reasonable class of nonsmooth objectives
	on 
	which simple algorithms converge to local minimizers, the current paper accomplishes the following.
\begin{quote}
We 
 	formulate 
natural geometric conditions, 
	guaranteeing 
		the proximal point, 
		proximal gradient, 
	 		and 
		proximal linear algorithms 
		escape 
	all saddle points. 
Moreover, 
the proposed conditions 
	are 
generic: 
	they 
		hold 
	for almost all linear perturbations 
		of 
			weakly convex 
				and 
			semi-algebraic problems.
\end{quote}
The class of optimization problems we consider is broad. A function $f$
is called
\emph{$\rho$-weakly convex}
if the assignment $x\mapsto f(x)+\frac{\rho}{2}\|x\|^2$ 
is convex for some $\rho>0$.\footnote{Weakly convex functions also go by other names such as lower-$C^2$, uniformly prox-regularity, paraconvex, and semiconvex. We refer the reader to the seminal works on the topic \cite{fav_C2,prox_reg,Nurminskii1973,paraconvex,semiconcave}.}
Common examples include 
 pointwise maxima 
of 
smooth functions 
and 
all compositions 
of 
Lipschitz convex functions 
with 
smooth maps. For detailed contemporary examples, 
we 
refer 
the reader to \cite{davis2019stochastic,drusvyatskiy2017proximal,charisopoulos2019low,duchi2018stochastic,jin2019local}. The genericity guarantee applies to semi-algebraic functions\footnote{A function is called semi-algebraic if its graph
	decomposes 
	into a finite union 
	of 
	sets, each defined by finitely many polynomial inequalities.}, and more broadly, to those that are definable in an o-minimal structure---a  virtually exhaustive function class in applications.

\subsection{The role of curvature}

To motivate our core geometric conditions,  
	we revisit the role that curvature
		plays 
	in saddle-point avoidance. 
Setting the stage 
	for 
	the rest of the paper, 
	consider the task 
		of 
		minimizing a weakly convex function $f$ 
			on 
			$\R^d$.
First-order optimality conditions 
	show 
that any local minimizer $\bar x$
	of 
	$f$ 
		satisfies 
		the {\em criticality condition}:
$$df(\bar x)(v)\geq 0\quad \textrm{for all } v\in \R^d,$$
where $df(\bar x)(v)$ 
	denotes 
the directional derivative 
	of 
	$f$ at $\bar x$ 
		in 
		direction $v$ (see Definition~\ref{defn:var_constr}). 
Conversely, 
	sufficient conditions 
		for 
		local optimality 
			at 
			a critical point $\bar x$ 
		require 
	a close look 
		at 
		the second-order variations 
			of 
			$f$
			along particular directions, 
	namely those where the directional derivative is zero.
Mirroring the
		smooth setting, 
	one may naively  call 
	a critical point $\bar x$ a strict saddle 
			if 
				there exists 
			a direction $v$ such that 
				$df(\bar x)(v)=0$ 
					and 
				$f$ decreases quadratically along $v$. 
This definition, however,
	is 
insufficient for saddle avoidance: 
simple 
	examples 
		show 
	that typical algorithms 
		can converge
	to such saddle points
		from 
		a positive measure 
			of 
			initial conditions.

\begin{center}
Negative curvature alone 
	does 
	not guarantee escape from saddle points.
 \end{center}

To illustrate what can go wrong, consider the example
\begin{equation}\label{eqn:pathological}
\min_{(x,y)\in\R^2}~ f(x,y)=(|x|+|y|)^2-2x^2,
\end{equation}
the graph of which
		is 
	shown 
		in 
		Figure~\ref{gph_bad_func}. 
First, 
	observe
that the origin
		meets
	the conditions of the candidate ``strict saddle" definition.
Indeed, 
 $f$ 
	is 
	differentiable
		at 
		the origin
			and 
		the origin is a critical point.
Moreover, 
	$f$ 
		decreases quadratically 
	along all directions making a small angle 
		with 
		the $x$-axis. 
Next, we
	turn to algorithm dynamics.
Figure~\ref{sublevel_flow_nonmooth} 
	depicts 
the subgradient flow $-\dot{\gamma}(t)\in \partial f(\gamma(t))$. 
From the picture 
	we
		find
	a positive measure cone, 
		 surrounding the $y$-axis 
		 	and
		consisting 
			of 
			origin-attracted initial conditions.		
Moreover, 
	we 
		show
	in Appendix~\ref{sec:bad_example_detail},  
		that 
		a typical algorithm---the proximal point method---initialized 
	anywhere within this cone 
		also converges 
	to the origin,
	illustrating the inadequacy of the definition.
While this argument 
	shows
that negative curvature
	is
	insufficient 
		for 
		nonsmooth optimization, 
it 
	can be made 
even more definitive 
	by 
	smoothing the problem at hand.
Namely, 
	an alternative view 
		of
	the proximal point method 
		recognizes
	that the dynamics of the algorithm 
		coincide
	with the dynamics of gradient descent 
		on 
		a $C^1$ smooth approximation 
			of 
			$f$, called the \emph{Moreau envelope} (see Section~\ref{sec:moreau}).
The smooth envelope, whose graph and gradient flow are shown in Figures~\ref{gph_bad_func_mooth} and \ref{sublevel_flow_mooth},
	has 
the same cone 
	of 
	directions 
		of
		second-order negative curvature as $f$,
			but despite its
				smoothness 
					and 
				negative curvature,
		gradient descent cannot escape the origin. The problem persists under a variety of different choices of the step-size.
Note that there is  no contradiction with 
the saddle avoidance property 
	of 
	gradient descent 
		on 
		smooth functions,
		since 
			the envelope
				is  
			not $C^2$, 
			but merely $C^1$-smooth
			around the origin.
Although this example 
	appears 
damning at first, 
	it 
		is
	 highly unstable,
	since 
	small linear tilts 
		of
		the function 
			do not exhibit 
		the same pathological behavior around critical points---a direct consequence of the forthcoming results. 
\begin{figure*}[h!]
	\centering
	\begin{subfigure}{0.5\textwidth}
		\centering
		\includegraphics[width=\textwidth]{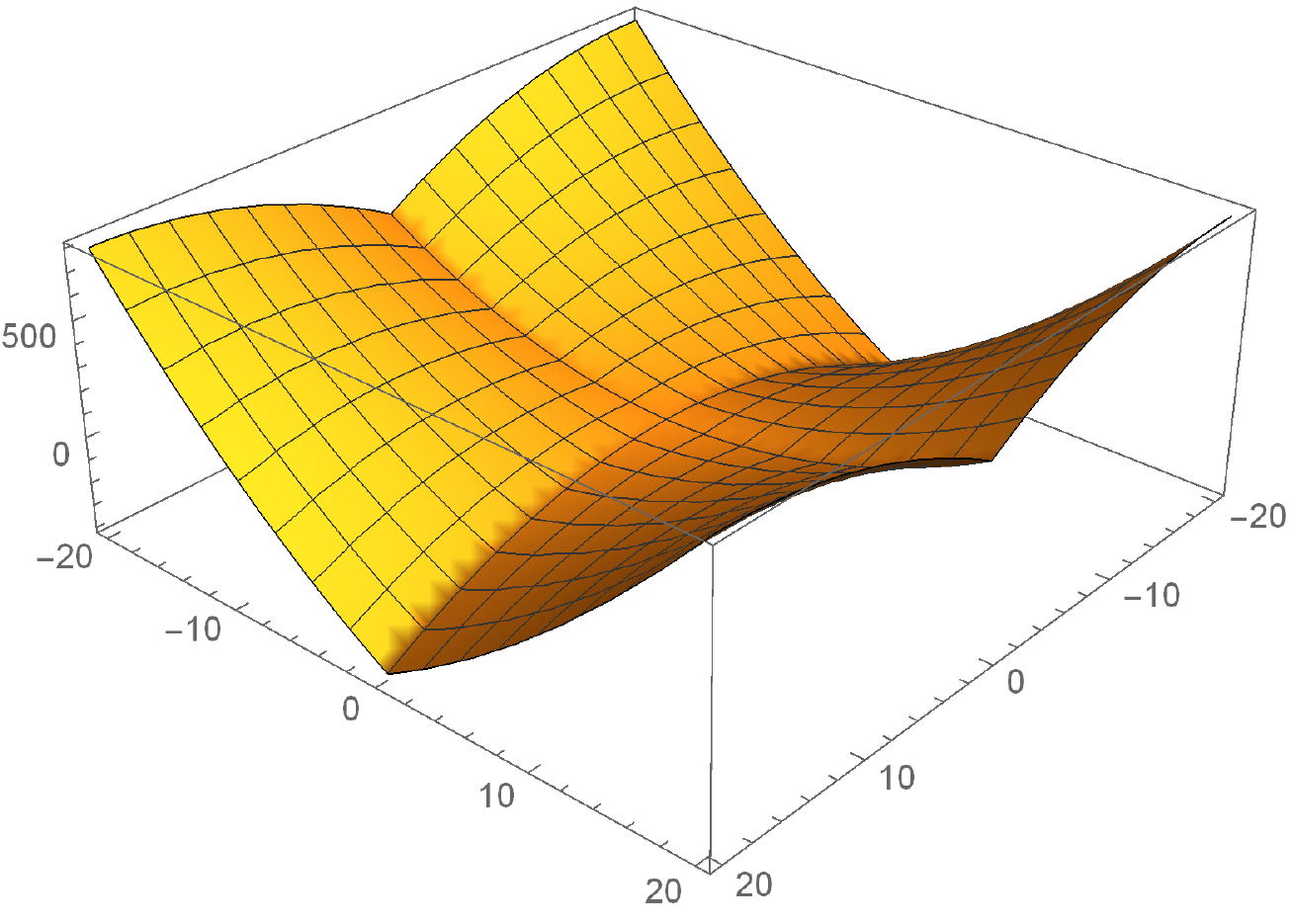}
		\caption{The function $\displaystyle f(x,y):=(|x|+|y|)^2-2x^2$}\label{gph_bad_func}
	\end{subfigure}%
	~ 
	\begin{subfigure}{0.5\textwidth}
		\centering
		\includegraphics[scale=0.47]{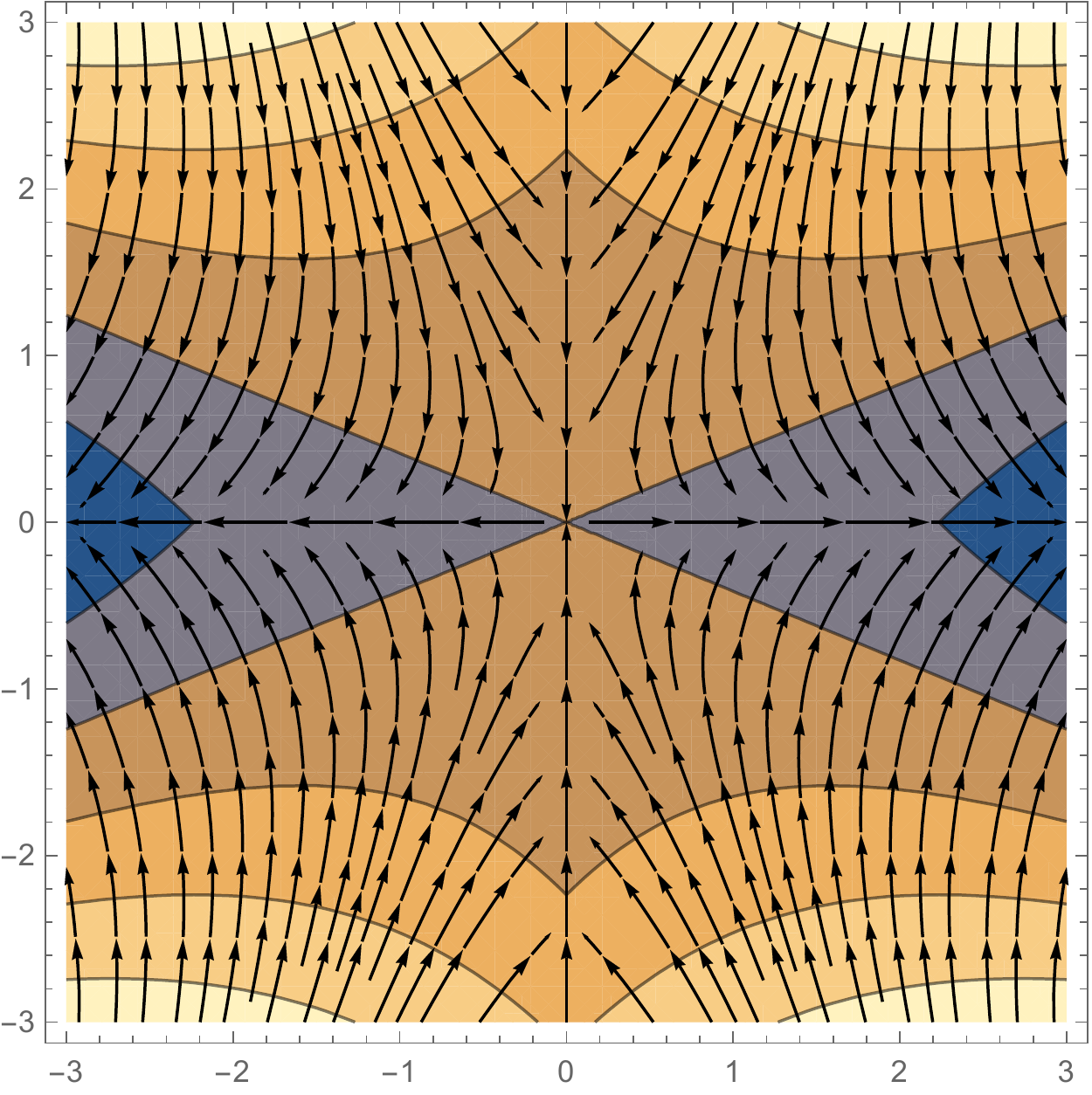}
		\caption{Subgradient flow $\dot{\gamma}\in -\partial f (\gamma)$}\label{sublevel_flow_nonmooth}
	\end{subfigure}
	\vspace{5pt}
	
		\begin{subfigure}{0.5\textwidth}
		\centering
		\includegraphics[width=\textwidth]{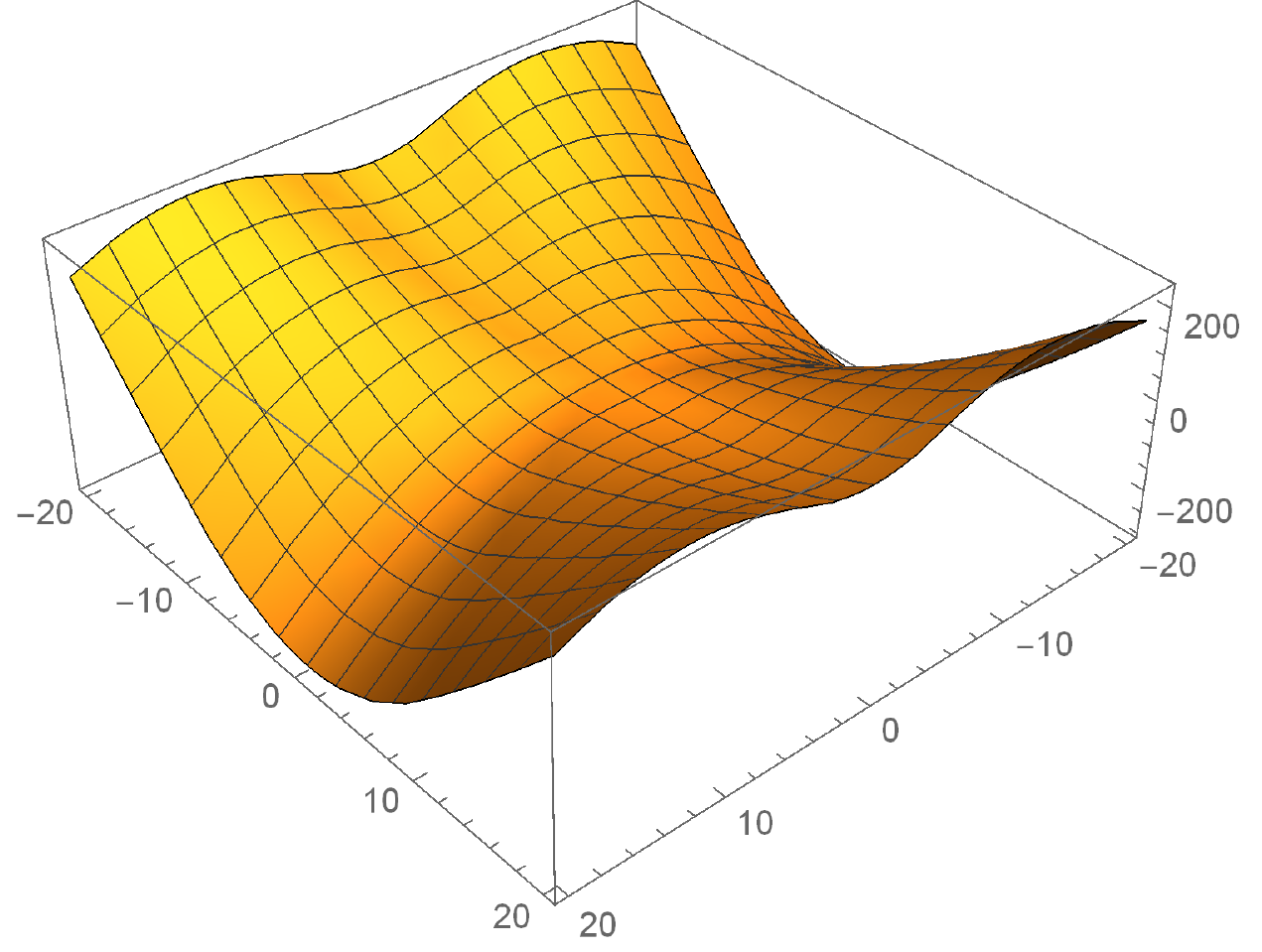}
		\caption{Envelope $\displaystyle \hat f(z):=\min_{z'\in\R^2} \left\{f(z')+3 \|z'-z\|^2\right\}$}\label{gph_bad_func_mooth}
	\end{subfigure}%
	~ 
	\begin{subfigure}{0.5\textwidth}
		\centering
		\includegraphics[scale=0.47]{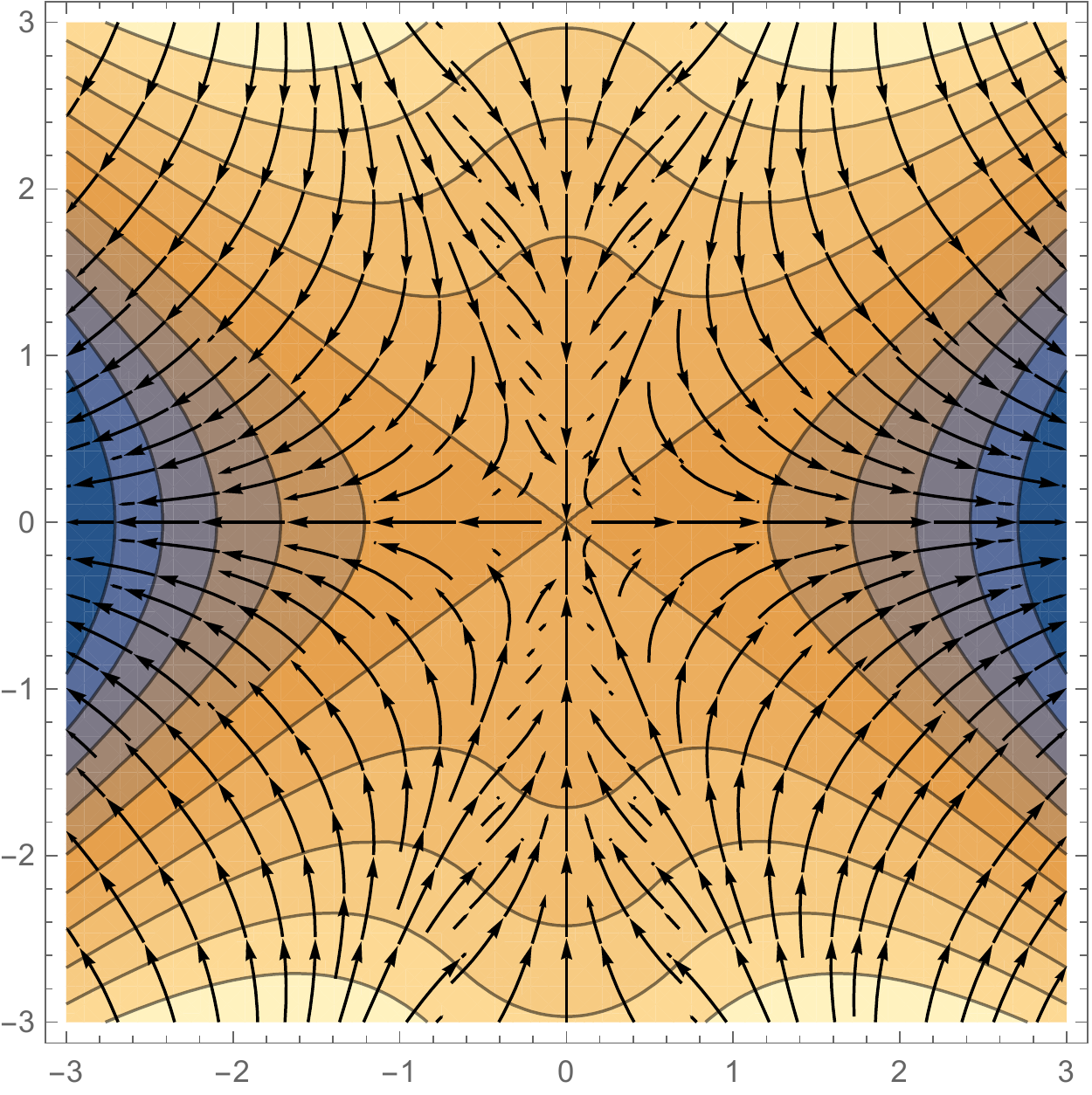}
		\caption{Gradient flow $\dot{\gamma}= -\nabla \hat f (\gamma)$}\label{sublevel_flow_mooth}
	\end{subfigure}
	\caption{The function $f$ in \eqref{eqn:pathological}, its Moreau envelope, and  their subgradient flows.}
\end{figure*}

\subsection{The role of the active manifold}
We 
	have seen 
that negative curvature alone
	is
insufficient for saddle avoidance. 
We
	argue 
here that 
	in 
	addition
		to negative curvature, 
	one
		must 
	make a structural assumption 
		on 
		the way nonsmoothness manifests.  
To 
	illustrate 
		and 
	contrast 
	with 
	example \eqref{eqn:pathological} above, 
	consider:
	\begin{equation}\label{eqn:p_smooth_ex}
\min_{(x,y)\in\R^2}~ g(x,y)=|x|-y^2.
\end{equation}
The graph 
	of $g$ 
		is 
	shown 
		in 
		Figure~\ref{subsec:partlysmooth_man}, 
	while its subgradient flow 
		appears 
	in 
	Figure~\ref{subsec:partlysmooth_grad}. 
Looking at the figure, 
we 
	see 
that the subgradient flow of $g$ 
	sharply contrasts
with that of the pathological example~\eqref{eqn:pathological}.
Indeed, 
while 
	both functions
		have 
	directions
		of 
		negative curvature, 
the set 
	of 
	origin-attracted initial conditions of the flow $-\partial g$
		is simply 
	the $x$-axis---a measure zero set.
This favorable behavior  
	of 
	$g$ 
	arises 
because its nonsmoothness manifests in a structured way: 
its unique critical point $\bar z$ (the origin) 
	lies 
on 
	a smooth manifold $\cM$ (the $y$-axis).
The function $g$ then
			varies 
		smoothly along $\cM$ 
				and
		sharply normal to $\cM$ meaning:
$$\inf \{\|v\|: v\in \partial g(z),~z\in U\setminus \cM\}>0,$$
where $U$ is some neighborhood of $\bar z$. 
Such ``active manifolds'' 
	have 
classical roots 
	in 
	optimization
		and
	serve 
as a far reaching
		generalization
			of 
			``active sets"
				in 
				nonlinear programming. 
Important references 
	include both
		the original works \cite{Al-Khayyal-Kyparisis91,Burke-More88, Burke90,Calamai-More87,Dunn87,Ferris91,Flam92}
	 	and 
		the more recent work 
			on identifiable surfaces \cite{Wright}, $UV$-decomposition \cite{LOS},
			partial smoothness \cite{lewis_active}, 
				and 
			cone reducibility \cite{Bon_Shap}. 
Here, 
	we most closely 
		follow
	the framework
		developed 
		in 
		\cite{ident}. 

\begin{figure*}[h!]
    \centering
    \begin{subfigure}{0.5\textwidth}
        \centering
        \includegraphics[width=\textwidth]{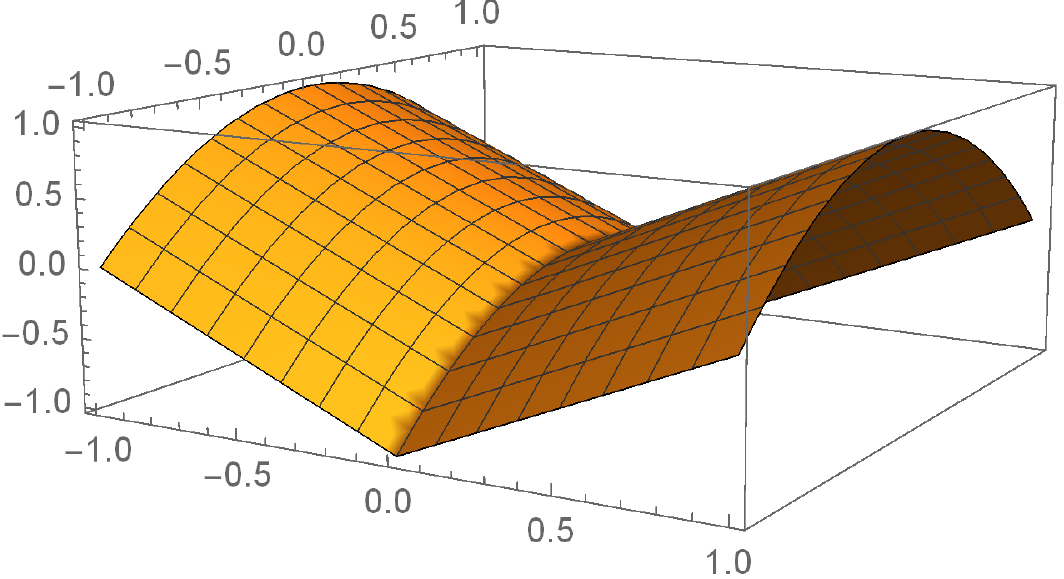}
        \caption{The function $g(x,y)$}\label{subsec:partlysmooth_man}
    \end{subfigure}%
    ~ 
    \begin{subfigure}{0.5\textwidth}
        \centering
        \includegraphics[scale=0.5]{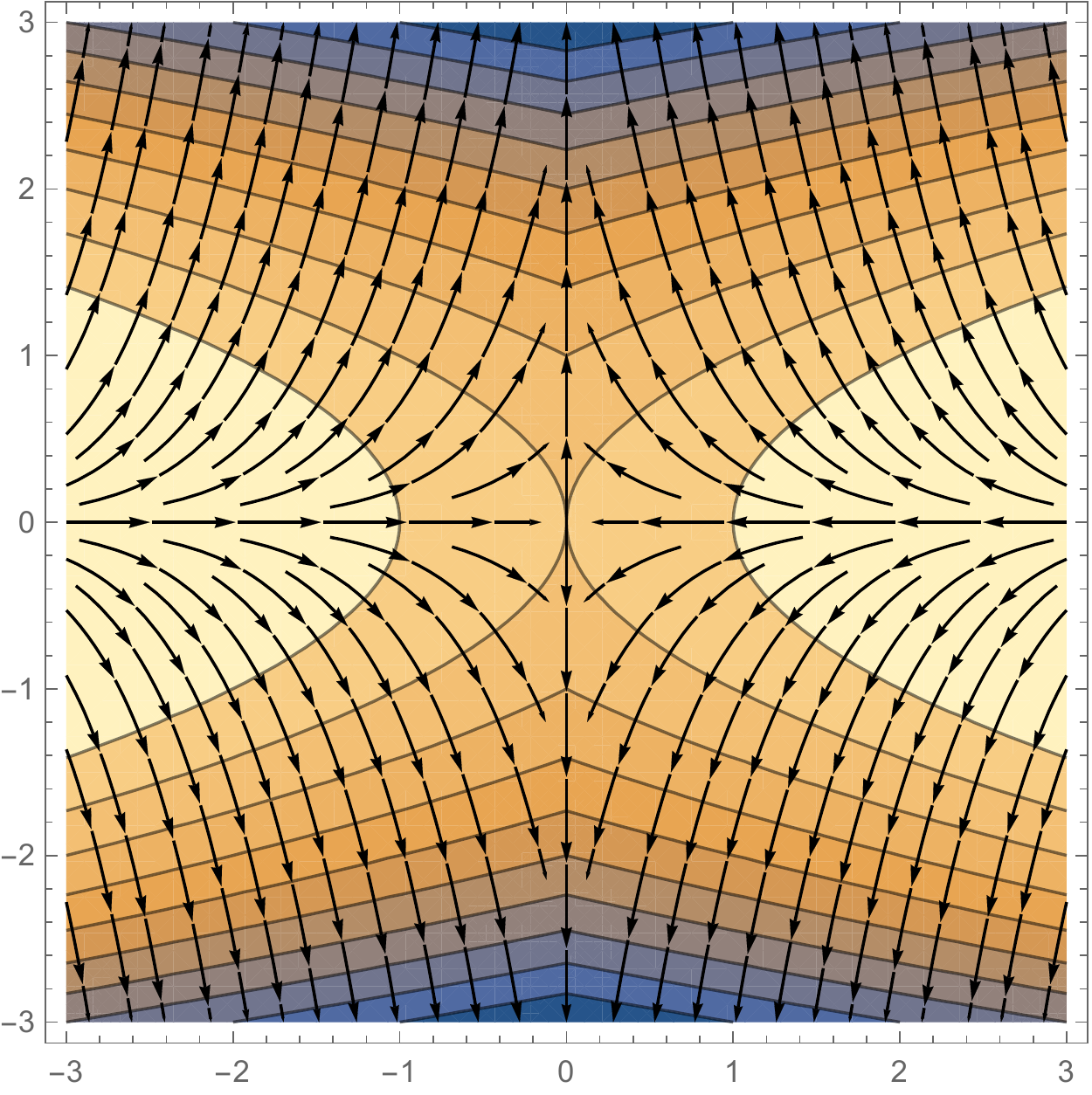}
        \caption{Subgradient flow $\dot{\gamma}\in -\partial g (\gamma)$}\label{subsec:partlysmooth_grad}
    \end{subfigure}
   \caption{The function $g(x,y)=|x|-y^2$ has an active manifold at the origin.}\label{fig:activemanifold}
\end{figure*}

\subsection{Escape from saddles by the center stable manifold theorem}\label{sec:intro_escape}
Formalizing the favorable behavior 
	of 
	 example \eqref{eqn:p_smooth_ex},
	we 
		will call 
	a critical point $\bar x$ 
		of 
		a function $g$ a {\em strict saddle} whenever 
			(i) $g$ admits an active manifold containing $\bar x$, 
				and 
			(ii) the function $g$ 
				decreases 
			quadratically along some direction $v$ 
			satisfying $dg(\bar x)(v)=0$. 
A function $g$ 
	is 
said to have the {\em  strict saddle property} 
	if each 
		of 
		its critical points 
			is 
		either 
			a local minimizer 
				or 
			a strict saddle.\footnote{Perhaps more appropriate would be the terms {\em active strict saddle} and the {\em active strict saddle property}. For brevity, we omit the word ``active."} 
Though it may seem that this definition 
	is stringent at first,
	the strict saddle property 
		is 
	in a precise mathematical sense generic. 
Namely, 
	it 
		follows 
	from \cite{drusvyatskiy2016generic} that 
		given any semi-algebraic weakly convex function $g$, 
		the perturbed function $g_v(x)=g(x)-\langle v,x\rangle$ 
			has 
		the strict saddle property for almost all $v\in\R^d$.\footnote{Weak convexity is not essential here, provided one modifies the definitions appropriately. Moreover, this guarantee holds more generally for functions definable in an o-minimal structure.}
In particular, 
	almost all linear perturbations 
		of 
	the function $f$ 
		in 
		\eqref{eqn:pathological} 
			do have 
		the  strict saddle property. That being said, it is important to note that under more nuanced perturbations, the strict saddle property may fail. For example, the classical NP-complete problem of checking copositivity of a matrix $A\in \R^{d\times d}$ amounts to verifying if the origin $\bar x=0$ locally minimizes the quadratic $x^TAx$ over the nonnegative orthant $\R^d_+$. It is straightforward to see that this constrained  problem  does not admit an active manifold at $\bar x$ for any matrix $A$.

With the definition 
	of 
a strict saddle  
		at hand, 
	we 
		can now outline 
	the main results
		of 
		the paper. 
As in the smooth setting, first explored in the seminal paper~\cite{gradient_descent_jason}, 	our arguments 
		will be based 
	on the center stable manifold theorem. 
Namely,
	we will interpret 
the three simple minimization algorithms 
	as 
	fixed point iterations 
$$x_{k+1}=S(x_{k})\qquad \textrm{ for some maps }S\colon\R^d\to\R^d.$$
Table~\ref{table:update_map} 
	lists 
the maps $S(\cdot)$ 
	for 
	the 
		proximal point, 
		proximal gradient, 
			and 
		proximal linear algorithms. 
In each case, 
the fixed points 
	of 
	$S(\cdot)$ 
		are 
	precisely the critical points 
		of 
		the minimization problem.

\begin{table}[h!]
\begin{center}
	\begin{tabular}{|l |l |l|} 
		\hline
		Algorithm& Objective  & Update function $f_x(y)$ \\ [0.5ex] 
		\hline\hline
		Prox-point  & $\displaystyle  r(x)$ & $\displaystyle r(y)+\tfrac{1}{2\mu}\|y-x\|^2$ \\ 
		\hline
		Prox-gradient  & $\displaystyle g(x)+r(x)$  & $\displaystyle g(x)+\langle \nabla g(x),y-x\rangle+r(y)+\tfrac{1}{2\mu}\|y-x\|^2$  \\
		\hline
		Prox-linear  & $\displaystyle h(F(x))+r(x)$ & $\displaystyle  h(F(x)+\nabla F(x)(y-x))+r(y)+\tfrac{1}{2\mu}\|y-x\|^2$  \\
		\hline
	\end{tabular}
\end{center}
\caption{The three algorithms with the update $S(x)=\argmin_y f_x(y)$; we assume $h$ is convex, $r$ is weakly convex, and both $g$ and $F$ are smooth.}\label{table:update_map}
\end{table}

To put our guarantees in context, 
	it 
		will be 
	useful to recall the center stable manifold theorem. 
To this end, 
	suppose that the iteration map $S(\cdot)$ 
		is 
	$C^1$-smooth 
		on 
		a neighborhood 
			of 
			some fixed point $\bar x$. 
Then $\bar x$ 
	is 
called an {\em unstable fixed point} 
	of 
		$S$ 
		if the Jacobian $\nabla S(\bar x)$ 
			has 
		at least one eigenvalue whose magnitude 	
			is 
		strictly greater than one. 
The center stable-manifold theorem \cite[Theorem III.7]{shub2013global}
	guarantees 
the following: 
	if $\bar x$ is an unstable fixed point 
		of 
		$S$ and the Jacobian $\nabla S(\bar x)$ is invertible, 
	then almost all initializers $x$ 
		in 
		a neighborhood $U$ 
			of 
			$\bar x$ 
		generate 
	iterates $\{S^k(x)\}_{k\geq 1}$ that eventually escape the neighborhood. 
More precisely, 
	the theorem 
		guarantees
	that the set 
		of 
		initial conditions
$$\left\{x\in U\colon S^k(x)\in U\textrm{ for all } k\geq 1\right\}$$
	has zero
Lebesgue measure. 
All that 
	is needed 
to globalize this guarantee 
	is 
to ensure that the preimage $S^{-1}(V)$ 
	of 
	any measure zero set $V$ 
		is 
	itself measure zero. 
Then for almost all initial conditions $x\in\R^d$, 
	the limit $\lim_{k\to\infty}S^k(x)$, when it exists, 
		is 
	not an unstable fixed point 
		of $S$. 
A straightforward way to ensure that the inverse $S^{-1}$ 
respects null sets 
	is 
by introducing the relaxation map:
\begin{equation}\label{eqn:tmap}
T(x):=(1-\alpha)x+\alpha S(x).
\end{equation}
Both 
	$T$ 
		and 
	$S$ 
	have 
		the same fixed points, and any fixed point $\bar x$ at which  $\nabla S(\bar x)$ has a real eigenvalue strictly greater than one is an unstable fixed point of $T$. Moreover, 
	if the map $S$ 
		is 
	Lipschitz, 
	then the inverse $T^{-1}$ 
		preserves 
	null-sets for sufficiently small $\alpha\in (0,1)$.

\subsection{The main results}
We 
	can now summarize 
our main results:
\begin{quote}
We
	show
that around each strict saddle
	of 
	the problem, 
each 
	of 
	the iterations maps $S(\cdot)$  
		in 
		Table~\ref{table:update_map} 
	is 
$C^1$ smooth.
	Moreover,  
	 if $\bar x$ is a strict saddle, then the Jacobian $\nabla S(\bar x)$ has a real eigenvalue strictly greater than one. 
\end{quote}
From this result, 
the center stable manifold theorem 
	guarantees
that iteration~\eqref{eqn:tmap} locally escapes strict saddles. 
Seeking to globalize the guarantees, 
	we 
		compute 
	the global Lipschitz constants 
		for 
			the 
				proximal point 
					and 
				proximal gradient 
				methods. 
We 
	deduce 
that, when randomly initialized, 
the relaxed iterations \eqref{eqn:tmap} 
	for 
	both the
		proximal point 
			and 
		proximal gradient 
		methods
			converge 
		to local minimizers 
			of 
			weakly convex functions, 
			provided they 
				have 
			the  strict saddle property. 
On the other hand, 
without placing further restrictions on the problem data, 
we 
	are unable 
to compute the global Lipschitz constant 
	of 
	the map $S(\cdot)$ corresponding 
		to 
		the proximal linear algorithm.
We 
	leave 
it as an intriguing open question to determine Lipschitz properties 
	of 
	the proximal linear update.

The outlined results 
	may seem 
surprising at first: 
	the optimization problem 
		is 
	nonsmooth 
		and 
	yet we 
		prove 
	the iteration maps $S(\cdot)$ 
		are 
	$C^1$-smooth around any strict saddle. 
The reason 
	is
transparent 
	and 
derives 
	from 
	the interplay 
		between 
	the active manifold 
		and 
	weak convexity. 
Take the proximal point method for example. 
The very definition 
	of
	the active manifold 
		guarantees 
	that the fixed point iteration $S(\cdot)$ 
		maps 
	an entire neighborhood $\cX$ around an strict saddle $\bar x$ 
		{\em into} 
	the active manifold $\cM$. 
Consequently, 
	for all $x\in \cX$, 
		the update $S(x)$ 
			can be realized 
		as a minimizer 
			of
			a smooth function 
				over 
				the active manifold:
\begin{equation}\label{eqn:reduction}
S(x)=\argmin_{y\in \cM} ~f(y)+\frac{1}{2\mu}\|y-x\|^2.
\end{equation}
Weak convexity, in turn, 
 	ensures 
that $S(\bar x)$ 
	satisfies 
a quadratic growth condition 
	for the  problem \eqref{eqn:reduction}, 
		which by classical perturbation theory 
			guarantees 
		that $S(\cdot)$ 
			is 
		$C^1$-smooth 
			on 
			a neighborhood of $\bar x$. 
It 
	only remains 
to argue that the negative curvature 
	of 
	the objective function 
		at 
		$\bar x$ 
			implies 
		that the Jacobian $\nabla S(\bar x)$ 
			has 
		at least one real eigenvalue greater than one. 
Though this computation 
	is 
straightforward 
	for 
	the proximal point method, 
	it 
		becomes 
	more interesting (and surprising)
		for 
		the 
			proximal gradient 
				and 
			proximal linear algorithms.

\bigskip
\textbf{Roadmap.} The outline of the paper is as follows. 
Section~\ref{sec:prelim}
	is
a self-contained presentation 
	of
	the necessary preliminaries 
		for 
		formalizing the ideas of the introduction.
Then in Sections~\ref{sec:prox_point},~\ref{sec:prox_grad}, and~\ref{sec:prox_linear}
	we 
		directly analyze 
	the iteration maps 
		for 
			the 
				proximal point, 
				proximal gradient, 
					and
				proximal linear 
			algorithms. Section~\ref{sec:makeitrain} establishes iterate convergence of the relaxed schemes \eqref{eqn:tmap} under the Kurdyka-{\L}ojasiewicz property.

\section{Preliminaries}\label{sec:prelim}
Throughout, we follow standard notation in convex and variational analysis, as set out for example in the monographs \cite{RW98,CLSW,con_ter,mord1}.
We consider a Euclidean space $\R^d$ endowed with an inner product $\langle\cdot,\cdot \rangle$ and the induced norm $\|x\|=\sqrt{\langle x, x\rangle}$. The unit sphere in $\R^d$ will be denoted by $\mathbb{S}^{d-1}$.
 For any function $f\colon\R^d\to\R\cup\{\infty\}$, the {\em domain} and {\em epigraph} are the sets
\begin{equation*}
\dom f=\{x\in \R^d: f(x)<\infty\},\qquad 
\epi f=\{(x,r)\in \R^d\times\R: r\geq f(x)\},
\end{equation*}
respectively.
The function $f$ is called {\em closed} if $\epi f$ is a closed set. 
 For any set $\cM\subset\R^d$, the indicator function $\delta_{\cM}$ evaluates to zero on $\cM$ and to $+\infty$ off it. For any function $f\colon\R^d\to \R\cup\{\infty\}$ and a set $\cM\subset\R^d$, we define the restriction $f_{\cM}:=f+\delta_{\cM}$. Throughout, the symbol $o(r)$ will denote any univariate function  satisfying $o(r)/r\to 0$ as $r\searrow 0$.

Consider a differentiable mapping $F(x)=(F_1(x),\ldots,F_m(x))$
	from 
	$\R^d$ to $\R^m$. 
Throughout, 
	the symbol $\nabla F(x)\in \R^{m\times d}$ 
		will denote 
	the Jacobian matrix,
		whose 
		$ij$'th entry 
			is given 
		by $\frac{d}{dx_j}F_i(x)$. 
Thus row $i$ 
	of 
	$\nabla F(x)$ 
		is the gradient 
			of 
			the coordinate function $F_i(x)$. 
In the particular case $m=1$, 
	we will treat $\nabla F(x)$ 
		either 
			as a column 
				or 
			as a row vector, 
		depending on context. 
For a $C^2$-smooth function $g\colon\R^d\times\R^n\to\R^m$, 
	we 
		partition 
	the Hessian as follows:
$$
\nabla^2 g(x, y) = \begin{bmatrix} \nabla_{xx}g(x,y) & \nabla_{xy}g(x,y) \\ \nabla_{yx}g(x,y) & \nabla_{yy} g(x,y)\end{bmatrix}
$$

\subsection{Subdifferentials and subderivatives}\label{sec:subdiff}
The following definition records 
		 the standard
			first 
				and 
			second-order 
		differential constructions, 
	which we will use in the paper.
After the definition, 
	we 
		will comment 
	on the role of each construction. For further details we refer the reader to \cite[Definitions 8.1, 8.3, 13.59]{RW98}.

\begin{definition}[Subdifferential and subderivatives]\label{defn:var_constr}
{\rm
Consider 
	a function $f\colon\R^d\to\R\cup\{\infty\}$ 
		and 
	a point $\bar{x}$ 
	with $f(\bar{x})$ finite. Then the {\em subdifferential} of $f$ at $\bar x$, denoted $\partial f(\bar x)$, consists of all vectors $v$ satisfying 
\begin{equation*} 
f(x)\geq f( \bar x)+\langle v,x- \bar x\rangle +o(\|x- \bar x\|)\qquad \textrm{ as }x\to  \bar x.	
\end{equation*}
The {\em subderivative} of $f$ at $\bar{x}$ in direction $\bar u\in\R^d$ is
$$df(\bar{x})(\bar u):=\lf_{\substack{t\searrow 0\\ u\to\bar{u}}} \frac{f(\bar{x}+tu)-f(\bar{x})}{t}.$$
The {\em critical cone of} $f$ {\em at} $\bar{x}$ {\em for} $\bar {v}\in\R^d$ is 
$$C_{f}(\bar{x},\bar {v}):=\{u\in \R^d: \langle \bar {v},u\rangle=df(\bar{x})(u)\}.$$
The {\em parabolic subderivative of} $f$ {\em at} $\bar{x}$ {\em for} $\bar u\in \dom df(\bar x)$ {\em with respect to} $\bar w$ is 
$$d^2f(\bar{x})(\bar {u}|\bar {w})=\lf_{\substack{t\searrow 0\\ w\to\bar{w}}} \frac{f(\bar{x}+t\bar {u}+\frac{1}{2}t^2w)-f(\bar{x})-df(\bar{x})(\bar {u})}{\frac{1}{2}t^2}.$$}
\end{definition}

We 
	now comment
	on 
	these definitions, in order. First, 
	a vector $v$ lies in the subdifferential $\partial f(\bar x)$ 
		precisely when  
			the affine function $x\mapsto f(\bar x)+\langle v,x-\bar x\rangle$ 
				minorizes 
			$f$ up to first-order near $\bar x$.
The definition reduces to familiar objects in classical circumstances. 
For example, 
	differentiability of $f$ at $\bar x$
		implies
	the set $\partial f(\bar x)$ is a singleton, 
		containing only the gradient $\nabla f(\bar x)$.
Convexity of $f$ too entails a simplification, wherein 
	 $\partial f(\bar x)$ 
		reduces 
	to the subdifferential 
		of 
		convex analysis. 

While the subdifferential 
	encodes 
the set 
	of 
	approximate affine minorants,
	the subderivative
		measures the maximal instantaneous rate 
		of 
		decrease 
			of 
			$f$ 
			in 
			direction $\bar{u}$. 
Like the subdifferential, the subderivative reduces to familiar objects in classical circumstances. 
For example, 
	if $f$ 
		is 
	locally Lipschitz at $\bar{x}$,
		then 
	one
		may set
	$u=\bar{u}$
		in 
		its defining expression. 
Simplifying further,
	if $f$ 
		is 
	differentiable 
		at $\bar x$, 
	we 
		recover 
	the directional derivative expression $df(\bar{x})(\bar{u})=\langle \nabla f(\bar x),\bar u\rangle$. 
Finally,  
	if $f$ 
		is 
	 convex, 
	then the subderivative 
		reduces 
	to the support function of the subdifferential
$$df(\bar{x})(\bar{u})=\sup\{\langle \bar{u},v\rangle: v\in\partial f(\bar x)\},$$
highlighting the dual roles of the subdifferential and subderivative constructions.

For smooth losses,
	necessary optimality conditions 
		entail 
	vanishing gradients, 
	while 
	sufficient optimality conditions 
		follow from
	second order growth properties of $f$. 
Similar characterizations 
	persist
in the nonsmooth setting.
In particular, the subderivative and the subdifferential 
		feature
	in first-order necessary optimality conditions, 
	where 
		the (dual) criticality condition $0\in\partial f(\bar x)$ 
			is equivalent to 
		the (primal) nonnegativity condition
\begin{equation}\label{eqn:first_order_crit_def}
df(\bar{x})(u)\geq 0\qquad \textrm{for all }u\in\R^d.
\end{equation}
A point $\bar x$ satisfying these first-order necessary conditions \eqref{eqn:first_order_crit_def} is thus called {\em critical} for $f$. 
Sufficient optimality conditions, on the other hand, 
	make use of second-order variations
		of
		$f$. 
Namely, 
	suppose that a point $\bar x$ 
		is 
	critical for $f$ 
		and 
	consider a direction $\bar u\in\R^d$. 
There are two possibilities to consider. 
On the one hand, 
	if $df(\bar{x})(\bar u)>0$, 
	then $f$ 
		must locally increase 
	in direction $\bar u$. 
On the other hand, 
	if $df(\bar{x})(\bar u)=0$, 
	then 
		we 
			must examine
	second order variations of $f$ to determine local optimality. 
Such directions of ambiguity for the subderivative
	make up 
the critical cone $C_f(\bar x, 0)$.
For these directions,
	we 
		must look to 
	the parabolic derivative $d^2f(\bar{x})(\bar{u}|\bar{w})$,
		a measurement 
			of 
		the second order variation 
			of 
			$f$ along a parabolic arc 
				with 
				tangent direction $\bar u$ 
					and 
				second-order variation $\bar w$. 
This construction too simplifies when $f$ is $C^2$ smooth at $\bar x$, 
	reducing to the familiar second order variation:
$$d^2f(\bar{x})(\bar{u}|\bar{w})=\langle\nabla^2 f(\bar{x})\bar{u},\bar{u}\rangle.$$
This relation 
	suggests 
second-order optimality conditions
	for
	nonsmooth problems.  
Although we will not appeal to such conditions directly in this work, we record them here for completeness.
If $\bar{x}$ 
		is 
		a local minimizer of $f$, 
		then 
			$df(\bar{x})(u)\geq 0 \textrm{ for all } u\in \R^n$,
				and 			moreover $\inf_{w\in \R^n} d^2f(\bar{x})(u|w)\geq 0$ for any nonzero $u\in C_f(\bar{x},0)$. 
	Complementing this necessary condition, 
		a large class of functions, those that are \emph{parabolically regular,}
			may also be endowed
		with a sufficient optimality condition. 
Namely, 
	if $df(\bar{x})(u)\geq 0 \textrm{ for all } u\in \R^n$ 
		and 
	  $\inf_{w\in \R^n} d^2f(\bar{x})(u|w)> 0$ for any nonzero $u\in C_f(\bar{x},0)$, 	
	 then $\bar{x}$ 
	  	is 
	a local minimizer 
		of 
		$f$. We refer the reader to 
\cite{Bon_Shap} or \cite[Theorem 13.66]{RW98} for details.

\subsection{Smooth minimization on a manifold}

The main results of this work 
	exploit 
local smooth features 
	of 
	nonsmooth optimization problems (c.f. Definition~\ref{defn:ident_man}). 
In the presence
	of 
	these features,
	the constructions 
		of 
		Definition~\ref{defn:var_constr} locally simplify.
Before moving
	to
	the general setting, 
we thus  
	interpret
	the various derivative constructions 
		in 
			the classical setting	of 
			minimizing a $C^2$-smooth function $f$ 
				on 
				a $C^2$-smooth manifold $\cM$.
To that end, we first recall the definition of a manifold.
\begin{definition}[Smooth manifold]
{\rm
A subset $\mathcal{M}\subset\R^n$ 
	is 
a $C^p$ {\em manifold of dimension $r$ around $\bar{x}\in\cM$}
	if there 
		is 
	an open neighborhood $U$ around $\bar{x}$ 
		and 
	a mapping $G$ from $\R^n$ to $\RR^{n-r}$ 
		such that following hold: 
		$G$ 
			is 
		$C^p$-smooth, 
		the derivative $\nabla G(\bar{x})$ has full rank,
			and equality holds:
		$$\mathcal{M}\cap U=\{x\in U: G(x)=0\}.$$ 
We call $G=0$ the {\em local defining equations} 
	for 
	$\cM$ around $\bar x$. 
The {\em tangent space} 
	to 
	$\mathcal{M}$ 
		at 
		$\bar{x}$ 
			is 
			$T_{\mathcal{M}}(\bar{x}):=\ker \nabla G(\bar{x})$
				and	
the {\em normal space} 
	to 
	$\mathcal{M}$ at $\bar{x}$ 
		is 
	 $N_{\mathcal{M}}(\bar{x}):=\range \nabla G(\bar{x})^*$.}
\end{definition}

Turning 
	to
	the classical setting, 
	consider the optimization problem
	\begin{equation}\label{eqn:smooth_min_over_smooth_man}
\min_{y\in\R^d}~ f(y)\quad\textrm{subject to } y\in \cM.
\end{equation}
Fix a point $\bar y\in\cM$
	and 
suppose that 
	both 
		the function $f$ is $C^2$-smooth around $\bar y$ and $\cM$ is a $C^2$-smooth manifold around $\bar y$.
Due to local smoothness, 
	the subdifferential
		admits
	the simple expression:
$$\partial f_{\cM}(\bar y)=\nabla f(\bar y)+N_{\mathcal{M}}(\bar{y}).$$
Recall that we use the shorthand $f_{\cM}:=f+\delta_{\cM}$.
From this expression, 
	we 
		see
	that a point $\bar y\in\cM$ 
		is 
	first-order critical for the problem \eqref{eqn:smooth_min_over_smooth_man} precisely when the inclusion holds:
\begin{equation}\label{eqn:first_order_nec}
0\in \nabla f(\bar y)+N_{\mathcal{M}}(\bar{y}).
\end{equation}
This inclusion can be equivalently stated in terms of the Lagrangian function. 
Namely, let $G=0$ 
	be 
the local defining equations 
	for 
	$\cM$ around $\bar y$
	and 
define the Lagrangian function 
$$\mathcal{L}(y,\lambda):=f(y)+\langle G(y),\lambda\rangle.$$
Then  \eqref{eqn:first_order_nec} 
	amounts to
existence 
	of 
	a (unique) multiplier vector $\bar \lambda\in\R^m$ satisfying $0=\nabla_y \cL(\bar y,\bar \lambda)$. 
Next, assuming $\bar y$ is critical,
	 second-order necessary conditions 
		read
\begin{equation}\label{eqn:nece_cond}
\langle \nabla^2_{yy} \cL(\bar y,\bar\lambda) u,u\rangle\geq 0 \quad\textrm{for all }u\in T_{\cM}(\bar y).
\end{equation}
Conversely, second-order sufficient conditions read
\begin{equation}\label{eqn:suff_cond}
 \langle \nabla^2_{yy} \cL(\bar y,\bar\lambda) u,u\rangle> 0 \quad\textrm{for all }0\neq u\in T_{\cM}(\bar y).
\end{equation}
It is well-known that the sufficient condition \eqref{eqn:suff_cond} implies more that just local minimality; namely, \eqref{eqn:suff_cond} holds if and only if there exists $\alpha>0$ such that
\begin{equation}\label{eqn:quadr_growth}
f(y)-f(\bar y)\geq \alpha \|y-\bar y\|^2, \qquad \textrm{for all }y\in \cM\textrm{ near }\bar y.
\end{equation}
Any point $\bar y$ satisfying \eqref{eqn:quadr_growth} is called a {\em strong local minimizer} of $f$ on $\cM$.

The Lagrangian conditions
	\eqref{eqn:nece_cond} 
		and 
	\eqref{eqn:suff_cond} 
	may be succinctly expressed  
		through 
		parabolic subderivatives 
			of 
			$f_{\cM}(y)$, 
	yielding a form
		independent
		of
		the choice 
			of 
		local defining equations $G=0$.
In particular,
	a quick computation 
		shows
	that for any $u\in T_{\cM}(\bar y)$, 
	the function $w\mapsto d^2  f(\bar{y})(u|w)$ 
		is 
	constant 
		on 
		its domain.\footnote{The domain of $d^2 f_{\cM}(\bar{y})(u|\cdot)$ consists of $w$ satisfying $(\langle \nabla^2 G_1(\bar y)u,u\rangle,\ldots, \langle \nabla^2 G_{n-r}(\bar y)u,u\rangle)=-\nabla G(\bar y)w$, where $G_i$ are the coordinate functions of $G$.} 
Dropping the dependence on $w$, 
	the equation then holds:
$$d^2 f_{\cM}(\bar{y})(u)=\langle \nabla^2_{yy} \cL(\bar y,\bar \lambda)u,u\rangle\qquad \textrm{for all }u\in T_{\cM}(\bar y).$$ 

The use of \eqref{eqn:suff_cond} 
 goes far beyond verifying local optimality; indeed, this condition plays
		a fundamental role
			in 
			certifying solution stability under small perturbations.
To illustrate, 
	consider 
	the value function 
		of 
		the parametric family
\begin{gather}\label{eqn:paramet_prob}
\varphi(x)=\inf_y~ \{f(x,y): y\in \cM\}, \tag{$\cP_{ x}$}
\end{gather}
where $f$ is $C^2$-smooth and $\cM\subset\R^d$ is a closed set. 
Let $\bar y$ 
	be 
	a minimizer 
		of 
		$\cP_{\bar x}$
			for 
			a fixed parameter $\bar x$, 
	and 
suppose that $\cM$
	 is a 
	 $C^2$-smooth manifold around $\bar y$. 
Let $G=0$ 
	be 
	the local defining equations 
		for 
		$\cM$ around $\bar y$  
			and 
define the parametric Lagrangian function 
$$\mathcal{L}(x,y,\lambda)=f(x,y)+\langle G(y),\lambda\rangle.$$
Since $\bar y$ solves $\cP_{\bar x}$, 
	there 
		is 
	a multiplier vector $\bar \lambda$ satisfying $0=\nabla_y \cL(\bar x,\bar y,\bar \lambda)$.

The following perturbation result 
	will form 
the core of our arguments. 
In short:
	both 
		the value function $\varphi(x)$
			and
		the minimizer of $\cP_x$ 
			vary 
		smoothly with $x$,
	provided two mild conditions hold (level-boundedness and quadratic growth). 
Moreover, the derivatives of both the value function and the solution maps can be computed explicitly. 
For details and a much more general perturbation result, see \cite[Theorem 3.1]{Shapiro1985}.

\begin{thm}[Perturbation analysis]\label{thm:stab}
	Suppose that the following two properties hold.
	\begin{enumerate}
		\item {\bf (Level-boundedness)} \label{thm:stab:itemlevel} There exists a number $\gamma>\varphi(\bar x)$ and a neighborhood $\cX$ of $\bar x$ such that 
		the set
		$$\bigcup_{x\in \cX}\{y\in\cM: f(x,y)\leq \gamma\}\qquad \textrm{is bounded}.$$
		\item {\bf (Quadratic growth)}\label{thm:stab:itemquad} The point $\bar y$ is a strong local minimizer and a unique global minimizer of $\mathcal{P}_{\bar x}$.
	\end{enumerate}
	Define the partial Hessian matrices 
	$$H_{xx}=\nabla^2_{xx}\cL(\bar x,\bar y,\bar\lambda),\qquad H_{xy}=\nabla^2_{xy}\cL(\bar x,\bar y,\bar \lambda),\qquad H_{yy}=\nabla^2_{yy}\cL(\bar x,\bar y,\bar \lambda),$$
	and the quantities
	\begin{align*}
	\eta(h)&=\min_{v\in T_{\cM}(\bar y)}~\langle H_{xx} h,h\rangle+2\langle H_{xy} v,h\rangle+\langle H_{yy} v,v\rangle,\\
	\Phi(h)&=\argmin_{v\in T_{\cM}(\bar y)}~\langle H_{xx} h,h\rangle+2\langle H_{xy} v,h\rangle+\langle H_{yy} v,v\rangle.
	\end{align*}
	Then for every $x$ near $\bar x$, the problem $\cP_{x}$ admits a unique solution $y(x)$, which varies $C^1$-smoothly and admits the first-order expansion $$\bar y(\bar x+h)=\bar y+\Phi(h)+o(\|h\|)\qquad \textrm{as }h\to 0.$$ 
	Moreover, the  function $\varphi$ is $C^2$-smooth around $\bar x$ and admits the second order expansion
	$$\varphi(\bar x+h)=\varphi(\bar x)+\langle \nabla_x f(\bar x,\bar y),h\rangle+\tfrac{1}{2}\eta(h)+o(\|h\|^2) \qquad \textrm{as }h\to 0.$$
\end{thm}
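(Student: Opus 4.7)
The plan is to reduce the manifold-constrained problem to an equality-constrained one via the local defining equations $G=0$ for $\cM$ around $\bar y$, and then apply the classical implicit function theorem to the KKT system. The quadratic growth assumption is equivalent, through the Lagrangian formulation recalled earlier in this section, to the second-order sufficient condition $\langle H_{yy}u,u\rangle>0$ for all nonzero $u\in T_{\cM}(\bar y)=\Ker\nabla G(\bar y)$. Combined with the surjectivity of $\nabla G(\bar y)$, a standard linear algebra argument shows that the bordered KKT matrix
\[
J=\begin{pmatrix} H_{yy} & \nabla G(\bar y)^{\!*} \\ \nabla G(\bar y) & 0\end{pmatrix}
\]
is nonsingular. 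Applying the implicit function theorem to the system $\nabla_y\cL(x,y,\lambda)=0$, $G(y)=0$ at $(\bar x,\bar y,\bar\lambda)$ therefore produces $C^1$ maps $x\mapsto y(x)$ and $x\mapsto \lambda(x)$, defined on a neighborhood of $\bar x$, that continue $(\bar y,\bar\lambda)$ and satisfy the KKT system identically.

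The most delicate step is promoting the implicit-function-theorem solution $y(x)$ from a mere critical point to the unique global minimizer of $\cP_x$. I would argue by contradiction: if global minimizers $y_k$ of $\cP_{x_k}$ (for some $x_k\to \bar x$) failed to coincide with $y(x_k)$ for large $k$, then level-boundedness would confine $\{y_k\}$ to a compact subset of $\cM$; continuity of $f$ together with the fact that $\bar y$ is the unique global minimizer of $\cP_{\bar x}$ (a consequence of quadratic growth and level-boundedness) would force $y_k\to\bar y$. Then, for $k$ large, both $y_k$ and $y(x_k)$ would lie in a neighborhood of $\bar y$ on which quadratic growth isolates the unique local minimizer supplied by the implicit function theorem, yielding $y_k=y(x_k)$, a contradiction.

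With $y(x)$ identified as the solution map, the expansions follow by differentiating the implicit relations. Writing $v=\nabla y(\bar x)h$ and $\mu=\nabla \lambda(\bar x)h$, differentiation of the KKT system yields
\[
H_{yy}v+H_{yx}h+\nabla G(\bar y)^{\!*}\mu=0,\qquad \nabla G(\bar y)v=0.
\]
The second equation forces $v\in T_{\cM}(\bar y)$, while pairing the first against any $w\in T_{\cM}(\bar y)$ eliminates $\mu$ and characterizes $v$ as the unique critical point of the strictly convex quadratic $w\mapsto \langle H_{xx}h,h\rangle+2\langle H_{xy}w,h\rangle+\langle H_{yy}w,w\rangle$ on $T_{\cM}(\bar y)$; that is, $v=\Phi(h)$. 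For the value function, the identity $G(y(x))\equiv 0$ gives $\varphi(x)=f(x,y(x))=\cL(x,y(x),\lambda(x))$. Differentiating once and using $\nabla_y\cL=0$ recovers the envelope formula $\nabla\varphi(\bar x)=\nabla_x f(\bar x,\bar y)$; differentiating a second time, every term involving $\nabla y(\bar x)$ enters only through the tangential increment $\Phi(h)$, producing $\langle\nabla^2\varphi(\bar x)h,h\rangle=\langle H_{xx}h,h\rangle+2\langle H_{xy}\Phi(h),h\rangle+\langle H_{yy}\Phi(h),\Phi(h)\rangle=\eta(h)$. The main obstacle, as flagged above, is the global-to-local matching argument in the second paragraph; everything else is a mechanical consequence of the implicit function theorem and the chain rule.
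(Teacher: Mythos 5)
Your proposal is correct in substance, but note that the paper does not prove Theorem~\ref{thm:stab} at all: it is invoked as a classical perturbation result, with a citation to Shapiro's work ({\cite[Theorem 3.1]{Shapiro1985}}), of which your argument is essentially the standard proof. What you supply, then, is a self-contained derivation: reduce to equality constraints via the local defining equations, use the equivalence (recalled in the paper just before the theorem) between quadratic growth and positive definiteness of $H_{yy}$ on $T_{\cM}(\bar y)$, invoke nonsingularity of the bordered KKT matrix and the implicit function theorem, and then differentiate the stationarity system and the envelope identity $\varphi(x)=f(x,y(x))$. This is a legitimate and more elementary route than citing the general sensitivity theory; what the citation buys the paper is brevity and a statement valid under weaker hypotheses. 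Two small points deserve tightening. First, in the global-to-local matching step, the phrase ``quadratic growth isolates the unique local minimizer supplied by the implicit function theorem'' is not quite the right mechanism: quadratic growth of $\cP_{\bar x}$ at $\bar y$ does not by itself rule out other critical points of the perturbed problems $\cP_{x_k}$ near $\bar y$. What closes the argument is the local uniqueness of stationary \emph{pairs} $(y,\lambda)$ furnished by the implicit function theorem, combined with the fact that (by full rank of $\nabla G(\bar y)$) the multiplier associated with any critical point $y_k$ of $\cP_{x_k}$ is uniquely determined and depends continuously on $(x_k,y_k)$, so that once $y_k\to\bar y$ the pairs $(y_k,\lambda_k)$ enter the IFT neighborhood and must coincide with $(y(x_k),\lambda(x_k))$. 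Second, in the value-function expansion, differentiating $\nabla\varphi(x)=\nabla_x f(x,y(x))$ directly gives $\langle\nabla^2\varphi(\bar x)h,h\rangle=\langle H_{xx}h,h\rangle+\langle H_{xy}\Phi(h),h\rangle$, and this agrees with $\eta(h)$ only after using the optimality condition for $\Phi(h)$, which yields $\langle H_{yy}\Phi(h),\Phi(h)\rangle=-\langle H_{xy}\Phi(h),h\rangle$; your write-up asserts the identity without exhibiting this cancellation, so it is worth making explicit.
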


The two assumptions
	of 
	the theorem 
		play 
	different roles.  
The level-boundedness property 
	ensures 
that the solutions 
	of 
	the perturbed problems $\cP_x$ 
		lie 
	in a compact set around $\bar y$. 
The quadratic growth property in turn 
	ensures
smoothness 
	of both
	the solution map 
		and 
	the value function. 
In what follows, 
	we 
		will apply 
	this result several times. 
Both conditions will follow 
in all cases
	from 
	the next simple lemma.

\begin{lem}[Sufficient conditions for level boundedness]\label{lem:checkcond}
	Consider a closed function $\varphi\colon\R^d\times \R^n\to \R\cup\{\infty\}$ 
		and 
	fix a point $\bar x\in\R^d$. 
	Suppose 
		there exists $\alpha>0$ 
			such that 
			for all $x$ near $\bar x$, 
		the function $\varphi(x,\cdot)$ 
			is 
		$\alpha$-strongly convex 
			and 
		its minimizer $y(x)$ varies continuously. 
Then $y(x)$ 
	is 
a strong global minimizer 
	of 
	$\varphi(x,\cdot)$ 
	for all $x$ near $\bar x$. 
Moreover, 
	there 
		exists 
	a neighborhood $\cX$ 
		of 
		$\bar x$ 
			such that for any real $\gamma> \varphi(\bar x,y(\bar x))$, 
			the set
	$$\bigcup_{x\in \cX}\{y\in\R^n: \varphi(x,y)\leq \gamma\}\qquad \textrm{is bounded}.$$
\end{lem}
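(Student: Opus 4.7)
The first conclusion is a classical consequence of strong convexity. My plan is to observe that, for each $x$ near $\bar x$, the minimizer $y(x)$ satisfies the optimality condition $0\in\partial_y\varphi(x,y(x))$, and feed this into the standard $\alpha$-strong convexity inequality for $\varphi(x,\cdot)$. This yields the quadratic growth bound
$$\varphi(x,y)\geq \varphi(x,y(x))+\tfrac{\alpha}{2}\|y-y(x)\|^2 \qquad \text{for all }y\in\R^n,$$
which is exactly the statement that $y(x)$ is a strong global minimizer of $\varphi(x,\cdot)$.

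For the boundedness claim, fix $\gamma>\varphi(\bar x,y(\bar x))$, and let $(x,y)$ be any pair with $x$ in a neighborhood $\cX$ of $\bar x$ (to be chosen) and $\varphi(x,y)\leq\gamma$. Rearranging the growth inequality yields
$$\|y-y(x)\|^2\leq \tfrac{2}{\alpha}\bigl(\gamma-\varphi(x,y(x))\bigr),$$
so it suffices to exhibit a single neighborhood $\cX$ on which both $\|y(x)\|$ and $\gamma-\varphi(x,y(x))$ are uniformly bounded above. Continuity of $y(\cdot)$ immediately handles $\|y(x)\|$ on any compact neighborhood. For the second quantity, I would use that closedness of $\varphi$ is joint lower semicontinuity, and compose with the continuous selection $x\mapsto(x,y(x))$ to see that the univariate map $x\mapsto\varphi(x,y(x))$ is lower semicontinuous. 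Any lower semicontinuous function is bounded below on a compact set, so taking $\cX$ to be a small closed ball around $\bar x$ produces a finite lower bound on $\varphi(x,y(x))$ over $\cX$, hence a finite upper bound on $\gamma-\varphi(x,y(x))$. The triangle inequality $\|y\|\leq\|y(x)\|+\|y-y(x)\|$ then yields a uniform bound on $\|y\|$, completing the proof.

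The argument is largely routine: the only point that requires any care is ruling out that $\varphi(x,y(x))$ drifts to $-\infty$ as $x\to\bar x$, and this is precisely what the lower semicontinuity observation prevents. I do not anticipate any serious obstacle.
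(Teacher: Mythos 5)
Your proposal is correct and follows essentially the same route as the paper: quadratic growth from $\alpha$-strong convexity at the minimizer, then a uniform bound on $\|y(x)\|$ from continuity of $y(\cdot)$ and a uniform lower bound on $\varphi(x,y(x))$ from closedness (lower semicontinuity) of $\varphi$ composed with $x\mapsto(x,y(x))$, finished by the triangle inequality; the paper packages the same ingredients as an explicit $\delta$-shrinking of the neighborhood rather than a compactness argument. Note only that, although you fix $\gamma$ at the outset, your neighborhood $\cX$ is constructed independently of $\gamma$, which is exactly what the quantifier order in the statement requires.
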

\begin{proof}
Strong convexity 
	ensures 
there 
	is 
a neighborhood $\cX$ 
	of 
	$\bar x$ such that 
		for any $x\in\cX$, 
the estimate holds: 
	\begin{equation}\label{eqn:str_conv_est}
	\varphi(x,y(x))+\frac{\alpha}{2}\|y-y(x)\|^2\leq \varphi(x,y) \qquad \forall y\in \R^n,
	\end{equation}
showing $y(x)$ 
	is 
	a strong global minimizer of $\varphi(x,\cdot)$. 
Shrinking $\cX$ if necessary
	we may assume 
that $y(\cdot)$ also varies continuously on $\cX$. 
Choose any $\delta>0$.
Then by shrinking $\cX$ again 
	and 
	by 
		leveraging 
	both 
		closedness of $\varphi$ 
			and
		continuity of $y$,
	we 
		may ensure
	that
	\begin{equation}\label{eqn:lsc}
	\|y(x)-y(\bar x)\|\leq \delta \qquad \textrm{and}\qquad \varphi(x,y(x))\geq \varphi(\bar x,y(\bar x))-\delta\qquad \textrm{for all }x\in \cX.
	\end{equation}
	The proof will now follow quickly from the bound \eqref{eqn:lsc}. Indeed, 	consider 
				any points 
					$x\in \cX$ 
						and 
					$y\in\R^d$ 
				satisfying $\varphi(x,y)\leq \gamma$. Then \eqref{eqn:str_conv_est} yields 
	$$\|y-y(x)\|\leq \sqrt{\frac{2(\gamma-\varphi(x,y(x)))}{\alpha}}.$$
	Applying \eqref{eqn:lsc} then gives the uniform bound
	$$\|y-y(\bar x)\|\leq \|y(x)-y(\bar x)\|+\sqrt{\frac{2(\gamma-\varphi(x,y(x)))}{\alpha}}\leq \delta+\sqrt{\frac{2(\gamma+\delta-\varphi(\bar x,y(\bar x)))}{\alpha}},$$
	completing the proof.
\end{proof}

\subsection{Weak convexity and the Moreau envelope}\label{sec:moreau}

In general, 
	the little-$o$ error term in the definition of $\partial f(\bar x)$ (Definition~\ref{defn:var_constr})
		may depend 
both 
	on the base point $\bar x$ 
		and 
	on the subgradient $v$. 
In this work, 
	we
		focus on
	a particular class of functions for which the error in approximation is uniform. 
Namely, 
	we 
		focus on
	the class of {\em $\rho$-weakly convex} functions $f\colon\R^d\to\R\cup\{\infty\}$, meaning those for which 
			the assignment $x\mapsto f(x)+\frac{\rho}{2}\|x\|^2$ 
				defines
			a convex function. Subgradients of a $\rho$-weakly convex function $f$ automatically yield a uniform lower bound:
				\begin{equation}\label{eqn:subgrad_ineq}
		f(y)\geq f(x)+\langle v,y-x\rangle-\frac{\rho}{2} \|y-x\|^2,\qquad\forall x,y\in\R^d,~ v\in \partial f(x).
		\end{equation}

A useful feature 
	of 
	weakly convex functions 
		is 
	that they 
		admit 
	a smooth approximation that 
		preserves 
	critical points. 
Setting the notation, 
	fix 
		a $\rho$-weakly convex function $f\colon\R^d\to\R\cup\{\infty\}$ 
		and 
		a parameter $\mu<\rho^{-1}$. 
Define 
	the {\em Moreau envelope} 
	and
	the {\em proximal point} map, respectively:
\begin{align*}
f_{\mu}(x)&=\inf_{y\in\R^d}~ \left\{f(y)+\frac{1}{2\mu}\|y-x\|^2\right\},\\
\prox_{\mu f}(x)&=\argmin_{y\in\R^d}~ \left\{f(y)+\frac{1}{2\mu}\|y-x\|^2\right\}.
\end{align*}
We 
	will use 
		a few basic properties 
			of 
			these two constructions, summarized below.

\begin{lem}[Moreau envelope and the proximal point map]\label{lem_prop_moreau}
Consider a $\rho$-weakly convex function $f\colon\R^d\to\R\cup\{\infty\}$ and  fix a parameter $\mu<\rho^{-1}$. Then the following are true.
\begin{enumerate}
\item\label{it1:moreau} The envelope $f_{\mu}$ 
	is 
	$C^1$-smooth with its gradient given by 
\begin{equation}\label{eqn:grad_moreau}
\nabla f_{\mu}(x)=\mu^{-1}(x-\prox_{\mu f}(x)).
\end{equation}
\item\label{it2:moreau} The envelope $f_{\mu}(\cdot)$ is $\mu^{-1}$-smooth and $\tfrac{\rho}{1-\mu\rho}$-weakly convex meaning:
\begin{equation}\label{eqn:smoothness_moreau}
-\frac{\rho}{2(1-\mu\rho)}\|x'-x\|^2\leq f_{\mu}(x')- f_{\mu}(x)-\langle \nabla f_{\mu}(x),x'-x\rangle\leq \frac{1}{2\mu}\|x'-x\|^2,
\end{equation}
for all $x,x'\in \R^d$.
\item \label{it3:moreau}
The proximal map $\prox_{\mu f}(\cdot)$ is $\frac{1}{1-\mu\rho}$-Lipschitz continuous and the gradient map $\nabla f_{\mu}$ is Lipschitz continuous with constant $\max\{\mu^{-1},\frac{\rho}{1-\mu\rho}\}$.
\item \label{it4:moreau} The critical points 
	of 
		$f$ 
			and 
		$f_{\mu}$ coincide. 
In particular, they are exactly the fixed points of the proximal map $\prox_{\mu f}$. 
\end{enumerate}
\end{lem}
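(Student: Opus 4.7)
The plan is to leverage a conjugate-duality representation of the Moreau envelope together with subgradient hypomonotonicity of $\rho$-weakly convex functions. The key identity is
\[
f_\mu(x) = \tfrac{1}{2\mu}\|x\|^2 - h^*(x/\mu), \qquad h(y):=f(y) + \tfrac{1}{2\mu}\|y\|^2,
\]
where, thanks to $\mu<\rho^{-1}$, $h$ is $(\mu^{-1}-\rho)$-strongly convex. Consequently $h^*$ is convex and its gradient is Lipschitz with constant $\mu/(1-\mu\rho)$; moreover $\nabla h^*(x/\mu) = \prox_{\mu f}(x)$ by inspection of the defining argmin. Differentiating the displayed identity establishes item (\ref{it1:moreau}). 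Item (\ref{it4:moreau}) then reduces to noting that $\nabla f_\mu(\bar x)=0 \iff \bar x=\prox_{\mu f}(\bar x)$, and the latter is equivalent by strong convexity and first-order optimality to $0\in\partial f(\bar x)$.

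For item (\ref{it2:moreau}), I would prove the upper bound by comparing $f_\mu(x')$ and $f_\mu(x)$ through the suboptimal point $T(x):=\prox_{\mu f}(x)$:
\[
f_\mu(x') - f_\mu(x) \leq \tfrac{1}{2\mu}\bigl(\|T(x)-x'\|^2-\|T(x)-x\|^2\bigr) = \langle \nabla f_\mu(x),x'-x\rangle + \tfrac{1}{2\mu}\|x'-x\|^2,
\]
where the equality comes from expanding the squared norms and applying \eqref{eqn:grad_moreau}. For the lower bound, the conjugate representation exhibits $f_\mu$ as the sum of $\frac{1}{2\mu}\|\cdot\|^2$ and $-h^*(\cdot/\mu)$; smoothness of $h^*$ with constant $\mu/(1-\mu\rho)$ makes the latter $1/(\mu(1-\mu\rho))$-weakly convex, so after cancellation $f_\mu$ is $\rho/(1-\mu\rho)$-weakly convex, yielding the desired lower bound.

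The hardest step is the sharp Lipschitz bound in item (\ref{it3:moreau}). The prox estimate itself is standard: pairing $v:=\mu^{-1}(x-T(x))\in\partial f(T(x))$ and $v':=\mu^{-1}(x'-T(x'))\in\partial f(T(x'))$ with $T(x')-T(x)$ in \eqref{eqn:subgrad_ineq} yields $\|T(x)-T(x')\|\leq(1-\mu\rho)^{-1}\|x-x'\|$. For $\nabla f_\mu=\mu^{-1}(I-T)$ a triangle inequality is too lossy. Instead I would substitute $T(x')-T(x)=(x'-x)-\mu(v'-v)$ into the hypomonotonicity inequality and rearrange to
\[
\mu(1-\mu\rho)\|v'-v\|^2 \leq (1-2\mu\rho)\langle v'-v,x'-x\rangle + \rho\|x'-x\|^2.
\]
Applying Cauchy--Schwarz with sign chosen according to whether $2\mu\rho\leq 1$ or $2\mu\rho>1$ reduces matters to a quadratic inequality in $\|v'-v\|/\|x'-x\|$ whose discriminant fortuitously equals $1$; reading off the positive root gives $\mu^{-1}$ in the first regime and $\rho/(1-\mu\rho)$ in the second, matching the $\max$ in the statement. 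This discriminant cancellation is the main subtlety of the argument; every other step is a rearrangement of conjugate calculus or first-order optimality.
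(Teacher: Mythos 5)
Your proof is correct, but it takes a genuinely different route from the paper's. The paper's argument is largely citation-based: item \ref{it1:moreau} and the Lipschitz continuity of $\prox_{\mu f}$ are quoted from the prox-regularity literature, the lower bound in \eqref{eqn:smoothness_moreau} is quoted as well, and the gradient Lipschitz constant $\max\{\mu^{-1},\tfrac{\rho}{1-\mu\rho}\}$ is then deduced from the two-sided estimate \eqref{eqn:smoothness_moreau} together with Alexandrov's theorem; only the upper bound in \eqref{eqn:smoothness_moreau} (the same one-line comparison through $\prox_{\mu f}(x)$ that you use) and item \ref{it4:moreau} are argued directly. You instead give a self-contained argument: the conjugate identity $f_{\mu}(x)=\tfrac{1}{2\mu}\|x\|^2-h^*(x/\mu)$ with $h=f+\tfrac{1}{2\mu}\|\cdot\|^2$ being $(\mu^{-1}-\rho)$-strongly convex yields item \ref{it1:moreau} and, after the cancellation you describe, exactly the weak-convexity modulus $\tfrac{1}{\mu(1-\mu\rho)}-\tfrac{1}{\mu}=\tfrac{\rho}{1-\mu\rho}$ for the lower bound; and item \ref{it3:moreau} follows from hypomonotonicity of $\partial f$. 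I checked the key step: your rearranged inequality $\mu(1-\mu\rho)\|v'-v\|^2\le(1-2\mu\rho)\langle v'-v,x'-x\rangle+\rho\|x'-x\|^2$ is correct, and in both sign regimes the resulting quadratic in $t=\|v'-v\|/\|x'-x\|$ has discriminant $(1-2\mu\rho)^2+4\mu\rho(1-\mu\rho)=1$, giving the positive roots $\mu^{-1}$ (when $2\mu\rho\le 1$) and $\tfrac{\rho}{1-\mu\rho}$ (when $2\mu\rho>1$), which is exactly the stated maximum since $\mu^{-1}\ge\tfrac{\rho}{1-\mu\rho}$ precisely when $2\mu\rho\le 1$. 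What each approach buys: the paper's route is shorter on the page but leans on external results (prox-regularity theorems and Alexandrov's a.e.\ second-order differentiability), whereas yours is elementary and obtains the sharp constant directly. One caveat: the identification $\nabla h^*(x/\mu)=\prox_{\mu f}(x)$ (attainment and uniqueness of the argmax) tacitly uses closedness of $f$; this is the same standing assumption under which the paper invokes the lemma, so it is not a gap, but it is worth stating explicitly.
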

\begin{proof}
Claim~\ref{it1:moreau} follows for example from \cite[Theorem 4.4]{prox_reg}. The left-hand-side of \eqref{eqn:smoothness_moreau} is proved in \cite[Theorem 5.2]{prox_reg}. To see the right-hand-side, observe
\begin{align*}
f_{\mu}(x')&\leq f(\prox_{\mu f}(x))+\frac{1}{2\mu}\|\prox_{\mu f}(x)-x'\|^2\\
&=f_{\mu}(x)+\frac{1}{2\mu}\left(\|\prox_{\mu f}(x)-x'\|^2-\|x-\prox_{\mu f}(x)\|^2\right)\\
&=f_{\mu}(x)+\langle \mu^{-1}(x-\prox_{\mu f}(x)),x'-x\rangle+\frac{1}{2\mu}\|x-x'\|^2.
\end{align*}
Thus Claim~\ref{it2:moreau} holds.
The result \cite[Theorem 4.4]{prox_reg} shows that $\prox_{\mu f}(\cdot)$ is Lipschitz continuous with constant $\frac{1}{1-\mu\rho}$. Lipschitz continuity of 
 $\nabla f_{\mu}(\cdot)$ with constant $\max\{\mu^{-1},\frac{\rho}{1-\mu\rho}\}$ follows from \eqref{eqn:smoothness_moreau} and Alexandrov's theorem \cite[Theorem 13.51]{RW98}. Thus claim~\ref{it3:moreau} holds. Claim \ref{it4:moreau} is  immediate from~\eqref{eqn:grad_moreau} and the observation that the function $y\mapsto f(y)+\frac{1}{2\mu}\|y-x\|^2$ is strongly convex for any $x$.
\end{proof}

\subsection{Active manifolds} 
The nonsmooth behavior 
	of
	sets 
		and 
	functions
	arising
in applications
	is
	typically far from pathological 
		and 
	instead manifests 
	in highly structured ways. 
Formalizing this perspective 
	we 
		will assume
	that nonsmoothness, in a certain localized sense, only occurs along an ``active manifold."
This notion, 
	introduced in \cite{lewis_active} under the name of partial smoothness
		and 
	rooted in the earlier works \cite{Wright,Al-Khayyal-Kyparisis91,Burke-More88,Burke90,Calamai-More87,Dunn87,Ferris91,Flam92},
	extends
the concept 
	of 
	\emph{active sets} 
		in 
		nonlinear programming 
	far beyond the classical setting.  
In this work,
	we 
		will take
	the related perspective developed in \cite{ident}, since it will be most expedient for our purpose.

Before giving the formal definition, 
	we 
		provide
	some intuition.  
Taking a geometric view, 
	we 
		will assume 
	that each critical point of a function $f$
		lies    
	on a smooth manifold $\cM$, and  
	that the objective 
		varies 
	smoothly along the manifold, 
		but
	sharply off of it.
For example consider Figure~\ref{subsec:partlysmooth_man}:
	there the function $f(x,y)=|x|-y^2$ 
		admits 
	the active manifold $\cM=\{0\}\times \R$ around its unique critical point (the origin).
From an algorithmic point of view,
	active manifolds 
		are
	the sets that typical algorithms 
		(e.g. proximal point, proximal gradient \cite{hare_alg}, 
			and 
		dual averaging \cite{lee2012manifold}) 
		identify 
	in finite time. 
Active manifolds also 
	play 
a central role 
	for 
	sensitively analysis, 
	providing a path 
		to 
		reduce such questions 
		to the smooth setting.
In particular,
	reasonable conditions 
		guarantee 
	that the active manifold 
		is 
	smoothly traced out 
		by 
		critical points 
			of 
			slight perturbations 
				of 
				the problem. 
We are now ready to state the formal definition.\footnote{What we call an {\em active manifold} here is called an {\em identifiable manifold} in \cite{ident}--the reference we most closely follow. The term active is more evocative in the context of the current work.}
\begin{definition}[Active manifold]\label{defn:ident_man}{\rm 
Consider a closed weakly convex function $f\colon\R^d\to\R\cup\{\infty\}$ 
	and 
fix a set $\mathcal{M} \subseteq \RR^d$ 
	containing 
	a critical point $\bar x$ 
		of 
		$f$. 
Then $\mathcal{M}$ 
	is called 
{\em an active} $C^p${\em-manifold around} $\bar x$  
	if 
		there exist 
	a neighborhood $U$ around $\bar x$ satisfying the following.
\begin{itemize}
\item {\bf (smoothness)}  
The set $\mathcal{M}\cap U$ 
	is 
a $C^p$-smooth manifold 
	and 
the restriction 
	of 
	$f$ 
	to $\mathcal{M}\cap U$ 
	is $C^p$-smooth.
\item {\bf (sharpness)} 
The lower bound holds:
$$\inf \{\|v\|: v\in \partial f(x),~x\in U\setminus \cM\}>0.$$
\end{itemize}}
\end{definition}

If $f$ 
	admits 
an active manifold 
	around 
	a critical point $\bar x$, 
	then it 
		must be 
	locally unique: 
	any two active manifolds at $\bar x$
		must coincide 
	on a neighborhood of $\bar x$ \cite[Proposition 2.4, Proposition 10.10]{darxiv}.\footnote{Note that due to the convention $\inf_{\emptyset}=+\infty$, the entire space $\mathcal{M}=\R^d$ is the active manifold
for a globally  $C^p$-smooth function $f$ around any of its critical points.} Moreover, the critical cone $C_f(\bar x,0)$ coincides with the tangent space $T_{\cM}(\bar x)$ \cite[Proposition 10.8]{darxiv}. With the definition of the active manifold in hand, 
	we 
		can now introduce 
	the  strict saddle property for nonsmooth functions.\footnote{Better terminology would be the terms {\em active strict saddle} and the {\em active strict saddle property}. To streamline the notation, we omit the word active, as it should be clearly understood from context.}

\begin{definition}[Strict saddles]\label{defn:strict_saddle_conv}{\rm 
Consider a weakly convex function $f\colon\R^d\to\R\cup\{\infty\}$. Then we say that a critical point $\bar x$ is a {\em strict saddle} of $f$ if there exists a $C^2$-active manifold $\cM$ 
	of $f$ at $\bar x$
		and 
the inequality $d^2 f_{\cM}(\bar x)(u)<0$ holds for some vector $u\in T_{\cM}(\bar x)$. If every critical point of $f$ is either a local minimizer or a strict saddle, then we say that $f$ satisfies the {\em  strict saddle property}.} 
\end{definition}

Looking at Figure~\ref{subsec:partlysmooth_man}, 
	we 
		see 
that the function $f(x,y)=|x|-y^2$ indeed has the strict saddle property: 
the restriction 
	of 
	$f$ to the axis $\cM=\{0\}\times \R$, 
	namely $f_{\cM}(0,t)=-t^2$, 
		certainly has 
	directions of negative curvature.  
Figure~\ref{subsec:partlysmooth_grad} 
	depicts 
the subgradient flow generated by this function. 
Notice that the set of initial conditions attracted to the origin has measure zero. 
This observation 
	suggests 
that typical algorithms 
	are 
also unlikely to stall at the strict saddle point, an observation made precise by the forthcoming results.

The curvature condition in the definition of the strict saddle 
	pertains only to negative curvature of the restriction of $f$ to $\cM$.
One 
	may instead ask 
whether existence 
	of 
	directions 
		of 
		negative curvature for $f$ alone suffice. 
The answer turns out to be yes.

\begin{thm}[{\cite[Corollary 4.15]{drusvyatskiy2016generic}}]
Consider a closed weakly convex function $f\colon\R^d\to\R\cup\{\infty\}$ that admits a $C^3$-active manifold $\cM$ around a critical point $\bar x$. Then it  holds:
$$d^2 f(\bar x)( u\mid w)\geq d^2 f_{\mathcal{M}}(\bar x)( u) \qquad \textrm{ for all } u\in C_{f}(\bar{x},0), w\in \R^d.$$
\end{thm}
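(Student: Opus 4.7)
The plan is to bound $d^2 f(\bar x)(u\mid w)$ from below by projecting a parabolic arc for $f$ onto $\cM$, then comparing the two arcs using weak convexity together with a vanishing tangential subgradient along $\cM$.

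By \cite[Proposition 10.8]{darxiv}, $C_f(\bar x,0)=T_{\cM}(\bar x)$, so $u\in T_{\cM}(\bar x)$ and hence $df(\bar x)(u)=df_{\cM}(\bar x)(u)=0$. Choose sequences $t_n\searrow 0$ and $w_n\to w$ that realize the liminf in the definition of $d^2 f(\bar x)(u\mid w)$, and set $x_n:=\bar x+t_n u+\tfrac{1}{2}t_n^2 w_n$. For $n$ large, define $y_n:=\proj_{\cM}(x_n)$, which is well defined by the $C^2$-smoothness of $\cM$. Representing $\cM$ locally as a $C^3$ graph of a map $h\colon T_{\cM}(\bar x)\to N_{\cM}(\bar x)$ with $h(0)=0$ and $\nabla h(0)=0$, an explicit second-order expansion of the projection yields
$$y_n=\bar x+t_n u+\tfrac{1}{2}t_n^2\bar w_n+o(t_n^2),\qquad \bar w_n\to\bar w:=P_{T_{\cM}(\bar x)}w+\nabla^2 h(0)(u,u),$$
together with $x_n-y_n\in N_{\cM}(y_n)$ and $\|x_n-y_n\|=O(t_n^2)$. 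The vector $\bar w$ lies in the domain of $d^2 f_{\cM}(\bar x)(u\mid\cdot)$, so the liminf definition applied along $\{y_n\}\subset\cM$, combined with the footnote asserting that $d^2 f_{\cM}(\bar x)(u\mid\cdot)$ is constant on its domain, gives
$$\liminf_n\frac{f(y_n)-f(\bar x)}{\tfrac{1}{2}t_n^2}\;\geq\; d^2 f_{\cM}(\bar x)(u\mid\bar w)\;=\;d^2 f_{\cM}(\bar x)(u).$$

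It remains to compare $f(x_n)$ with $f(y_n)$. Weak convexity \eqref{eqn:subgrad_ineq} applied at $y_n$ gives, for every $v_n\in\partial f(y_n)$,
$$f(x_n)\;\geq\; f(y_n)+\langle v_n,\,x_n-y_n\rangle-\tfrac{\rho}{2}\|x_n-y_n\|^2,$$
in which the quadratic error is $O(t_n^4)=o(t_n^2)$. The crucial input is the subdifferential continuity afforded by the active manifold structure for weakly convex functions: since $y_n\to\bar x$ along $\cM$ and $0\in\partial f(\bar x)$, one may select a vanishing tangential subgradient $v_n\in\partial f(y_n)\cap T_{\cM}(y_n)$; concretely, $v_n:=\nabla_{\cM}(f|_{\cM})(y_n)$ is a subgradient of $f$ via the partial-smoothness formula implicit in \cite[Proposition 10.10]{darxiv}, and $v_n\to 0$ since $\bar x$ is critical for $f|_{\cM}$. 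Orthogonality $x_n-y_n\in N_{\cM}(y_n)\perp T_{\cM}(y_n)$ then gives $\langle v_n,x_n-y_n\rangle=0$, whence $f(x_n)\geq f(y_n)-O(t_n^4)$. Chaining this with the previous display produces $d^2 f(\bar x)(u\mid w)\geq d^2 f_{\cM}(\bar x)(u)$.

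The subtle ingredient is the tangential subgradient selection; mere outer semicontinuity of $\partial f$ does not suffice, since the normal component of an arbitrary $v_n\in\partial f(y_n)$ can be of unit size and would contribute an uncontrolled $O(t_n^2)$ error swamping the target. The sharpness condition built into Definition \ref{defn:ident_man}, combined with weak convexity of $f$, is precisely what forces a subgradient lying in $T_{\cM}(y_n)$ to exist near the critical point.
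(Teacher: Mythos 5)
The paper itself offers no proof of this statement---it is imported verbatim from \cite[Corollary 4.15]{drusvyatskiy2016generic}---so your argument is necessarily an independent route, and as a self-contained proof it is essentially correct. The reduction is sound: realizing sequences $x_n=\bar x+t_nu+\tfrac12 t_n^2w_n$, projecting onto $\cM$ to get $y_n=\bar x+t_nu+\tfrac12 t_n^2\bar w_n+o(t_n^2)$ with $\bar w_n\to P_{T_{\cM}(\bar x)}w+\nabla^2h(0)(u,u)$ (which indeed lies in the domain described in the paper's footnote, so the constancy of $d^2f_{\cM}(\bar x)(u\mid\cdot)$ applies), and then comparing $f(x_n)$ with $f(y_n)$ via \eqref{eqn:subgrad_ineq}; you also correctly identify the crux, namely that a generic bounded subgradient at $y_n$ only yields an $O(t_n^2)$ error term and that one needs a subgradient (nearly) orthogonal to $x_n-y_n$. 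The only repair needed is the justification of that tangential selection: it does not follow from \cite[Proposition 10.10]{darxiv} (a uniqueness statement), but it does follow from a fact this paper itself uses in Appendix~\ref{sec:proof_generic}, namely that $\gph\,\partial f$ coincides with $\gph\,\partial f_{\cM}$ near $(\bar x,0)$ \cite[Proposition 10.2]{darxiv}. Indeed, writing $F$ for a smooth extension of $f|_{\cM}$, the vector $v_n:=\nabla_{\cM}(f|_{\cM})(y_n)$ lies in $\partial f_{\cM}(y_n)=\nabla F(y_n)+N_{\cM}(y_n)$ and $(y_n,v_n)\to(\bar x,0)$, so the graph coincidence gives $v_n\in\partial f(y_n)$ for all large $n$; equivalently one can argue via prox-identification, since for $\mu<\rho^{-1}$ one has $v\in\partial f(y)$ if and only if $y=\prox_{\mu f}(y+\mu v)$, and $\prox_{\mu f}$ maps a neighborhood of $\bar x$ into $\cM$ \cite{hare_alg}. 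With that citation corrected (and the trivial case $d^2f(\bar x)(u\mid w)=+\infty$ noted), your proof stands; relative to simply quoting \cite{drusvyatskiy2016generic}, it buys an explicit, elementary argument whose only nonelementary input is the local graph-coincidence property of active manifolds under weak convexity.
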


A natural question 
	is 
whether we expect the strict saddle property to hold typically. 
One supporting piece 
	of 
	evidence 
		is 
	that the property holds under generic linear perturbations of semialgebraic problems.\footnote{A function is semi-algebraic if its graph can be written as a finite union of sets each cut out by finitely many polynomial inequalities} 
This is almost immediate from guarantees in \cite[Theorem 4.16]{drusvyatskiy2016generic}, though this conclusion is not explicitly stated in the theorem statement. We state this guarantee below and provide a quick proof in Section~\ref{sec:proof_generic} for completeness. 

\begin{thm}[Strict saddle property is generic]\label{thm:strict_saddle_hoora_gen}
Consider a closed, weakly convex, semi-algebraic function $f\colon\R^d\to\R\cup\{\infty\}$. Then for a full Lebesgue measure set of perturbations $v\in \R^d$, the perturbed function $x\mapsto f(x)-\langle v,x\rangle$ has the strict saddle property.
\end{thm}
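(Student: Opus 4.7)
The plan is to invoke the genericity theorem \cite[Theorem 4.16]{drusvyatskiy2016generic} as a black box. That result guarantees, for a closed weakly convex semi-algebraic $f$, that for almost every $v\in\R^d$ every critical point $\bar x$ of $f_v(x):=f(x)-\langle v,x\rangle$ lies on a $C^2$ active manifold $\cM$ for $f_v$, and moreover the restriction $(f_v)_{\cM}$ has a \emph{nondegenerate} Hessian at $\bar x$ in the Lagrangian sense (no zero eigenvalue on $T_{\cM}(\bar x)$). It thus suffices to show that under this dichotomy on the spectrum -- either strictly positive definite or possessing a strictly negative eigenvalue -- the critical point is respectively a local minimizer of $f_v$ or a strict saddle in the sense of Definition~\ref{defn:strict_saddle_conv}. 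The negative-eigenvalue case is immediate from the definition of strict saddle, so the work is entirely in the positive-definite case.

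So fix $v$ as above and a critical point $\bar x$ of $f_v$ with $C^2$-active manifold $\cM$ and assume $\nabla^2_{yy}\cL(\bar x,\bar\lambda)$ is positive definite on $T_{\cM}(\bar x)$, where $\cL$ is the Lagrangian associated to $(f_v)_{\cM}$. The second-order sufficient condition \eqref{eqn:suff_cond}-\eqref{eqn:quadr_growth} then yields $\alpha>0$ with
\[
f_v(y)-f_v(\bar x)\geq \alpha\|y-\bar x\|^2\qquad\text{for all }y\in\cM\text{ near }\bar x.
\]
The remaining step is to upgrade this quadratic growth along $\cM$ to local optimality of $\bar x$ on all of $\R^d$. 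The tool is the sharpness clause of Definition~\ref{defn:ident_man}: there exist a neighborhood $U$ of $\bar x$ and $c>0$ with $\|w\|\ge c$ whenever $w\in\partial f_v(x)$ and $x\in U\setminus\cM$.

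I would now combine sharpness with weak convexity via a projection argument. Given $x\in U$ close to $\bar x$ off $\cM$, let $y\in\cM$ be a nearest point; for $x$ close enough, $y\in U$. The subgradient inequality \eqref{eqn:subgrad_ineq} applied at $x$ with any $w\in\partial f_v(x)$ gives
\[
f_v(y)\geq f_v(x)+\langle w,y-x\rangle-\tfrac{\rho}{2}\|x-y\|^2\geq f_v(x)-\|w\|\cdot\|x-y\|-\tfrac{\rho}{2}\|x-y\|^2.
\]
Choosing the direction of $w$ that minimizes this is not what is needed; instead I would dualize. The cleaner route is to apply \eqref{eqn:subgrad_ineq} in the other direction: take $w\in\partial f_v(x)$ and use the convexity of $f_v+\tfrac{\rho}{2}\|\cdot\|^2$ evaluated between $x$ and a judicious point on $\cM$ along the direction of $w$. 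One then exploits $\|w\|\ge c$ to obtain $f_v(x)\ge f_v(y)+(c-O(\|x-y\|))\|x-y\|$, which for $x$ sufficiently close to $\bar x$ gives $f_v(x)\geq f_v(y)\geq f_v(\bar x)$. Combined with the on-manifold growth this produces local minimality (indeed, quadratic growth) at $\bar x$.

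The main obstacle is precisely this promotion step: turning the on-manifold quadratic growth into a uniform lower bound on a full neighborhood in $\R^d$. The ingredients -- sharpness of the subdifferential off $\cM$, weak convexity, and continuity of the projection onto the $C^2$ manifold $\cM$ near $\bar x$ -- are all in place, but one must be careful that the approximation errors (the $\rho/2$ term and higher-order geometric terms from projecting onto $\cM$) are absorbed by the sharpness constant $c>0$ when $x$ is close enough to $\bar x$. Once this local lower bound is established, every critical point of $f_v$ is either a local minimizer or a strict saddle, which is exactly the  strict saddle property.
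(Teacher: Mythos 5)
Your overall architecture matches the paper's: invoke the genericity results of \cite{drusvyatskiy2016generic}, split at each critical point into the negative-curvature case (immediately a strict saddle) and the positive-definite case, and in the latter case promote quadratic growth of $f_v$ restricted to $\cM$ to local minimality of $f_v$ on all of $\R^d$. The gap is precisely in the promotion step, which you yourself flag as the ``main obstacle'' but do not actually close. The inequality you display, $f_v(y)\geq f_v(x)+\langle w,y-x\rangle-\tfrac{\rho}{2}\|x-y\|^2$ with $w\in\partial f_v(x)$ and $y$ the projection of $x$ onto $\cM$, only bounds $f_v(x)$ from \emph{above}; and the sharpness clause of Definition~\ref{defn:ident_man} controls only the norm $\|w\|\geq c$, not the component $\langle w, x-y\rangle$, so the claimed conclusion $f_v(x)\geq f_v(y)+(c-O(\|x-y\|))\|x-y\|$ does not follow from the ingredients you list. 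The proposed fix (``dualize'', pick a ``judicious point on $\cM$ along the direction of $w$'') is not an argument: there is no reason such a point lies on or near $\cM$, and nothing in the definition aligns off-manifold subgradients with the direction toward $\cM$. A correct route is either to cite the identification literature as the paper does --- \cite[Proposition 7.2]{ident} states exactly that a strong local minimizer of $f_{\cM}$ is a strong local minimizer of $f$ --- or to run a proximal-point argument: if $x_i\to\bar x$ violated growth, the points $y_i=\prox_{\mu f_v}(x_i)$ converge to $\bar x$ with vanishing subgradients $\mu^{-1}(x_i-y_i)\in\partial f_v(y_i)$, hence lie on $\cM$ by sharpness, and the prox descent inequality combined with on-manifold quadratic growth yields a contradiction. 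Weak convexity enters through the prox map, not through the pointwise inequality \eqref{eqn:subgrad_ineq} applied at $x$.

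A secondary issue is the black box you start from. The paper explicitly notes that the needed conclusion is \emph{not} stated in \cite[Theorem 4.16]{drusvyatskiy2016generic}; its appendix instead uses \cite[Corollary 4.8]{drusvyatskiy2016generic}, which supplies an active manifold together with metric regularity of $\partial g$ at $(\bar x,0)$, and then proves (Lemma~\ref{lem:basic_lem}, via the Lagrangian description of $N_{\gph \partial f_{\cM}}$ from \cite{lewis2013partial}) that metric regularity forces $\inf_{u\in\mathbb{S}^{d-1}\cap T_{\cM}(\bar x)} d^2f_{\cM}(\bar x)(u)\neq 0$. Your proposal assumes this nondegenerate dichotomy outright, thereby skipping the one genuinely technical conversion the paper's proof performs. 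So while the skeleton is the same, both the entry point and the decisive promotion step need to be repaired before this constitutes a proof.
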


\subsection{The Center Stable Manifold Theorem} 
In this work, 
	we 
		will show 
	that a variety of simple algorithms 
		escape
	strict saddle points. To prove results of this type, 
	we will
		interpret
	 algorithms 
		as
		fixed point iterations 
			of 
			a nonlinear map $T\colon \R^d\to\R^d$, 
			having certain favorable properties. As in the smooth setting of \cite{gradient_descent_jason}, the core  
	of 
	our arguments will be based on the center stable manifold theorem.

\begin{thm}[The Center Stable Manifold Theorem{~\cite[Theorem III.7]{shub2013global}}]\label{thm:center_stab}
Let the origin be a fixed point of the $C^1$ local diffeomorphism $T \colon U \rightarrow \RR^d$ where $U$ is a neighborhood of the origin in $\RR^d$. Let $E^s \oplus E^c \oplus E^u$ be the invariant splitting of $\RR^d$ into the generalized eigenspaces of the Jacobian $\nabla T(0)$ corresponding to eigenvalues of absolute value less than one, equal to one, and greater than one. Then there exists a local $T$ invariant $C^1$ embedded disk $\stableset$, tangent to $E^s \oplus E^c$ at $0$ and a neighborhood $B$ around zero such that $T(\stableset) \cap B \subseteq\stableset$.  In addition, if $T^k(x) \in B$ for all $k \geq 0$, then $x \in \stableset$.
\end{thm}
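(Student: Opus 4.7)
The theorem is a classical result from smooth dynamics, and the standard proof proceeds via the Hadamard graph transform. The plan would be to construct $\stableset$ as the graph of a $C^1$ map $\psi\colon E^{sc}\to E^u$, where $E^{sc}=E^s\oplus E^c$, satisfying $\psi(0)=0$ and $D\psi(0)=0$; the stated tangency and local invariance then follow directly from this representation.

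The first step is to write $T(x)=Ax+\phi(x)$, with $A=\nabla T(0)$ and $\phi$ a higher-order remainder vanishing to first order at the origin, and to express both in block form relative to the splitting $\R^d=E^{sc}\oplus E^u$. Since $T$ is a $C^1$ local diffeomorphism, the restriction $A_u$ is invertible and its eigenvalues have modulus strictly greater than one, while $A_{sc}$ has spectrum contained in the closed unit disk. Multiplying $\phi$ by a smooth radial cutoff supported near the origin would replace $T$ with a globally defined map $\widetilde T$ that agrees with $T$ on some neighborhood $U$ of $0$ and for which $\Lip(\phi)$ can be made as small as desired.

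Next comes the graph transform itself: on the space $\mathcal{X}$ of Lipschitz maps $\psi\colon E^{sc}\to E^u$ with $\psi(0)=0$ and small Lipschitz constant, define $\Gamma\psi$ by declaring that the graph of $\Gamma\psi$ equals the image under $\widetilde T$ of the graph of $\psi$. To make this well-defined one solves an implicit equation for the $E^{sc}$-component, which is feasible because the expansion rate of $A_u$ strictly exceeds the norm of $A_{sc}$ (after cutoff). Using this spectral gap, $\Gamma$ is a contraction in a Lipschitz norm and so admits a unique fixed point $\psi^*$; the set $\stableset:=\gph(\psi^*)\cap U$ then satisfies $T(\stableset)\cap B\subseteq \stableset$ by construction. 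The converse characterization---that $T^k(x)\in B$ for all $k\geq 0$ forces $x\in\stableset$---follows because any point off the graph has a nonzero $E^u$-component that gets amplified by $A_u$ under iteration, eventually leaving $B$.

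The main obstacle would be upgrading $\psi^*$ from Lipschitz to $C^1$ with the correct derivative at the origin. Because eigenvalues on $E^c$ may have modulus exactly one, one does not automatically inherit $C^1$ regularity from a fiber-contraction argument on $C^1$ norms. The standard workaround is to run a second contraction argument on the space of candidate Jacobians (bundle maps over $\mathcal{X}$) whose unique fixed point is then identified with $D\psi^*$ by uniqueness, followed by a direct verification that this derivative depends continuously on the base point and vanishes at $0$. Since the paper invokes the theorem as a black box from \cite{shub2013global}, this regularity step is the one I would carry out carefully rather than merely sketching.
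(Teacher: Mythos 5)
The paper offers no proof of this statement at all---it is imported verbatim as \cite[Theorem III.7]{shub2013global} and used as a black box---so the only thing to assess is whether your sketch would actually work, and there is a genuine gap in its central step. You define the graph transform by pushing the graph of $\psi\colon E^{sc}\to E^u$ forward under $\widetilde T$ and claim this is a contraction. For the \emph{center-stable} manifold this is the wrong direction. Already in the linear model with $A_{sc}=1$ on $E^{sc}=\R$ and $A_u=2$ on $E^u=\R$, the pushforward transform is $\Gamma\psi=2\psi$: it has the correct fixed point $\psi^*=0$, but it \emph{expands} both the sup norm and the Lipschitz seminorm by the factor $\|A_u\|>1$, so Banach iteration diverges and no contraction argument is available. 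The forward transform is the right tool for graphs over $E^u$ (or $E^{cu}$), where the fiber factor is $\approx\|A_{s}\|\,\|A_u^{-1}\|<1$; for graphs over $E^{sc}$ into $E^u$ one must use the \emph{pullback} transform, i.e.\ define $\Gamma\psi$ implicitly by requiring that $\widetilde T$ maps the point $(\xi,(\Gamma\psi)(\xi))$ onto the graph of $\psi$, solving in the fiber with $A_u^{-1}$. Equivalently, one constructs $\stableset$ as the center-unstable manifold of $T^{-1}$ (note that $E^s\oplus E^c$ for $T$ is exactly $E^c\oplus E^u$ for $T^{-1}$), which is how Shub's Theorem III.7 is actually obtained from the graph-transform theorem. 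In an adapted norm the fiber factor then becomes $\|A_u^{-1}\|\bigl(\|A_{sc}\|+\varepsilon\bigr)<1$, which is the contraction you need. Relatedly, your justification for solvability of the base equation is off: invertibility of $\xi\mapsto A_{sc}\xi+\phi_{sc}(\xi,\psi(\xi))$ has nothing to do with the gap between $A_u$ and $A_{sc}$; it only needs $\Lip(\phi)$ small relative to $\|A_{sc}^{-1}\|^{-1}$ (and, if you work with $\widetilde T^{-1}$, you must also note that the cutoff can be chosen so that $\widetilde T=A+\text{(small Lipschitz)}$ is a lipeomorphism, exactly as in Lemma~\ref{lem:ver_lipeo}).

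The remaining ingredients of your sketch are essentially sound once the direction is fixed: the cutoff reduction, the escape argument (phrased correctly as geometric growth of the $E^u$-distance from the invariant graph, not of the raw $E^u$-component, since points of $\stableset$ off the tangent space also have nonzero $E^u$-components), and the fiber-contraction argument for upgrading $\psi^*$ to $C^1$ with $D\psi^*(0)=0$, which is indeed the delicate step in the presence of the center spectrum and is the route taken in \cite{shub2013global}.
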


An immediate consequence 
	of 
	this theorem is the following: 
	if $\nabla T(0)$ 
		is 
	invertible 
			and 
		has 
	at least one eigenvalue of magnitude greater than one, 
	then 
		there exists 
	a neighborhood $B$ 
		of 
		the origin such that the set
$$\{x\in B:~ T^k(x)\in B~ \textrm{ for all }k\geq 0\},$$
has measure zero. 
This fact motivates the following key definition.

\begin{definition}[Unstable fixed points]
{\rm 
A fixed point $\bar x$ 
	of 
	a map $T\colon\R^d\to\R^d$ 
		is called 
	{\em unstable}
	if 
		$T$ 
			is 
		$C^1$-smooth 
			around 
			$\bar x$ 
				and 
		the Jacobian $\nabla T(\bar x)$ 
			has 
		an eigenvalue 
			of
			magnitude strictly greater than one.}
\end{definition}

To globalize the guarantees
	of
	the center stable manifold theorem, 
	we 
		will need 
	to impose global regularity properties on $T$. 
In this work, 
	we 
		will require
	the map $T$ 
		to be
	a {\em lipeomorphism},
	namely,  
	we 
		require 
	that $T$ 
		is 
	globally Lipschitz 
		and 
	its inverse $T^{-1}$ 
		is 
	a well-defined globally Lipschitz map. 
The following corollary 
	is 
now immediate. 
Its proof 
	closely follows 
the presentation 
	in
	\cite[Theorem 2]{Lee:2019:FMA:3349830.3349888}.

\begin{cor}\label{cor:main_stab_cor}
Let $T \colon \RR^d \rightarrow \RR^d$ 
	be 
a lipeomorphism 
	and 
let $\mathcal{U}_T$ 
	consist 
of 
all unstable fixed points $x$ 
	of 
	$T$ 
	at 
	which the Jacobian $\nabla T(x)$ 
		is 
	invertible. 
Then the set 
	of 
	initial conditions 
		attracted 
	by such fixed points
$$
W : = \left\{x \in \RR^d \colon \lim_{k \rightarrow \infty} T^k(x) \in \cU_T\right\}
$$ 
has zero Lebesgue measure. 
\end{cor}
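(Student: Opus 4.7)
The plan is to reduce the corollary to the Center Stable Manifold Theorem via a second-countability argument, then to pull back the resulting measure-zero sets using the Lipschitz inverse $T^{-1}$.

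First, I would fix $\bar x \in \cU_T$ and apply Theorem~\ref{thm:center_stab} at $\bar x$ (after translation). Since $\nabla T(\bar x)$ is invertible, the local smooth inverse exists, so $T$ is a $C^1$ local diffeomorphism near $\bar x$. Since $\bar x$ is unstable, at least one eigenvalue of $\nabla T(\bar x)$ has magnitude strictly greater than one, so the unstable eigenspace $E^u$ is nonzero, and hence the center-stable disk $\stableset$ at $\bar x$ has dimension at most $d-1$. In particular $\stableset$ has Lebesgue measure zero in $\RR^d$. The theorem then supplies a neighborhood $B_{\bar x}$ of $\bar x$ such that every point $y \in B_{\bar x}$ with $T^k(y) \in B_{\bar x}$ for all $k \geq 0$ lies in $\stableset$; call this ``trapped'' set $S_{\bar x} \subseteq \stableset$, which is therefore Lebesgue null.

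Next, I would invoke second countability of $\RR^d$ to extract a countable subcover. The family $\{B_{\bar x} : \bar x \in \cU_T\}$ covers $\cU_T$, and hence one can pick a countable subfamily $\{B_i\}_{i\in \NN}$ with corresponding null sets $S_i$ such that $\cU_T \subseteq \bigcup_i B_i$ and each trapped set $S_i \subseteq B_i$ has measure zero. Now suppose $x \in W$, so $T^k(x) \to x^* \in \cU_T$. Choose $i$ with $x^* \in B_i$; by convergence, there exists $N$ such that $T^k(x) \in B_i$ for all $k \geq N$. Setting $y = T^N(x)$, we have $T^k(y) \in B_i$ for every $k \geq 0$, so $y \in S_i$, i.e. $x \in T^{-N}(S_i)$. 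Therefore
\[
W \subseteq \bigcup_{N=0}^{\infty} \bigcup_{i=1}^{\infty} T^{-N}(S_i).
\]

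Finally, I would use the lipeomorphism hypothesis to control measure. Since $T^{-1}$ is globally Lipschitz, it is absolutely continuous and maps Lebesgue null sets to Lebesgue null sets; by induction the same holds for $T^{-N}$ for any $N \geq 0$. Consequently each set $T^{-N}(S_i)$ has Lebesgue measure zero, and the displayed countable union above is a null set, so $W$ is null.

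The main obstacle, although minor, is the bookkeeping step of combining the local conclusion of the Center Stable Manifold Theorem with convergence along the orbit: one must notice that convergence $T^k(x) \to x^*$ guarantees eventual entrapment in any fixed neighborhood $B_i$ of $x^*$, which is precisely what matches the hypothesis ``$T^k(y) \in B_i$ for all $k \geq 0$'' after shifting the index by $N$. The Lipschitz-inverse property of $T$ then does exactly the work needed to turn local measure-zero conclusions into a global one, which is why the definition of lipeomorphism was introduced in the first place.
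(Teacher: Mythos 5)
Your proposal is correct and follows essentially the same route as the paper's proof: apply the center stable manifold theorem at each point of $\cU_T$ to get a null trapped set $S_{\bar x}$, extract a countable subcover by second countability, include $W$ in the countable union $\bigcup_{i,N} T^{-N}(S_{\bar x_i})$, and use the Lipschitz inverse to conclude each preimage is null. You merely spell out two steps the paper leaves implicit (why the trapped set is null via the dimension of the center-stable disk, and the eventual-entrapment argument giving the inclusion for $W$), which is fine.
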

\begin{proof}
For every $\bar x \in \cU_T$ 
	there exists 
a neighborhood $U$ 
	of 
	$\bar x$ such that $T \colon U \rightarrow \RR^d$ 
		is 
	a local diffeomorphism. 
Thus, the center stable manifold theorem 
	shows there exists 
an open neighborhood $B_{\bar x}$ 
	of 
	$\bar x$ so that $S_{\bar x} := \bigcap_{k = 0}^\infty T^{-k}(B_{\bar x})$ 
		is 
	contained in a measure zero set. 
In particular, 
	$S_{\bar x}$ itself 
		is 
	measure zero. 

Now 
	observe 
that $\cU_T \subseteq \bigcup_{\bar x \in \cU_T} B_{\bar x}$ 
	is 
an open cover 
	of 
	$\cU_T$. 
Since $\RR^d$ 
	is 
second countable, 
	this cover 
		has 
	a countable subcover $\cU_{T} \subseteq \bigcup_{i =1}^\infty B_{\bar x_i}$. 
Observe the inclusion $W \subseteq \bigcup_{i = 1}^\infty \bigcup_{j = 0}^\infty T^{-j}(S_{\bar x_i})$. Since $T$ 
	is 
a lipeomorphism, 
	the right hand side 
		is 
	a countable union 
		of 
		measure zero sets, 
	and therefore $W$ 
		has 
	measure zero. 
\end{proof}

To verify that a map $T$ is a lipeomorphism, 
	we will appeal 
to the following standard sufficient condition. 
We 
	provide 
a quick proof 
	for 
	completeness.

\begin{lem}\label{lem:ver_lipeo}
	Let $H \colon \RR^d \rightarrow \RR^d$ 
		be 
	a Lipschitz continuous map 
		with 
		constant $\lambda < 1$. 
Then $(I+H)$ 
	is 
invertible 
	and 
$(I+H)^{-1} \colon \RR^d \rightarrow \RR^d$ 
	is 
Lipschitz continuous 
	with 
	constant $(1-\lambda)^{-1}$. 
\end{lem}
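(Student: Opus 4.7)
The plan is to establish injectivity, surjectivity, and the Lipschitz bound on the inverse in one stroke via the Banach contraction principle applied fiberwise. Fix an arbitrary target $y \in \RR^d$ and consider the auxiliary map $T_y \colon \RR^d \to \RR^d$ defined by $T_y(x) := y - H(x)$. Since $H$ is $\lambda$-Lipschitz with $\lambda < 1$, the map $T_y$ is a strict contraction with the same constant $\lambda$. As $\RR^d$ is complete, the Banach fixed point theorem delivers a unique fixed point $x \in \RR^d$ satisfying $x = y - H(x)$, i.e., $(I+H)(x) = y$. Thus $(I+H)$ is a bijection and $(I+H)^{-1}$ is well-defined.

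For the Lipschitz bound, I would argue directly. Take any $y_1, y_2 \in \RR^d$ and set $x_i := (I+H)^{-1}(y_i)$ for $i = 1,2$. Rearranging the defining identities $x_i + H(x_i) = y_i$ gives
\[
x_1 - x_2 = (y_1 - y_2) - \bigl(H(x_1) - H(x_2)\bigr).
\]
The triangle inequality together with the $\lambda$-Lipschitz property of $H$ yields
\[
\|x_1 - x_2\| \;\leq\; \|y_1 - y_2\| + \lambda\, \|x_1 - x_2\|,
\]
so that $(1-\lambda)\|x_1 - x_2\| \leq \|y_1 - y_2\|$, and hence $\|(I+H)^{-1}(y_1) - (I+H)^{-1}(y_2)\| \leq (1-\lambda)^{-1} \|y_1 - y_2\|$, as required.

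There is no genuine obstacle here; the only thing to be mindful of is that $H$ is assumed merely Lipschitz rather than differentiable, so arguments going through the derivative of $I+H$ are not directly available. Using the contraction principle sidesteps this entirely and makes both the bijection and the Lipschitz estimate fall out of the single inequality above.
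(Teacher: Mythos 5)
Your proposal is correct and follows essentially the same route as the paper: invertibility via the Banach contraction principle applied to the map $x \mapsto y - H(x)$, and the Lipschitz bound via the (reverse) triangle inequality applied to $x_1 - x_2 = (y_1 - y_2) - (H(x_1) - H(x_2))$. No meaningful differences.
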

\begin{proof}
To show that $(I+ H)$ 
	is 
invertible, 
	we 
		must show 
	that for every $u \in \RR^d$, 
	the equation $u = H(x) + x$ 
		has 
	a unique solution $x(u) \in \RR^d$. 
Equivalently, 
	we 
		must show 
	that for every $u\in\R^d$,	the mapping
	$$
	\zeta_u(x) := u - H(x)
	$$
	has a unique fixed point. This is immediate from Banach's fixed point theorem since $\zeta_u(\cdot)$ is strictly contractive.  
	
	To show that $(I+H)^{-1}$ is Lipschitz, 
		choose 
	arbitrary $u, v \in \RR^d$ 
		and
		define  
	$x := (I+H)^{-1}(u)$ and $y := (I+H)^{-1}(v)$.
We then compute	
\begin{align*}
	\|u - v\| &= \|(I+H)(x) - (I+H)(y)\| \geq  \|x-y\| - \|H(x) - H(y)\| \geq (1-\lambda)\|x - y\|,
	\end{align*}
where 
	we have used
		the reverse triangle inequality 
			and 
		Lipschitz continuity 
			of 
			$H$. 
Rearranging 
	completes
the proof. 
\end{proof}

While the iteration mappings $S$ 
		of 
		Section~\ref{sec:intro_escape}
			can be Lipschitz, 
	they 
		are 
	usually not invertible.
Thus to ensure 
	Lipschitz invertibility, 
	we 
		will consider
damped fixed point iterations, 
	as summarized in the following elementary lemma. 
We 
	provide 
a quick proof 
	for 
	completeness.
\begin{lem}[Damped fixed point iterations]\label{lem:relax_and_split}
Consider a map $S\colon\R^d\to\R^d$ 
	and 
fix a damping parameter $\alpha \in (0,1)$. 
Define the map 
$$T(x)=(1-\alpha)x+\alpha\cdot S(x).$$ 
Then the following 
	are 
true.
\begin{enumerate}
	\item\label{iit1} 
	The fixed points 
		of 
		$T$ 
			and 
		$S$ 
		coincide.
	\item\label{iit3} 
	If $S$ is differentiable at $\bar x$ and the Jacobian $\nabla S(\bar x)$ has a real eigenvalue strictly greater than one, 
	then $\bar x$ 
		is 
	an unstable fixed point 
		of 
		$T$.
	\item\label{iit4} 
	If the map $S$ 
		is 
	continuous 
		and 
	the iterates generated 
		by 
		the process $x_{k+1}=T(x_k)$ 
			converge 
	to some point $\bar x$, 
	then $\bar x$ 
		must be 
	a fixed point 
		of 
		$S$.
	\item\label{iit2} 
	If the map $I-S$ 
		is 
	$L$-Lipschitz, 
	then $T$ 
		is 
	a lipeomorpshim 
		for 
		any $\alpha\in (0,L^{-1})$.
\end{enumerate}
\end{lem}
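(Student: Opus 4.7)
The plan is to verify each of the four claims directly from the definition $T(x) = (1-\alpha)x + \alpha S(x)$, exploiting that the construction is a convex combination of the identity and $S$. None of the claims require a deep argument; the main content is arithmetic and an appeal to the previous Lemma~\ref{lem:ver_lipeo}.

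For claim~\ref{iit1}, I would simply note that the equation $T(x)=x$ rearranges to $\alpha(S(x)-x)=0$, and since $\alpha\neq 0$, this is equivalent to $S(x)=x$. For claim~\ref{iit3}, I would compute the Jacobian: since $T$ is the sum $(1-\alpha)I+\alpha S$, it is differentiable at $\bar x$ with $\nabla T(\bar x) = (1-\alpha)I+\alpha\nabla S(\bar x)$. If $\lambda\in\R$ is an eigenvalue of $\nabla S(\bar x)$ with $\lambda>1$, then $(1-\alpha)+\alpha\lambda = 1+\alpha(\lambda-1)$ is a real eigenvalue of $\nabla T(\bar x)$ that is strictly greater than $1$, making $\bar x$ an unstable fixed point of $T$. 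Claim~\ref{iit4} follows by passing to the limit in the recursion $x_{k+1}=(1-\alpha)x_k+\alpha S(x_k)$ using continuity of $S$ to conclude $\bar x=(1-\alpha)\bar x+\alpha S(\bar x)$, hence $S(\bar x)=\bar x$.

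The only claim with any content is~\ref{iit2}. Here I would rewrite
$$T(x) = x - \alpha(I-S)(x) = (I+H)(x),\qquad \textrm{where } H:=-\alpha(I-S).$$
Since $I-S$ is $L$-Lipschitz by hypothesis, $H$ is $\alpha L$-Lipschitz, and for $\alpha\in(0,L^{-1})$ the constant $\alpha L$ lies in $(0,1)$. Lemma~\ref{lem:ver_lipeo} then immediately produces that $T=I+H$ is invertible with $T^{-1}$ globally Lipschitz with constant $(1-\alpha L)^{-1}$. To conclude that $T$ itself is Lipschitz, I would observe that $S = I-(I-S)$ is $(1+L)$-Lipschitz, whence $T=(1-\alpha)I+\alpha S$ is $(1+\alpha L)$-Lipschitz. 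Combining both bounds shows $T$ is a lipeomorphism.

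I do not anticipate a serious obstacle: all four parts are immediate consequences of the algebraic identity $T=I-\alpha(I-S)$, continuity of $S$, and the previously established Lemma~\ref{lem:ver_lipeo}. The only small care is to verify that $S$ is itself Lipschitz in part~\ref{iit2}, but this is automatic from $L$-Lipschitzness of $I-S$ via the triangle inequality.
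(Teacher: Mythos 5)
Your proposal is correct and follows essentially the same route as the paper: claims~\ref{iit1} and~\ref{iit3} by direct algebra on $T=(1-\alpha)I+\alpha S$ and its Jacobian, claim~\ref{iit4} by passing to the limit using continuity, and claim~\ref{iit2} by writing $T=I+H$ with $H=\alpha(S-I)$ (which is $\alpha L$-Lipschitz with $\alpha L<1$) and invoking Lemma~\ref{lem:ver_lipeo}. Your explicit check that $T$ itself is globally Lipschitz is a small but welcome addition that the paper leaves implicit.
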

\begin{proof}
Claims \ref{iit1} and \ref{iit3} follow directly from algebraic manipulations. Claim \ref{iit2} follows immediately from Lemma~\ref{lem:ver_lipeo} by writing $T=I+H$ with $H=\alpha(S-I)$. To see claim \ref{iit4}, suppose that $T$ is continuous and that $x_k$ converge to some point $\bar x$. Then we deduce $$T(\bar x)=T\left(\lim_{k\to\infty} x_k\right)=\lim_{k\to\infty} T(x_k)=\lim_{k\to\infty} x_{k+1}=\bar x.$$
Therefore $\bar x$ is a fixed point of $T$. Using claim \ref{iit1},
 we deduce that $\bar x$ is a fixed point of $S$.
\end{proof}

\section{The Proximal Point Method}\label{sec:prox_point}
We now 
	turn to
the saddle escape properties
	of
	the proximal-point method. 
Fixing the problem at hand, 
	we 
		consider 
$$\min_{x\in\R^d}~ f(x),$$
where $f\colon\R^d\to\R\cup\{\infty\}$
	 is 
a $\rho$-weakly convex function
	that
		is
	bounded from below. 
For a fixed $\mu<\rho^{-1}$, 
	the classical proximal-point method 
		is 
	precisely the fixed point iteration 
$$x_{t+1}=\prox_{\mu f}(x_t).$$ 
Key
to our analysis 
	is 
the equivalence 
	between 
		this algorithm
		and 
		 gradient descent on
			the Moreau envelope. 
This equivalence 
	follows
		from~\eqref{eqn:grad_moreau},
			which 
				quickly yields 
			the description
$$x_{k+1}=x_k- \mu\cdot\nabla f_{\mu}(x_k).$$
The saddle escape properties 
	of
	the proximal point method
	thus 
		flow
	from the strict saddle properties of the Moreau envelope. 
Indeed,
	 the following theorem 
shows that when $f$ 
	admits 
a $C^2$ active manifold 
	around 
	a critical point $\bar x$, 
	the envelope $f_{\mu}$ 
		is 
	automatically $C^2$-smooth near $\bar x$. 
Moreover, if $\bar x$ 
	is 
a strict saddle  
	of 
	$f$, 
	then it 
		is 
	also a strict saddle  
		of 
		$f_{\mu}$. 
Consequently,
	any strict saddle point 
		of $f$
		is 
	an unstable fixed point 
		of 
		the proximal map $\prox_{\mu f}(\cdot)$.

\begin{thm}[Saddle points of the Moreau envelope]\label{thm:saddle_moreau}
Let $f\colon\R^d\to\R\cup\{\infty\}$ 
	be 
		a closed 
			and 
		$\rho$-weakly convex function 
			and 
let $\bar x$ 
	be 
	any critical point 
		of 
		$f$. 
Suppose that $f$ 
	admits 
a $C^2$ active manifold $\cM$ 
	at 
	$\bar x$. 
Then
	for any $\mu<\rho^{-1}$, 
		the Moreau envelope $f_{\mu}$ 
			is 
				$C^2$-smooth 
					around 
					$\bar x$
				and
		its Hessian satisfies
\begin{equation}\label{eqn:mor_smooth}
\min_{h\in  \mathbb{S}^{d-1}\cap T_{\cM}(\bar x)} \langle\nabla^2 f_{\mu}(\bar x) h,h \rangle\leq \min_{h\in \mathbb{S}^{d-1}\cap T_{\cM}(\bar x)} d^2 f_{\cM}(\bar x)(h).
\end{equation}
Consequently, 
	if $\bar x$ 
		is 
	a strict saddle point
		of 
		$f$, 
	then $\bar x$ 
		is 
	both 
		a strict saddle point 
		of 
		$f_{\mu}$ 
			and  
		an unstable fixed point
			of 
			the proximal map $\prox_{\mu f}(\cdot)$.
Moreover,
		$\nabla \prox_{\mu f}(\bar x)$
			has 
		a real eigenvalue that is strictly greater than one. 
\end{thm}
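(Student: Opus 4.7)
The plan is to first prove that the proximal map sends a neighborhood of $\bar x$ entirely into the active manifold $\cM$, thereby reducing the Moreau envelope to the value function of a smooth optimization problem on $\cM$, to which Theorem~\ref{thm:stab} applies. Concretely, since $\bar x$ is critical for $f$ it is a fixed point of $\prox_{\mu f}$, and Lemma~\ref{lem_prop_moreau} ensures this map is globally Lipschitz. If $y=\prox_{\mu f}(x)$ were to lie in a small neighborhood $U$ of $\bar x$ but outside $\cM$, the optimality condition $\mu^{-1}(x-y)\in\partial f(y)$ together with the sharpness bound $\inf\{\|v\|:v\in\partial f(z),\,z\in U\setminus\cM\}\geq\delta>0$ would force $\|x-y\|\geq \mu\delta$; this contradicts $\|x-y\|\leq(1+\lip(\prox_{\mu f}))\|x-\bar x\|$ for $x$ sufficiently close to $\bar x$. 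Hence $\prox_{\mu f}(x)\in\cM$ for $x$ near $\bar x$, and so $f_\mu$ locally coincides with the value function of $\min_{y\in\cM}\,\tilde f(y)+\tfrac{1}{2\mu}\|y-x\|^2$, where $\tilde f$ is any $C^2$ extension of $f|_\cM$ to a neighborhood of $\bar x$.

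Next, I would invoke the perturbation theorem. Level-boundedness holds trivially after intersecting $\cM$ with a small closed ball, while quadratic growth follows from the fact that $y\mapsto f(y)+\tfrac{1}{2\mu}\|y-x\|^2$ is globally $(\mu^{-1}-\rho)$-strongly convex in $\R^d$ and hence on $\cM$. Theorem~\ref{thm:stab} then yields $C^2$-smoothness of $f_\mu$ around $\bar x$ and the explicit second-order expansion with the quadratic form $\eta(\cdot)$. Writing down the Lagrangian $\cL(x,y,\lambda)=\tilde f(y)+\tfrac{1}{2\mu}\|y-x\|^2+\langle G(y),\lambda\rangle$ for local defining equations $G=0$ of $\cM$, a direct computation at $(\bar x,\bar x,\bar\lambda)$ gives $H_{xx}=\mu^{-1}I$, $H_{xy}=-\mu^{-1}I$, and $H_{yy}=\nabla^2_{yy}\tilde\cL(\bar x,\bar\lambda)+\mu^{-1}I$, where $\tilde\cL(y,\lambda)=\tilde f(y)+\langle G(y),\lambda\rangle$ is the Lagrangian of the unperturbed problem $\min_{y\in\cM}f(y)$; comparing first-order conditions shows the two problems share the same multiplier $\bar\lambda$, so $\langle H_{yy}v,v\rangle=d^2f_\cM(\bar x)(v)+\mu^{-1}\|v\|^2$ on $T_\cM(\bar x)$.

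Feeding this into the formula for $\eta(h)$ and choosing the admissible test vector $v=h\in T_\cM(\bar x)$ yields
\[
\langle \nabla^2 f_\mu(\bar x)h,h\rangle = \eta(h) \leq \tfrac{1}{\mu}\|h\|^2 - \tfrac{2}{\mu}\|h\|^2 + \tfrac{1}{\mu}\|h\|^2 + d^2f_\cM(\bar x)(h) = d^2f_\cM(\bar x)(h),
\]
which, after minimizing over unit vectors in $T_\cM(\bar x)$, gives the claimed inequality~\eqref{eqn:mor_smooth}. If $\bar x$ is a strict saddle of $f$, there is some unit $u\in T_\cM(\bar x)$ with $d^2f_\cM(\bar x)(u)<0$; by the inequality, $\nabla^2f_\mu(\bar x)$ has a strictly negative real eigenvalue $-c<0$. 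Since $f_\mu$ is $C^2$ at $\bar x$ and $\bar x$ is critical for $f_\mu$ by Lemma~\ref{lem_prop_moreau}\eqref{it4:moreau}, this makes $\bar x$ a (classical) strict saddle of $f_\mu$. The identity $\prox_{\mu f}(x)=x-\mu\nabla f_\mu(x)$ then gives $\nabla\prox_{\mu f}(\bar x)=I-\mu\nabla^2f_\mu(\bar x)$, which has the real eigenvalue $1+\mu c>1$, certifying $\bar x$ as an unstable fixed point of $\prox_{\mu f}$.

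The main obstacle is the first step: establishing that the prox map reliably sends a neighborhood of $\bar x$ into $\cM$. This is where the specifically \emph{active} character of the manifold (sharpness) is essential; once this localization is in hand, the rest is smooth-setting perturbation analysis and a symbolic Hessian calculation.
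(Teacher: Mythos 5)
Your proposal is correct and follows essentially the same route as the paper: establish that $\prox_{\mu f}$ maps a neighborhood of $\bar x$ into $\cM$ via the sharpness condition (you argue quantitatively through the Lipschitz constant of the prox map, the paper through continuity of $\nabla f_{\mu}$ and a sequential argument — a cosmetic difference), then reduce $f_\mu$ to the value function of a smooth problem over $\cM$, apply the perturbation result of Theorem~\ref{thm:stab} with the same Lagrangian Hessians $H_{xx}=\mu^{-1}I$, $H_{xy}=-\mu^{-1}I$, $H_{yy}=D+\mu^{-1}I$, test with $v=h$ to obtain \eqref{eqn:mor_smooth}, and conclude via $\prox_{\mu f}=I-\mu\nabla f_{\mu}$. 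The only other deviation is that you verify level-boundedness by localizing $\cM$ to a closed ball rather than invoking Lemma~\ref{lem:checkcond}, which is equally valid.
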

\begin{proof}
It 
	is 
well known (for example from \cite{hare_alg}) 
	that 
	for all $x$ near $\bar x$, 
	the inclusion $\prox_{\mu f}(x)\in \cM$ holds. 
From this inclusion, 
	we 
		will be able to view 
	the proximal subproblem 
		through
	the lens of the perturbation result in Theorem~\ref{thm:stab}.
For the sake of completeness, however,
	let us first 
		quickly verify 
	the claim. 
Consider a sequence $x_i\to\bar x$ 
	and 
observe the inclusion $\nabla f_{\mu}(x_i)\in \partial f(\prox_{\mu f}(x_i))$. 
Since the gradient $\nabla f_{\mu}$ 
	is 
continuous, 
	we 
		deduce the limits
	$\prox_{\mu f}(x_i)\to \bar x$ 
			and 
	$\nabla f_{\mu}(x_i)\to 0$. 
Therefore by definition 
	of 
	the active manifold,
		we
			have $\prox_{\mu f}(x_i)\in \cM$ 
		for all sufficiency large indices $i$, proving the claim. 

Turning to the perturbation result, 
let $F\colon\R^d\to\R$ 
	be 
any $C^2$-smooth function 
	agreeing with $f$ 
		on 
		a neighborhood 
			of 
			$\bar x$ 
			in 
			$\cM$.\footnote{For example, let $F$ be a $C^2$ function defined on a neighborhood $U$ of $\bar x$ that agrees with $f$ on $U\cap \mathcal{M}$. Using a partition of unity (e.g. \cite[Lemma 2.26]{lee2013smooth}), one can extend $F$ from a slightly smaller neighborhood to be $C^2$ on all of $\R^d$.}
Applying the claim, 
	we
		find 
	that the equality
$$f_{\mu}(x)=\min_{y\in \cM}~ \left\{F(y)+\frac{1}{2\mu}\|y-x\|^2\right\},$$
holds for all $x$ near $\bar x$. 
Our goal
	is to apply the perturbation result (Theorem~\ref{thm:stab})
		with 
			$f(x,y):=F(y)+\frac{1}{2\mu}\|y-x\|^2$ 
				and 
			$\varphi(x):=f_{\mu}(x)$. 
To that end, 
we now 
	verify 
the assumptions 
	of 
	Theorem~\ref{thm:stab}. 
First we verify the quadratic growth condition:
since 
	we have chosen 
$\mu<\rho^{-1}$, 
	it 
		follows 
	that 
		for every $x\in\R^d$ 
		the function $y\mapsto f(x)+\frac{1}{2\mu}\|y-x\|^2$ 
			is 
		strongly convex 
			with 
			constant $\mu^{-1}-\rho$. 
Next we verify the level boundedness condition: 
since
	the minimizer $y(x):=\prox_{\mu f}(x)$ 
		of 
		this function 
			varies 
	continuously 
		and 
	satisfies $y(\bar x)=\bar x$, 
	the conditions of Lemma~\ref{lem:checkcond} 
		are 
	satisfied.
Therefore, 
	the assumptions of Theorem~\ref{thm:stab} are valid.

We now apply Theorem~\ref{thm:stab}. 
To that end, 
	let $G=0$ 
		be 
	the defining equation 
		of 
		$\cM$ 
		around $\bar x$ 
			and 
	define the parametric Lagrangian function 
$$\mathcal{L}(x,y,\lambda):=F(y)+\frac{1}{2\mu}\|y-x\|^2+\langle G(y),\lambda\rangle.$$
Since $\bar x$ 
	is 
critical 
	for 
	$f$, 
	the equality $\bar x=\prox_{\mu f}(\bar x)$ 
		holds. 
Consequently, 
	$y(\bar x) = \bar x$
		minimizes 
	the function $y\mapsto F(y)+\frac{1}{2\mu}\|y-\bar x\|^2$ 
		on 
		$\cM$. 
Therefore, 
	first-order optimality conditions
		guarantee
	there 
		exists 
	a multiplier vector $\bar \lambda$ 
		satisfying 
$$0=\nabla_y \cL(\bar x,\bar x,\bar \lambda)=\nabla F(\bar x)+\sum_{i\geq 1}\bar \lambda_i G_i(\bar x),$$
where $G_i(\cdot)$ 
	are 
the coordinate functions 
	of 
	$G(\cdot)$.
Appealing to Theorem~\ref{thm:stab}, 
	we 
		learn 
	both 
		that  
			$f_{\mu}$ 
				is 
			$C^2$-smooth 
				around 
				$\bar x$ 
			and 
		that its Hessian 
			satisfies 
\begin{equation}\label{hess_est_prox_point}
\langle\nabla^2 f_{\mu}(\bar x) h,h \rangle=\min_{u\in T_{\cM}(\bar x)}~\langle H_{xx} h,h\rangle+2\langle H_{xy} u,h\rangle+\langle H_{yy} u,u\rangle,
\end{equation}
where the Hessian matrices 
	are 
given by 
$$H_{xx}=\mu^{-1} I,\qquad H_{xy}=-\mu^{-1} I, \qquad H_{yy}=\nabla^2 F(\bar x)+\sum_{i=1}^m \bar{\lambda}_i\nabla^2 G_i(\bar x)+\mu^{-1} I.$$
Thus 
	rearranging~\eqref{hess_est_prox_point} 
		and 
	setting $D:=\nabla^2 F(\bar x)+\sum_{i=1}^m \bar{\lambda}_i\nabla^2 G_i(\bar x),$ 	we 
		have
$$\langle\nabla^2 f_{\mu}(\bar x) h,h \rangle=\min_{u\in T_{\cM}(\bar x)}\left\{\langle Du,u\rangle+\mu^{-1} \|h-u\|^2\right\}.$$
Therefore, 
	we 
		arrive 
	at the estimate
\begin{align*}
\min_{h\in {\mathbb{S}}^{d-1}\cap T_{\cM}(\bar x)}\langle\nabla^2 f_{\mu}(\bar x) h,h \rangle&=\min_{u\in T_{\cM}(\bar x)}\min_{h\in {\mathbb{S}}^{d-1}\cap T_{\cM}(\bar x)}\left\{\langle Du,u\rangle+\mu^{-1} \|h-u\|^2\right\}\\
&\leq \min_{h\in \mathbb{S}^{d-1}\cap T_{\cM}(y_0)}\langle Dh,h\rangle =\min_{h\in {\mathbb{S}^{d-1}}\cap T_{\cM}(\bar x)} d^2 f_{\cM}(\bar x)(h),
\end{align*}
thereby verifying \eqref{eqn:mor_smooth}. 
If $\bar x$ 
	is 
a strict saddle point 
	of 
	$f$, 
	then \eqref{eqn:mor_smooth} 
		implies 
	that $\nabla^2 f_{\mu}(\bar x)$ 
		has 
	a strictly negative eigenvalue. 
From the expression $\prox_{\mu f}=I-\mu\nabla f_{\mu}$, 
	we therefore
		deduce 
	that the Jacobian 
		of 
		$\prox_{\mu f}$ at $\bar x$
			has 
		at least one real eigenvalue that is strictly greater than one. 
Consequently,
$\bar x$ 
	is
an unstable fixed point 
	of 
	$\prox_{\mu f}$.
\end{proof}

Even if the proximal mapping
	has 
an unstable fixed-point, 
	it 
		often fails 
	to meet the conditions of the center stable manifold theorem (Theorem~\ref{thm:center_stab}).
	Indeed, the proximal mapping is generally not injective, even near critical points. 
To remedy this issue, 
	we 
		instead analyze 
	a slightly damped version 
		of 
		the proximal point method
$$x_{k+1}=(1-\alpha) x_k +\alpha\cdot\prox_{\mu f}(x_k),$$
where $\alpha\in (0,1)$ 
	is 
a fixed constant.
Reinterpreting this algorithm in terms of the Moreau envelope, 
	we
		arrive 
	at the recurrence
\begin{equation}\label{eqn:grad_moreau_damp}
x_{k+1}=x_k- (\alpha\mu)\cdot\nabla f_{\mu}(x_k).
\end{equation}
Thus the role
	of 
	damping is clear: 
	it 
		still induces
	gradient descent on the Moreau envelope,
	but 
		with 
		a stepsize slightly below the ``theoretically optimal'' step $\mu$. 
This 
	is 
entirely inline 
	with 
	the saddle point escape guarantees 
		for 
		gradient descent 
			in 
			smooth minimization \cite{gradient_descent_jason}.

\begin{thm}[Proximal point method: global escape]\label{thm:prox-point}
Let $f\colon\R^d\to\R\cup\{\infty\}$ 
	be 
a closed 
	and 
$\rho$-weakly convex function 
	satisfying the strict saddle property. 
Choose 
	a constant $\mu<\rho^{-1}$ 
		and 
	a damping parameter $\alpha \in (0,\min\{1,(\mu\rho)^{-1}-1\})$. 
With these choices,  
	consider the algorithm
\begin{equation}\label{eqn:prox_point_damp}
x_{k+1}=(1-\alpha) x_k +\alpha\cdot\prox_{\mu f}(x_k).
\end{equation}
Then for almost all initializers $x_0$, 
the following holds:
	if the limit 
		of 
	$\{x_k\}_{k\geq 0}$ exists,
	it
		must be 
	a local minimizer 
		of 
		$f$.
\end{thm}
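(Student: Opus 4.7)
The strategy is to fit the damped iteration $T(x)=(1-\alpha)x+\alpha\prox_{\mu f}(x)$ into the framework of Corollary~\ref{cor:main_stab_cor}, using Theorem~\ref{thm:saddle_moreau} to certify strict saddles as unstable fixed points with invertible Jacobian, and then to read off the conclusion from the strict saddle property of $f$.

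\textbf{Step 1: $T$ is a lipeomorphism.} Using the identity $\prox_{\mu f}=I-\mu\nabla f_{\mu}$ from Lemma~\ref{lem_prop_moreau}, I would write $I-\prox_{\mu f}=\mu\nabla f_{\mu}$, which by claim~\ref{it3:moreau} of that lemma is Lipschitz with constant $L:=\mu\cdot\max\{\mu^{-1},\tfrac{\rho}{1-\mu\rho}\}=\max\{1,\tfrac{\mu\rho}{1-\mu\rho}\}$. The prescribed range $\alpha\in(0,\min\{1,(\mu\rho)^{-1}-1\})$ is exactly the condition $\alpha L<1$, since $(\mu\rho)^{-1}-1=\tfrac{1-\mu\rho}{\mu\rho}$. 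Hence by Lemma~\ref{lem:relax_and_split}\eqref{iit2}, $T$ is a lipeomorphism.

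\textbf{Step 2: Strict saddles are unstable fixed points of $T$ with invertible Jacobian.} Rewriting the iteration as $T(x)=x-\alpha\mu\nabla f_{\mu}(x)$, I would use Theorem~\ref{thm:saddle_moreau} to conclude that at any strict saddle $\bar x$ of $f$, the envelope $f_{\mu}$ is $C^2$-smooth near $\bar x$ and $\nabla^2 f_{\mu}(\bar x)$ possesses a strictly negative eigenvalue. Consequently $T$ is $C^1$ near $\bar x$ with $\nabla T(\bar x)=I-\alpha\mu\nabla^2 f_{\mu}(\bar x)$; the negative eigenvalue of the Hessian produces a real eigenvalue of $\nabla T(\bar x)$ strictly greater than one, so $\bar x$ is unstable. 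For invertibility, the $\mu^{-1}$-smoothness bound in Lemma~\ref{lem_prop_moreau}\eqref{it2:moreau} forces every eigenvalue of $\nabla^2 f_{\mu}(\bar x)$ to be at most $\mu^{-1}$, so every eigenvalue of $\nabla T(\bar x)$ is at least $1-\alpha>0$; hence $\nabla T(\bar x)$ is invertible. Therefore every strict saddle of $f$ lies in the set $\cU_T$ of Corollary~\ref{cor:main_stab_cor}.

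\textbf{Step 3: Assembling the global statement.} By Corollary~\ref{cor:main_stab_cor}, the set of initializers $x_0$ whose iterates converge into $\cU_T$ is Lebesgue-null. Fix any $x_0$ outside this null set for which $\lim_k x_k$ exists, call the limit $\bar x$. By continuity of $\prox_{\mu f}$ and Lemma~\ref{lem:relax_and_split}\eqref{iit4}, $\bar x$ is a fixed point of $\prox_{\mu f}$, hence a critical point of $f$ by Lemma~\ref{lem_prop_moreau}\eqref{it4:moreau}. The strict saddle property of $f$ now dichotomizes $\bar x$ into a local minimizer or a strict saddle; the latter is excluded since $\bar x\notin\cU_T$. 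Therefore $\bar x$ is a local minimizer of $f$, which completes the argument.

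\textbf{Where the work lies.} The genuinely nontrivial content is already packaged into Theorem~\ref{thm:saddle_moreau}: namely, that the active manifold together with weak convexity forces the Moreau envelope to be $C^2$-smooth near a saddle, and that the Hessian inherits a direction of negative curvature from $d^2 f_{\cM}(\bar x)$. Given that theorem, the remaining obstacle is purely bookkeeping — verifying the Lipschitz constants and invertibility bounds that tie the parameter range $\alpha\in(0,\min\{1,(\mu\rho)^{-1}-1\})$ simultaneously to lipeomorphism of $T$ and to invertibility of $\nabla T(\bar x)$ — which is the calculation executed above.
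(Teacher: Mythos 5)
Your proposal is correct and follows essentially the same route as the paper: establish that $T$ is a lipeomorphism via the Lipschitz constant $\max\{1,\tfrac{\mu\rho}{1-\mu\rho}\}$ of $I-\prox_{\mu f}=\mu\nabla f_{\mu}$ together with Lemma~\ref{lem:relax_and_split}, use Theorem~\ref{thm:saddle_moreau} to make every strict saddle an unstable fixed point, and invoke Corollary~\ref{cor:main_stab_cor} plus the strict saddle dichotomy at the limit point. Your explicit check that $\nabla T(\bar x)=I-\alpha\mu\nabla^2 f_{\mu}(\bar x)$ is invertible (eigenvalues at least $1-\alpha$ by the $\mu^{-1}$-smoothness bound) is a welcome detail the paper leaves implicit, but it does not change the argument.
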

\begin{proof}
Define the map $S:=\prox_{\mu f}(x_k)$.
 Lemma~\ref{lem_prop_moreau} guarantees that the map $I-S=\mu\nabla f_{\mu}$ is Lipschitz continuous with constant 
$\max\{1,\frac{\mu\rho}{1-\mu\rho}\}$. Taking into account the range of $\alpha$ and 
%
%Define $T:=I-(\alpha\mu)\nabla f_{\mu}$. 
%Using the expression \eqref{eqn:grad_moreau_damp}, 
%	we 
%		may equivalently write
%	$x_{k+1}=T(x_k)$ 
%		for
%		each index $k\geq 0$. 
%Using Lemma~\ref{lem_prop_moreau} 
%	and 
%taking into account the range of $\alpha$,
%	we 
%		arrive
%	at the estimate $\Lip(\alpha \mu\cdot\nabla f_{\mu})<1$. 
%Consequently, 
applying
	Lemma~\ref{lem:relax_and_split} 
		and
	Theorem~\ref{thm:saddle_moreau} 
		we 
			may deduce
		the following three properties: 
		(1) $T$ is a lipeomorphism, 
		(2)   the limit 
				of
				the sequence $x_k$, 
					if it exists, 
					must be 
			a critical point of $f$, 			
				and 
		(3) if a critical point
			of 
			$f$ 
				is
			not a local minimum,
			then it
				is
			an unstable fixed point of $T$. 
An application 
	of 
	Corollary~\ref{cor:main_stab_cor} 
		completes 
	the proof.
 \end{proof}

\section{The Proximal Gradient Method}\label{sec:prox_grad}
We now turn to the saddle escape properties 
	of 
	the proximal gradient method. 
Fixing the problem at hand, 
	we
		consider
\begin{equation}\label{eqn:add_comp}
\min_{x\in\R^d}~ f(x)=g(x)+r(x),
\end{equation}
where $g\colon\R^d\to\R$ 
	is 
a $C^2$-smooth function 
	with 
	$\beta$-Lipschitz gradient 
		and 
$r\colon\R^d\to\R\cup\{+\infty\}$ 
		is 
	a closed 
		and 
	$\rho$-weakly convex function. 
We assume throughout that $f$ 
	is bounded 
from below. 
For this problem, 
the proximal gradient method
	takes 
the form
$$x_{k+1}=\prox_{\mu r}\left(x_k-\mu\nabla g(x_k)\right).$$
Unlike the proximal point algorithm, 
	the proximal gradient algorithm
		may not 
	correspond to gradient descent on a smooth envelope of the problem.
Still, as the following theorem shows, 
	the iteration mapping 
	is 
$C^1$ smooth near $\bar x$ 
	whenever
$f$ 
	admits 
	a $C^2$ active manifold 
		around 
		a critical point $\bar x$.
Moreover, 
	if $\bar x$ 
		is 
	a strict saddle point 
		of 
		$f$, 
	then $\bar x$ 
		is 
	an unstable fixed point 
		of  
	the iteration mapping

\begin{thm}[Unstable fixed points of the prox-gradient map]\label{thm:saddle_prox_grad}
	Consider the optimization problem~\eqref{eqn:add_comp} 
		and 
	let $\bar x$ 
		be 
	any critical point 
		of 
		$f$. 
Suppose that $f$
	admits 
a $C^2$ active manifold 
	$\cM$ 
	at $\bar x$. 
Then
	for any $\mu\in(0,\rho^{-1})$, 
	the proximal-gradient map 
	$$S(x):=\prox_{\mu r}\left(x-\mu\nabla g(x)\right)$$
		is 
	$C^1$-smooth 
		on 
		a neighborhood 
			of 
			$\bar x$.  
Moreover, 
	if $\bar x$ 
		is 
	a strict saddle point 
		of 
		$f$, 
	then 
		$\nabla S(\bar x)$
			has 
		a real eigenvalue that is strictly greater than one.
\end{thm}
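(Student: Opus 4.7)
The proof will mirror Theorem~\ref{thm:saddle_moreau}: first identify that $S$ lands in the active manifold $\cM$ near $\bar x$, then recast $S$ as a smooth solution map via the perturbation result Theorem~\ref{thm:stab}, and finally translate the strict saddle curvature into an eigenvalue of $\nabla S(\bar x)$ exceeding one. The identification step is a short subgradient chase: the first-order optimality condition for the proximal subproblem defining $S(x)$ yields $\mu^{-1}(x-S(x))-\nabla g(x)\in \partial r(S(x))$, and adding $\nabla g(S(x))$ to both sides produces
$$v(x):=\mu^{-1}(x-S(x))+\nabla g(S(x))-\nabla g(x)\in \partial f(S(x)).$$
Lipschitz continuity of $\prox_{\mu r}$ and $\nabla g$ gives continuity of $S$ at $\bar x$ with $S(\bar x)=\bar x$, while criticality $0\in\partial f(\bar x)$ forces $v(x)\to 0$ as $x\to\bar x$. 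The sharpness condition of the active manifold then forces $S(x)\in\cM$ for all $x$ in a neighborhood of $\bar x$.

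Let $R\colon\R^d\to\R$ be any $C^2$ extension of $r$ from $\cM\cap U$ to a neighborhood $U$ of $\bar x$, obtained via a partition of unity. The identification step allows me to rewrite
$$S(x)=\argmin_{y\in\cM}\left\{R(y)+\langle\nabla g(x),y-x\rangle+\tfrac{1}{2\mu}\|y-x\|^2\right\}\qquad\textrm{for }x\textrm{ near }\bar x.$$
I would then apply Theorem~\ref{thm:stab} to this parametric problem. The inner objective is uniformly $(\mu^{-1}-\rho)$-strongly convex in $y$ (using $r$ in place of $R$), and its unconstrained minimizer $\prox_{\mu r}(x-\mu\nabla g(x))$ varies continuously; so Lemma~\ref{lem:checkcond} supplies both the level-boundedness and the strong minimizer conditions required by Theorem~\ref{thm:stab}. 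The theorem then certifies $C^1$ smoothness of $S$ near $\bar x$ and delivers an explicit formula for $\nabla S(\bar x)\,h$ as the unique minimizer in $v\in T_{\cM}(\bar x)$ of the quadratic form built from the cross Hessians of the parametric Lagrangian $\cL(x,y,\lambda)=R(y)+\langle \nabla g(x),y-x\rangle+\tfrac{1}{2\mu}\|y-x\|^2+\langle G(y),\lambda\rangle$ at $(\bar x,\bar x,\bar\lambda)$. Direct computation gives $H_{xy}=\nabla^2 g(\bar x)-\mu^{-1} I$ and $H_{yy}=D+\mu^{-1} I$, where $D:=\nabla^2 R(\bar x)+\sum_i \bar\lambda_i \nabla^2 G_i(\bar x)$ and $\bar\lambda$ is the (unique) multiplier vector for the original manifold-restricted problem $\min_{y\in\cM} r(y)+g(y)$ at $\bar x$.

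Denoting the orthogonal projection onto $T_{\cM}(\bar x)$ by $P$ and noting that the image of $\nabla S(\bar x)$ lies in $T_{\cM}(\bar x)$, solving the first-order condition for the quadratic minimization and simplifying produces
$$I-\nabla S(\bar x)\big|_{T_{\cM}(\bar x)}=\mu\,(I+\mu PDP)^{-1}\,P(D+\nabla^2 g(\bar x))\big|_{T_{\cM}(\bar x)}.$$
The main obstacle is the final spectral step. Strong convexity of the inner problem together with the second-order sufficient conditions at $\bar x$ ensures that $(I+\mu PDP)|_{T_{\cM}(\bar x)}$ is positive definite, hence so is its inverse. On the other hand, the strict saddle hypothesis delivers a vector $u\in T_{\cM}(\bar x)$ with $\langle (D+\nabla^2 g(\bar x))u,u\rangle=d^2 f_{\cM}(\bar x)(u)<0$, so the symmetric operator $P(D+\nabla^2 g(\bar x))|_{T_{\cM}(\bar x)}$ has a negative eigenvalue. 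A standard congruence argument (noting that $AB$ is similar to $A^{1/2}BA^{1/2}$) shows that the product of a positive definite operator and a symmetric operator possessing a negative eigenvalue has a real negative eigenvalue. Therefore $I-\nabla S(\bar x)|_{T_{\cM}(\bar x)}$ has a real negative eigenvalue, so $\nabla S(\bar x)|_{T_{\cM}(\bar x)}$ has a real eigenvalue strictly greater than one. Since the image of $\nabla S(\bar x)$ lies in $T_{\cM}(\bar x)$, any eigenvector for a nonzero eigenvalue of $\nabla S(\bar x)$ must lie in $T_{\cM}(\bar x)$, so the eigenvalue we exhibited is also an eigenvalue of $\nabla S(\bar x)$ on the full space.
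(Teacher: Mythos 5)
Your proposal is correct and follows essentially the same route as the paper: identification of $S(x)\in\cM$ near $\bar x$ via a subgradient estimate, replacement of $r$ by a $C^2$ extension $R$, application of the perturbation result (Theorem~\ref{thm:stab}) with Lemma~\ref{lem:checkcond}, and extraction of the Jacobian formula $\nabla S(\bar x)=-\overline{H_{yy}}^{-1}\overline{H_{xy}}$ on $T_{\cM}(\bar x)$. The only (cosmetic) difference is the final spectral step, where you factor $I-\nabla S(\bar x)|_{T_{\cM}(\bar x)}$ as a positive definite operator times a symmetric operator with a negative eigenvalue and invoke similarity to $A^{1/2}BA^{1/2}$ plus Sylvester inertia, whereas the paper reaches the same conclusion by continuity of the minimal eigenvalue along the ray $\gamma\overline{H_{yy}}+\overline{H_{xy}}$, $\gamma\geq 1$.
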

\begin{proof}
It is well-known (for example from \cite{hare_alg}) that for all $x$ near $\bar x$, the point $S(x)$ lies in $\cM$. 
From this inclusion, 
	we 
		will be able to view 
	the proximal subproblem 
		through
	the lens of the perturbation result in Theorem~\ref{thm:stab}.
For the sake of completeness, however, we provide a quick proof.
Indeed, 
	consider a sequence $x_i\to\bar x$ 
		and 
	set $y_i=S(x_i)$. 
Then by definition 
	of 
	the proximal gradient map, 
	we 
		have
	$0\in \nabla g(x_i)+\mu^{-1}(y_i-x_i)+\partial r(y_i),$
		and 
	therefore
	\begin{align*}
	\dist(0,\partial f (y_i))=\dist(-\nabla g(y_i),\partial r (y_i))&\leq \dist(-\nabla g(x_i),\partial r (y_i))+\beta\|y_i-x_i\|\\
	&\leq (\mu^{-1}+\beta)\|y_i-x_i\|.
	\end{align*}
	Since $S(\cdot)$ 
		is 
			continuous 
				and 
		$S(\bar x)=\bar x$, 
		we 
			deduce $y_i\to\bar x$ 
				and 
			therefore $\dist(0,\partial f (y_i))\to 0$. 
Therefore the points $y_i$ 
	lie 
in $\cM$ 
	for 
	all sufficiently large indices $i$, proving the claim. 
	
Turning to the perturbation result, 	
	let $R\colon\R^d\to\R$ 
		be 
	any $C^2$-smooth function 
		agreeing 
		with $r$ 
			on 
			a neighborhood 
				of 
				$\bar x$ 
				in $\cM$. 
Applying the claim, 
	we find that
		for 
	$x$ near $\bar x$, 
	the point $S(x)$
		uniquely 
	minimizes problem
	\begin{gather}
	\min_{y\in \cM}~ \left\{g(x)+\langle\nabla g(x),y-x\rangle+R(y)+\frac{1}{2\mu}\|y-x\|^2\right\}.\tag{$\mathcal{P}_{x}$}
	\end{gather}
Our goal
	is 
to apply the perturbation result (Theorem~\ref{thm:stab}) 
	with 
	$f(x,y):=g(x)+\langle\nabla g(x),y-x\rangle+R(y)+\frac{1}{2\mu}\|y-x\|^2$. 
 To that end, 
 	we 
		now verify 
	the assumptions 
		of
		 Theorem~\ref{thm:stab}. 
First we verify the quadratic growth condition: since we have chosen $\mu<\rho^{-1}$, it follows that for every $x\in\R^d$ the function $y\mapsto f(x,y)$ is strongly convex with the constant $\mu^{-1}-\rho$. 
Next 
	we 
		verify 
	the level-boundedness condition: 
	since the minimizer $S(x)$ clearly 
		varies continuously 
			and 
		satisfies $S(\bar x)=\bar x$, 
	the conditions of Lemma~\ref{lem:checkcond} 
		are 
	satisfied. 
Therefore, the assumptions 
	of 
	Theorem~\ref{thm:stab} 
		are 
	valid. 
	
We now apply Theorem~\ref{thm:stab}. 
To that end, 
let $G=0$ 
	be
the defining equation 
	of 
	$\cM$ 
	around 
	$\bar x$ 
		and 
define the parametric Lagrangian function 
	$$\mathcal{L}(x,y,\lambda)=g(x)+\langle\nabla g(x),y-x\rangle+R(y)+\frac{1}{2\mu}\|y-x\|^2+\sum_{i\geq 1}\lambda_i G_i(y),$$
	where $G_i(\cdot)$ are the coordinate functions of $G$. 
Clearly $y(\bar x) = \bar x$
	minimizes 
$f(\bar x,\cdot)$ 
	on 
	$\cM$.	
Therefore, 
	first-order optimality conditions 
		guarantee 
	there 
		exists 
	a multiplier vector $\bar \lambda$ satisfying 
$$0=\nabla_y \cL(\bar x,\bar x,\bar \lambda)=\nabla g(\bar x)+\nabla R(\bar x)+\sum_{i\geq 1}\bar \lambda_i G_i(\bar x).$$
Appealing to Theorem~\ref{thm:stab}, 
	we 
		learn 
	that the solution map $S(\cdot)$ 
		is 
	$C^1$-smooth 
		around 
		$\bar x$ 
			with 
	\begin{equation}\label{eqn:hss_prox_grad}
	\nabla S(\bar x)h=\argmin_{v\in T_{\cM}(\bar x)}~2\langle H_{xy} v,h\rangle+\langle H_{yy} v,v\rangle,
	\end{equation}
	where the Hessian matrices 
		are given 
	by 
	$$H_{xy}=\nabla^2 g(\bar x) -\mu^{-1} I, \qquad H_{yy}=\nabla^2 R(\bar x)+\mu^{-1} I+\sum_{i=1}^p \bar{\lambda}_i\nabla^2 G_i(\bar x).$$
We now
	simplify 
the expression~\eqref{eqn:hss_prox_grad}.
To that end,
	let $W$ 
		be 
	the orthogonal projection 
		onto 
		$T_{\cM}(\bar x)$ 
			and 
	define the linear maps 
		$\overline{H_{yy}}\colon T_{\cM}(\bar x)\to T_{\cM}(\bar x)$ 
			and 
		$\overline{H_{xy}}\colon T_{\cM}(\bar x)\to T_{\cM}(\bar x)$ 
			by 
				setting 
					$\overline{H_{yy}}=W H_{yy}W$ 
						and 
					$\overline{H_{xy}}=W H_{xy}W$, 
					respectively. 
Since $\bar x$ 
	is 
a strong local minimizer 
	of $\mathcal{P}_{\bar x}$, 
	the map $\overline{H_{yy}}$ 
		is 
	positive definite, 
		and hence 
	invertible. 
Solving \eqref{eqn:hss_prox_grad}
	then yields 
the expression   
	\begin{equation*}
	\nabla S(\bar x)h=-\overline H_{yy}^{-1}\overline{H_{xy}}^\top h \qquad \textrm{for all }h\in T_{\cM}(\bar x).
	\end{equation*}
Note that $\overline{H_{xy}}^\top$ is a symmetric matrix, so we drop the ``$\top$" throughout.
	
	Let us 
		now verify 
	that 
		if $\bar x$ 
			is 
		a strict saddle 
			of 
			$f$, 
		then $\nabla S(\bar x)$ 
			has a real eigenvalue that is greater than one.
	To this end, 
		observe that $\gamma\in\R$ 
			is 
		a real eigenvalue of $\nabla S(\bar x)$ 
			with 
			an associated eigenvector $v\in T_{\cM}(\bar x)$ 
				if and only if 
	$$\nabla S(\bar x)v=\gamma v\qquad \Longleftrightarrow\qquad -\overline H_{yy}^{-1}\overline{H_{xy}} v=\gamma v\qquad \Longleftrightarrow\qquad (\gamma\overline H_{yy} + \overline{H_{xy}}) v=0.$$
In particular 
	if the matrix $\gamma\overline H_{yy} + \overline{H_{xy}}$ 
		is 
	singular, 
	then $\gamma$ 
		is 
	an eigenvalue 
		of 
		$\nabla S(\bar x)$.
To prove  
	such a $\gamma$ exists, 
we 
	will examine
the following ray of symmetric matrices 
	$$\{\gamma\overline H_{yy} + \overline{H_{xy}}: \gamma\geq 1\}.$$
Beginning with the end point, 
	the strict saddle property
		shows 
	that 
	$$\overline H_{yy} + \overline{H_{xy}}=W\left(\nabla^2 g(\bar x)+\nabla^2 R(\bar y)+\sum_{i}\bar{\lambda}_i \nabla^2 G_i(\bar x)\right)W.$$
has a strictly negative eigenvalue. 
On the other hand, 
	the direction 
		of 
		the ray $\overline H_{yy}$ 
			is 
			a positive definite matrix. 
Therefore 
	by continuity 
		of 
		the minimal eigenvalue function, 
	there 
		exists 
	some $\gamma>1$ such that the matrix $\gamma\overline H_{yy} + \overline{H_{xy}}$ is singular, as claimed.
\end{proof}

Similar to the proximal point method, 
	the proximal gradient mapping 
		fails to meet
	the conditions 
		of 
		the center stable manifold theorem (Theorem~\ref{thm:center_stab}), 
	since it generally lacks invertibility. 
Therefore as before
	we will analyze 
a slightly damped version 
	of 
	the process, and prove the following theorem.

\begin{thm}[Proximal gradient method: global escape]\label{thm:prox-gradient}
Consider the optimization problem~\eqref{eqn:add_comp} 
	and
suppose that $f$ 
	has 
the strict saddle property.
Choose any constant $\mu\in(0,\rho^{-1})$
	and 
a damping parameter $\alpha\in (0,1)$ satisfying
$$\alpha\cdot \left(\mu\beta+(1+\mu\beta)\max\left\{1,\tfrac{\mu\rho}{1-\mu\rho}\right\}\right)<1.$$
Consider the algorithm
\begin{equation}\label{eqn:prox_grad}
x_{k+1}=(1-\alpha) x_k +\alpha\cdot \prox_{\mu r}\left(x_k-\mu\nabla g(x_k)\right).
\end{equation}
Then for almost all initializers $x_0$, 
the following holds:
	if the limit 
		of 
	$\{x_k\}_{k\geq 0}$ exists,
	it
		must be 
	a local minimizer 
		of 
		$f$.
\end{thm}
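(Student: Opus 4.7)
The plan is to follow the template established for the proximal point method (Theorem~\ref{thm:prox-point}), namely to define the damped iteration map $T(x) := (1-\alpha)x + \alpha S(x)$ with $S(x) := \prox_{\mu r}(x - \mu \nabla g(x))$ and to invoke the globalization machinery of Corollary~\ref{cor:main_stab_cor}. To apply that corollary, I need two things: first, that $T$ is a lipeomorphism so that measure-zero sets pull back to measure-zero sets; and second, that every critical point of $f$ that is not a local minimizer is an unstable fixed point of $T$ at which $\nabla T$ is invertible. The latter is essentially packaged by Theorem~\ref{thm:saddle_prox_grad} combined with Lemma~\ref{lem:relax_and_split}(\ref{iit3}), while the former requires a quantitative Lipschitz estimate that precisely matches the constant appearing in the hypothesis on $\alpha$.

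The main computation is therefore to produce the Lipschitz constant of $I-S$. I would argue as follows. Write $u(x) := x - \mu \nabla g(x)$, so that $S = \prox_{\mu r} \circ u$, and split
\[
(I-S)(x) - (I-S)(y) = \bigl[\mu\nabla g(x) - \mu\nabla g(y)\bigr] + \bigl[(u(x)-u(y)) - (\prox_{\mu r}(u(x)) - \prox_{\mu r}(u(y)))\bigr].
\]
The first bracket has norm at most $\mu\beta\|x-y\|$ by $\beta$-smoothness of $g$. For the second bracket, use the identity $I - \prox_{\mu r} = \mu \nabla r_\mu$ from \eqref{eqn:grad_moreau}; by Lemma~\ref{lem_prop_moreau}(\ref{it3:moreau}) the map $\mu\nabla r_\mu$ is Lipschitz with constant $\max\{1, \tfrac{\mu\rho}{1-\mu\rho}\}$, and $u$ is itself $(1+\mu\beta)$-Lipschitz. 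Multiplying and summing yields
\[
\Lip(I-S) \leq \mu\beta + (1+\mu\beta)\max\!\left\{1,\tfrac{\mu\rho}{1-\mu\rho}\right\},
\]
which is exactly the reciprocal of the upper bound imposed on $\alpha$. Then Lemma~\ref{lem:relax_and_split}(\ref{iit2}) shows that $T$ is a lipeomorphism.

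With $T$ a lipeomorphism in hand, the rest is assembly. By Lemma~\ref{lem:relax_and_split}(\ref{iit1}) the fixed points of $T$ coincide with those of $S$, which in turn coincide with the critical points of $f$ (standard from the optimality conditions of the prox-gradient subproblem). By part~(\ref{iit4}) of that lemma, any limit of the iterates must be such a fixed point. If $\bar x$ is a critical point that is not a local minimizer, then by the strict saddle assumption $\bar x$ is a strict saddle, so Theorem~\ref{thm:saddle_prox_grad} gives that $S$ is $C^1$-smooth near $\bar x$ and that $\nabla S(\bar x)$ admits a real eigenvalue strictly greater than one; Lemma~\ref{lem:relax_and_split}(\ref{iit3}) transfers this to $T$, making $\bar x$ an unstable fixed point of $T$. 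Moreover, $\nabla T(\bar x) = (1-\alpha)I + \alpha \nabla S(\bar x)$ is invertible since $T$ itself is a lipeomorphism (or directly: the inverse of a differentiable lipeomorphism has a differentiable inverse at every point). Applying Corollary~\ref{cor:main_stab_cor} to the set of unstable fixed points of $T$ yields a measure-zero set of bad initializers, completing the proof.

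The only delicate step is the Lipschitz estimate above; everything else is a mechanical rerun of the proximal point argument. I expect no conceptual obstacle, since the decomposition $I - S = \mu\nabla g + \mu\nabla r_\mu \circ u$ directly produces the stated constant, and the hypothesis on $\alpha$ is tailored precisely to this bound.
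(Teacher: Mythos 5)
Your proposal is correct and follows essentially the same route as the paper: the identity $I-S=\mu\nabla g+\mu\nabla r_{\mu}\circ(I-\mu\nabla g)$ together with Lemma~\ref{lem_prop_moreau} gives exactly the Lipschitz constant $\mu\beta+(1+\mu\beta)\max\{1,\tfrac{\mu\rho}{1-\mu\rho}\}$ for $I-S$, after which Lemma~\ref{lem:relax_and_split}, Theorem~\ref{thm:saddle_prox_grad}, and Corollary~\ref{cor:main_stab_cor} are assembled just as in the paper's proof. Your extra remark on invertibility of $\nabla T(\bar x)$ is fine via the bi-Lipschitz lower bound (though the parenthetical claim that a differentiable lipeomorphism has a differentiable inverse everywhere is not needed and not quite accurate); the paper leaves this point implicit.
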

\begin{proof}
Define the maps $S=\prox_{\mu r}\left(I-\mu\nabla g\right)$.
We successively rewrite
\begin{align*}
I-S&=(I-\mu\nabla g)-\prox_{\mu r}\left(I-\mu\nabla g\right)+\mu\nabla g\\
&=\mu\cdot\nabla r_{\mu}\circ(I-\mu\nabla g)+\mu\nabla g.
\end{align*}
Lemma~\ref{lem_prop_moreau} implies that the map $I-S$ is Lipschitz continuous with constant $\mu\beta+(1+\mu\beta)\max\left\{1,\tfrac{\mu\rho}{1-\mu\rho}\right\}$. 
%
%We first
%	verify 
%the map
%$$T(x):=(1-\alpha)x+\alpha\cdot \prox_{\mu r}\left(x-\mu\nabla g(x)\right).$$
%	is 
%a lipeomorphism.
%To this end, 
%	observe 
%\begin{align*}
%T&=I+\alpha ( \prox_{\mu r}\left(I-\mu\nabla g\right)-I)\\
%&=I+\alpha ( \prox_{\mu r}\left(I-\mu\nabla g\right)-(I-\mu\nabla g)-\mu\nabla g)\\
%&=I-\alpha (\mu\cdot\nabla r_{\mu}\circ(I-\mu\nabla g)+\mu\nabla g).
%\end{align*}
Taking into account the range of $\alpha$
	and applying
	Lemma~\ref{lem:relax_and_split} 
		and
	Theorem~\ref{thm:saddle_prox_grad} 
		we 
			may deduce
		the following three properties: 
		(1) 
		$T$ 
			is 
		a lipeomorphism, 
		(2)  
		the limit of the sequence $x_k$, if it exists, 
			must be 
		a critical point 
			for 
			$f$, 
		and 
		(3) if a critical point
			of 
			$f$ 
				is
			not a local minimum,
			then it
				is
			an unstable fixed point of $T$.  An application of Corollary~\ref{cor:main_stab_cor} then completes the proof. \end{proof}

\section{The Proximal Linear Method}\label{sec:prox_linear}
We now 
	turn to 
the saddle escape properties 
	of 
	the proximal linear method, 
	a generalization 
		of 
			the 
				proximal point 
				and 
				proximal gradient 
			methods. 
Setting the stage, 
consider 
	the composite optimization problem 
\begin{equation}\label{eqn:conv_comp}
\min_{x}~ f(x)=h(F(x))+r(x),
\end{equation} 
 where $F\colon\R^d\to\R^m$ 
 	is 
	a $C^2$-smooth map, 
	$h\colon\R^d\to\R$ 
		is 
	convex, 
		and 
	$r\colon\R^d\to\R\cup\{\infty\}$ 
		is 
	$\rho$-weakly convex.  
As 
	is 
standard 
	in 
	the literature, 
	we 
		will 
	assume that 
		there exists 
	a constant $\beta>0$ satisfying 
 \begin{equation}\label{eqn:quadrapprox}
 \left|h(F(y))-h(F(x)+\nabla F(x)(y-x))\right|\leq \frac{\beta}{2}\|y-x\|^2, \qquad \forall x, y \in \RR^d.
 \end{equation}
These assumptions 
	then 
		easily imply
	that $f$ 
		is 
	weakly convex 
		with 
		constant $\beta+\rho$. 

With the stage set, 
	we 
		now slightly refine
	the notion 
		of 
		a strict saddle, 
	adapting it to the compositional nature 
		of 
		the problem. 
This refinement 
	intuitively 
		asks 
	that the active manifold 
		for 
		$f$ 
			at 
			a critical point $\bar x$  
				is induced 
		by active manifolds 
			of 
				$h$ 
					and 
				$r$. 
Similar conditions 
	have appeared 
elsewhere, 
	for example, 
		in 
		\cite{ident,prox,lewis_active}. 
To describe the condition formally, 
	we 
		will also revise
	the definition 
		of 
		an active manifold, 
	allowing us to discuss active manifolds 
		of 
			$h(\cdot)$ 
				and 
			$r(\cdot)$ 
		at 
		noncritical points.
The revision 
	is 
intuitive,
requiring
	just a linear tilt 
		of 
		the functions: 
  \begin{itemize}
  	\item 
  	Consider 
		a set $\cR\subset\R^d$, 
		a point $x\in\cR$, 
			and 
		a subgradient $v\in \partial r(x)$. 
	We 
		will say 
	that $\cR$ 
		is 
		a {\em 
			$C^2$ active manifold 
				of 
				$r$ 
					at 
					$x$ 
						for 
						$v$} 
		if $\cR$
			is 
		a $C^2$ active manifold 
			of 
			the tilted function $r-\langle v,\cdot\rangle$ 
				at 
				$x$ 
					in 
					the sense 
						of 
						Definition~\ref{defn:ident_man}. 
\end{itemize}
We may 
	likewise define
the active manifold 
	of 
	$h$ 
		at 
		$z$ 
			for 
		$w\in \partial h(z)$, 
	based on a tilting of $h$ by $w$. 
Coupling these definitions,
	we arrive 
at the active manifold concept 
	for 
	the composite problem \eqref{eqn:conv_comp}.

 \begin{definition}[Composite active manifold]\label{defn:comp_act_man}{\rm
 Consider the compositional problem \eqref{eqn:conv_comp} 
 	and 
let $\bar x$ 
	be 
a critical point 
	of 
	$f$. 
Fix arbitrary vectors 
	$\bar w\in \partial h(F(\bar x))$ 
		and 
	$\bar v\in \partial r(\bar x)$
satisfying
\begin{equation}\label{eqn:subdif_incl}
 0\in \nabla F(\bar x)^*\bar w+\bar v.
 \end{equation}
Suppose the following hold.
\begin{enumerate}
 \item\label{it1:make} 
 	There 
		exist 
	$C^2$-smooth manifolds 
		$\mathcal{R}\subset\R^d$ 
			and  
		$\mathcal{H}\subset\R^m$ 
	containing 
		$\bar x$ 
			and 
		$F(\bar x)$, 
		respectively, 
			and 
	satisfying the transversality condition: 
\begin{equation}\label{eqn:transversal}
\nabla F(\bar x)\left[T_{\mathcal{R}}(\bar x)\right]+T_{\mathcal{H}}(F(\bar x))=\R^m.
\end{equation}
\item\label{it2:make} 
	$\mathcal{R}$ 
		is 
	an active manifold 
		of 
		$r$ 
			at 
			$\bar x$ 
				for 
				$\bar v$ 
					and 
	$\mathcal{H}$ 
		is 
	an active manifold 
		of 
		$h$ 
			at 
			$F(\bar x)$ 
				for 
				$\bar w$. 
\end{enumerate}
 Then 
 	we will call 
$\cM:=\mathcal{R}\cap F^{-1}(\mathcal{H})$ a {\em composite $C^2$ active manifold}
	for 
	the problem \eqref{eqn:conv_comp} 
		at 
		$\bar x$. 
If in addition the inequality $d^2 f_{\cM}(\bar x)(u)<0$ 
	holds 
for some vector $u\in T_{\cM}(\bar x)$, 
	then 
		we will call 
	$\bar x$ a {\em composite strict saddle point}.}
 \end{definition}
 
This definition
	has
several important subtleties.
First, 
the set $\cM:=\mathcal{R}\cap F^{-1}(\mathcal{H})$ 
	is 
indeed a $C^2$-smooth manifold 
	around 
	$\bar x$, 
	due to the classical transversality condition \eqref{eqn:transversal},
	a central fact 
		in 
		differential geometry \cite[Theorem 6.30]{lee2013smooth}. 
Next, the vectors
	$\bar v$
		and 
	$\bar w$ 
	do exist.
This follows since $\bar x$
	is
first-order critical for $f$:
 $$0\in \nabla F(\bar x)^*\partial h(F(\bar x))+\partial r(\bar x).$$
Beyond existence,
 	the vectors
	$\bar v\in \partial r(\bar x)$ 
		and 
	$\bar w\in \partial h(F(\bar x))$ 
		are in fact
	the unique elements satisfying \eqref{eqn:subdif_incl}, 
	a second consequence of transversality. 
To see this, 
	we 
		state 
	\eqref{eqn:transversal}
		in
		dual terms as
\begin{equation}\label{eqn:dual_transv}
(\nabla F(\bar x)^*)^{-1}N_{\cR}(\bar x)\cap N_{\cH}(F(\bar x))=\{0\}.
\end{equation}
Considering another pair $v\in \partial r(\bar x)$ and $ w\in \partial h(F(\bar x))$ satisfying \eqref{eqn:subdif_incl}, 
	we deduce 
$$0=\nabla F^*(\bar x)(\bar w-w)+(\bar v-v).$$
To conclude
	$v=\bar v$ 
		and 
	$w=\bar w$, 
	we 
		use 
	\eqref{eqn:dual_transv}
			and
		simply recall 
	that 
		$\spann~\partial h(F(\bar x))=N_{\cH}(F(\bar x))$ 
			and 
		$\spann~\partial r(\bar x)=N_{\cR}(\bar x)$, as shown in \cite[Proposition 10.12]{darxiv}.
Finally, collecting these facts together,
	it 
		follows 
	from the chain rule \cite[Proposition 5.1]{darxiv} 
		that 
		$\cM$ 
			is 
		an active manifold 
			of 
			$f$ 
				at 
				$\bar x$ 
					in 
					the sense 
						of 
						Definition~\ref{defn:strict_saddle_conv}.

A natural question 
	is 
whether we expect the composite strict saddle property to hold typically. 
One supporting piece, 
	of 
	evidence, analogous to Theorem~\ref{thm:strict_saddle_hoora_gen}, 
		is 
	that the property holds under generic linear perturbations of semialgebraic composite problems.
This result 
	quickly follows 
from \cite[Theorem 5.2]{drusvyatskiy2016generic}. 
We 
	provide 
a proof sketch 
	in Section~\ref{sec:proof_generic}.

 \begin{thm}[Strict saddle property is generic]\label{thm:strict_saddle_hoora_gen2}
Consider the composite problem \eqref{eqn:conv_comp}, 
	where 
		$h$, 
		$r$, 
			and 
		$F$ 
			are 
	in addition semi-algebraic. 
Then 
	for 
	a full Lebesgue measure set 
		of 
		perturbations $(y,v)\in \R^m\times \R^d$, 
	the problem $$\min_x~ h(F(x)+y) + r(x) -\langle v,x\rangle$$ 
		has 
	the composite strict saddle property.
\end{thm}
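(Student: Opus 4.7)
The plan is to invoke \cite[Theorem 5.2]{drusvyatskiy2016generic}, which is the composite analogue of the generic active-manifold theorem behind Theorem~\ref{thm:strict_saddle_hoora_gen}, and then translate its output into the language of Definition~\ref{defn:comp_act_man}. Concretely, I would use that reference to produce a full Lebesgue measure set $\Omega\subset\R^m\times\R^d$ such that for every $(y,v)\in\Omega$ and every critical point $\bar x$ of
$$\varphi_{y,v}(x):=h(F(x)+y)+r(x)-\langle v,x\rangle,$$
there exist semi-algebraic $C^2$ active manifolds $\cR\ni\bar x$ of $r-\langle v,\cdot\rangle$ at $\bar x$ and $\mathcal{H}\ni F(\bar x)+y$ of $h$, satisfying the transversality condition \eqref{eqn:transversal}, and moreover the Hessian of $\varphi_{y,v}$ restricted to $\cM:=\cR\cap (F+y)^{-1}(\mathcal{H})$ is nondegenerate along $T_\cM(\bar x)$.

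With this in hand, items~\ref{it1:make} and~\ref{it2:make} of Definition~\ref{defn:comp_act_man} hold by construction, and the unique multipliers $\bar w$ and $\bar v$ solving \eqref{eqn:subdif_incl} are supplied by the dual form \eqref{eqn:dual_transv} of transversality, exactly as discussed after Definition~\ref{defn:comp_act_man}. Since $\varphi_{y,v}$ is $C^2$-smooth on $\cM$ near $\bar x$, local minimality versus saddle behavior is controlled by the restricted Hessian. The non-degeneracy conclusion from the reference then forces a dichotomy at every critical point: either the restricted Hessian is positive definite, in which case $\bar x$ is a strong local minimizer, or it has a strictly negative eigenvalue, yielding a direction $u\in T_\cM(\bar x)$ with $d^2(\varphi_{y,v})_\cM(\bar x)(u)<0$. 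This is precisely the composite strict saddle property.

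The main obstacle is confirming that \cite[Theorem 5.2]{drusvyatskiy2016generic} really supplies the Hessian non-degeneracy piece on top of the bare existence of active manifolds. If it provides only the latter, then the missing non-degeneracy would follow from a parametric semi-algebraic Sard argument applied to the $C^2$ map $(y,v,x)\mapsto\varphi_{y,v}(x)$ on the finite collection of smooth strata of the composite-critical-point correspondence; stratification ensures that only countably many smooth parametric families must be handled, so a Fubini argument then assembles the associated null sets into a single exceptional set. This is the same mechanism underlying the proof of Theorem~\ref{thm:strict_saddle_hoora_gen}, lifted from the single-function setting to the composite one.
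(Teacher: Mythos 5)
Your overall skeleton matches the paper's --- invoke \cite[Theorem 5.2]{drusvyatskiy2016generic} and repeat the argument used for Theorem~\ref{thm:strict_saddle_hoora_gen} --- but the step you flag as the ``main obstacle'' is precisely where all the work lies, and neither of your two ways around it closes the gap. The cited generic theorem does not assert nondegeneracy of the restricted Hessian; the paper itself remarks, just before Theorem~\ref{thm:strict_saddle_hoora_gen}, that the strict-saddle conclusion ``is not explicitly stated in the theorem statement'' of the reference. What the citation does supply at each critical point, besides the active manifolds and transversality, is \emph{metric regularity of the subdifferential} at $(\bar x,0)$, and the paper's actual mechanism is to convert this regularity into the curvature dichotomy: Lemma~\ref{lem:basic_lem} uses the coderivative formula of \cite[Corollary 2.9]{lewis2013partial} for $N_{\gph \partial f_{\cM}}$ to show that metric regularity forces $\inf_{u\in \mathbb{S}^{d-1}\cap T_{\cM}(\bar x)} d^2 f_{\cM}(\bar x)(u)\neq 0$, and Corollary~\ref{cor:cor_for_partsmooth} then yields ``strong local minimizer or strict saddle.'' Your proposal never uses metric regularity, so this bridge --- the substantive content of the proof --- is missing.

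Your fallback, a parametric Sard argument for the map $(y,v,x)\mapsto\varphi_{y,v}(x)$, is not a substitute as stated. Sard applied to the objective controls critical values, not degeneracy of the restricted Lagrangian Hessian at critical points; what one would actually need is a transversality argument for the gradient (equivalently, the subdifferential graph) of the restriction to manifolds that themselves depend on the unknown critical point and on the parameters $(y,v)$, which amounts to re-deriving the stratification machinery of \cite{drusvyatskiy2016generic} rather than using its conclusions. Separately, even granting nondegeneracy, your assertion that positive definiteness of the restricted Hessian settles local minimality of $\varphi_{y,v}$ requires the sharpness half of the active-manifold property: positive definiteness only gives strong local minimality of the restriction of $\varphi_{y,v}$ to $\cM$, and the passage to the full function is exactly \cite[Proposition 7.2]{ident}, as used in Corollary~\ref{cor:cor_for_partsmooth}. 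These last points are repairable, but as written the proof of the key dichotomy at each critical point is not there.
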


Turning to our central task, 
	we 
		aim 
	to analyze the saddle escape properties
		of 
		the proximal linear method:
$$x_{k+1}=\argmin_{y}~h(F(x_k)+\nabla F(x_k)(y-x_k))+r(y)+\frac{1}{2\mu}\|y-x_k\|^2.$$
To analyze this method, 
we 
	prove
the following theorem,  
	showing
that any strict saddle point 
	of 
	the composite problem \eqref{eqn:conv_comp} 
		is 
	an unstable fixed point 
		of 
		proximal linear update.

\begin{thm}[Unstable fixed points of the proximal linear map]\label{thm:saddle_prox_lin}
	Consider the composite problem~\eqref{eqn:conv_comp} 
		and 
	let $\bar x$ 
		be 
	any critical point 
		of 
		$f$. 
	Suppose the problem 
		admits 
	a composite $C^2$ active manifold $\cM$ 
		at 
		$\bar x$. 
 Then for any $\mu\in(0,\rho^{-1})$, 
 	the proximal linear map 
 \begin{equation}\label{eqn:prox_lin_map}
	S(x):=\argmin_{y}~h(F(x)+\nabla F(x)(y-x))+r(y)+\frac{1}{2\mu}\|y-x\|^2.
	\end{equation}
		is 
	$C^1$-smooth 
		on 
		a neighborhood 
			of $\bar x$. 
			Moreover, if $\bar x$ is a composite strict saddle point, then the Jacobian $\nabla S(\bar x)$ has a real eigenvalue strictly greater than one.
\end{thm}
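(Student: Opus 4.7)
My plan is to mirror the three-step template of Theorems~\ref{thm:saddle_moreau} and~\ref{thm:saddle_prox_grad}: (i) show that the proximal linear map identifies the composite active manifold $\cM=\cR\cap F^{-1}(\mathcal{H})$ near $\bar x$, (ii) apply a parametric perturbation result to deduce $C^1$-smoothness of $S$ together with an explicit formula for $\nabla S(\bar x)$, and (iii) exhibit a real eigenvalue of $\nabla S(\bar x)$ strictly greater than one via a continuity argument on a matrix ray.

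For step (i), the subproblem defining $S(x)$ is strongly convex with constant $\mu^{-1}-\rho>0$, so $S$ is continuous with $S(\bar x)=\bar x$. First-order optimality furnishes subgradients $w(x)\in\partial h(F(x)+\nabla F(x)(S(x)-x))$ and $v(x)\in\partial r(S(x))$ satisfying $\nabla F(x)^{\ast}w(x)+v(x)+\mu^{-1}(S(x)-x)=0$. Letting $x\to\bar x$ and invoking the uniqueness of the dual pair $(\bar w,\bar v)$ granted by the dual transversality condition~\eqref{eqn:dual_transv}, I obtain $w(x)\to\bar w$ and $v(x)\to\bar v$. Identifiability (Definition~\ref{defn:ident_man}), applied separately to the tilts $h-\langle\bar w,\cdot\rangle$ and $r-\langle\bar v,\cdot\rangle$, then forces $S(x)\in\cR$ and $F(x)+\nabla F(x)(S(x)-x)\in\mathcal{H}$ for all $x$ near $\bar x$, so in particular $S(x)\in\cM$.

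For step (ii), let $G_{\cR}=0$ and $G_{\mathcal{H}}=0$ be local defining equations for $\cR$ at $\bar x$ and for $\mathcal{H}$ at $F(\bar x)$, and let $R$, $H$ be $C^2$-smooth extensions of $r|_{\cR}$ and $h|_{\mathcal{H}}$. By step (i), for $x$ near $\bar x$ the point $S(x)$ uniquely solves
\begin{equation*}
\min_y~ H(F(x)+\nabla F(x)(y-x))+R(y)+\tfrac{1}{2\mu}\|y-x\|^2 \quad \text{subject to} \quad G_{\cR}(y)=0,\; G_{\mathcal{H}}(F(x)+\nabla F(x)(y-x))=0.
\end{equation*}
Strong convexity (constant $\mu^{-1}-\rho$) together with Lemma~\ref{lem:checkcond} supply both the quadratic growth and level-boundedness hypotheses of the parametric perturbation result underlying Theorem~\ref{thm:stab}, which accommodates smoothly varying equality constraints. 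Letting $\cL(x,y,\lambda,\nu)$ denote the parametric Lagrangian, with unique multiplier pair $(\bar\lambda,\bar\nu)$ at $(\bar x,\bar x)$ by the transversality discussion following Definition~\ref{defn:comp_act_man}, one obtains $C^1$-smoothness of $S$ near $\bar x$ together with the formula
\begin{equation*}
\nabla S(\bar x)\,h=-\overline{H_{yy}}^{-1}\,\overline{H_{xy}}\,h\qquad\text{for all } h\in T_{\cM}(\bar x),
\end{equation*}
where $\overline{H_{xy}},\overline{H_{yy}}$ are the compressions to $T_{\cM}(\bar x)$ of the mixed and $y$-partial Hessians of $\cL$ at $(\bar x,\bar x,\bar\lambda,\bar\nu)$, with $\overline{H_{yy}}$ positive definite by strong convexity.

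For step (iii), a real $\gamma$ is an eigenvalue of $\nabla S(\bar x)$ exactly when $\gamma\,\overline{H_{yy}}+\overline{H_{xy}}$ is singular. Since $\overline{H_{yy}}$ is positive definite, this symmetric matrix is positive definite for all large $\gamma$, so continuity of the smallest eigenvalue produces some $\gamma_{*}>1$ at which it becomes singular, provided the endpoint $\overline{H_{yy}}+\overline{H_{xy}}$ has a strictly negative eigenvalue. The main obstacle is thus the algebraic identity
\begin{equation*}
\langle(\overline{H_{yy}}+\overline{H_{xy}})u,u\rangle=d^2 f_{\cM}(\bar x)(u)\qquad\text{for all } u\in T_{\cM}(\bar x),
\end{equation*}
whose verification requires unwinding $\cL$: the $\mu^{-1}I$ contributions to $H_{yy}$ and $H_{xy}$ cancel on addition, while differentiating the linearization $F(x)+\nabla F(x)(y-x)$ simultaneously in $x$ and $y$ at the diagonal $y=x=\bar x$ reconstructs the second-order contribution of $h\circ F$ that was removed by linearization. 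Combined with the Hessians of $R$ and the constraint terms weighted by $(\bar\lambda,\bar\nu)$, the sum reassembles the Hessian along $\cM$ of the original composite Lagrangian for $h(F(y))+r(y)$, whose restriction to $T_{\cM}(\bar x)$ computes precisely $d^2 f_{\cM}(\bar x)$. The composite strict saddle hypothesis then supplies a direction of negative curvature, closing the argument.
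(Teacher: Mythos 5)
Your proposal follows the paper's proof essentially step for step: manifold identification via the subproblem's optimality conditions plus uniqueness of the multiplier pair $(\bar w,\bar v)$ from transversality, the constrained parametric perturbation theorem (the paper's Theorem~\ref{thm:stab2}) applied to $C^2$ extensions of $h|_{\mathcal{H}}$ and $r|_{\cR}$, and the ray argument hinging on the identity that $H_{yy}+H_{xy}^\top$ reassembles the Hessian of the Lagrangian of the original composite problem, which is exactly how the paper produces the real eigenvalue $\gamma>1$. The only points you gloss are the ones the paper spells out explicitly---verifying surjectivity of $\nabla_y G(\bar x,\bar x)$ (from the transversality condition), observing $\nabla_x G(\bar x,\bar x)=0$ so that the feasible set in the derivative formula reduces to $T_{\cM}(\bar x)$ and yields the compression formula, and noting that symmetry of $\overline{H_{xy}}$ follows from the matrix-level identity you plan to prove rather than being assumed---none of which changes the route.
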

In most ways, 
the proof 
	mirrors 
that of Theorem~\ref{thm:prox-point}. 
There 
	is, however, 
an important complication: 
	we
		must move beyond
	the perturbation result of Theorem~\ref{thm:stab}
			and instead
		analyze 
	a parametric family 
		of 
		optimization problems 
		where both 
			the objective 
				and 
			{\em the constraints} 
				depend 
				on 
				a perturbation parameter. 
Therefore, 
	we will rely 
on the following generalization 
	of
	Theorem~\ref{thm:stab}.
For 
	details 
		and 
	a much more general perturbation result, 
	see \cite[Theorem 4.2]{Shapiro1985}.
\begin{thm}[Perturbation analysis]\label{thm:stab2}
	Consider the family of optimization problems 
	\begin{gather}
	\min_y~ f(x,y) \qquad \textrm{subject to}\qquad G(x,y)=0 \tag{$\mathcal{Q}_x$}
	\end{gather}
	Fix 
		a point $\bar x$ 
			and 
		a minimizer $\bar y$ 
			of  
			$\mathcal{Q}_{\bar x}$, 
		and 
	suppose the following hold.
	\begin{enumerate}
	\item {\bf (Non-degeneracy)} 
		The function $f(\cdot,\cdot)$ 
			and 
		the map $G(\cdot,\cdot)$ 
				are 
		$C^2$-smooth near $(\bar x,\bar y)$, 
			and 
		the Jacobian $\nabla_y G(\bar x,\bar y)$ 
			is 
		surjective.
		\item {\bf (Level-boundedness)} 
	There 
		exists 
	a neighborhood $\cX$ 
		of 
		$\bar x$ 
			and
	a number $\gamma$ 
		greater 
			than 
		the minimal value 
			of 
			$\mathcal{Q}_{\bar x}$  
	such that the set
		$$\bigcup_{x\in \cX}\{y\in Y(x): f(x,y)\leq \gamma\}\qquad \textrm{is bounded},$$
	where $Y(x):=\{y: G(x,y)=0\}$ 
		denotes 
	the set 
		of 
		feasible points 
			for 
			$\mathcal{Q}_x$.
		\item {\bf (Quadratic growth)} 
		The point $\bar y$ 
			is 
				a strong local minimizer 
					and 
				a unique global minimizer 
					of 
					$\mathcal{Q}_{\bar x}$.
	\end{enumerate}
	Define the parametric Lagrangian function 
	$$\mathcal{L}(x,y,\lambda)=f(x,y)+\langle G(x,y),\lambda\rangle.$$
	Fix
		 the multiplier vector $\bar \lambda$ 
		 	satisfying $0=\nabla_y \mathcal{L}(\bar x,\bar y,\bar \lambda)$ 
				and 
	define 
		the Hessian matrices 
	$$H_{xx}=\nabla^2_{xx}\cL(\bar x,\bar y,\bar\lambda),\qquad H_{xy}=\nabla^2_{xy}\cL(\bar x,\bar y,\bar \lambda),\qquad H_{yy}=\nabla^2_{yy}\cL(\bar x,\bar y,\bar \lambda).$$
	Then 
		for every $x$ near $\bar x$, 
		the problem $\mathcal{Q}_{x}$ 
			admits 
		a unique solution $y(x)$,
		which varies $C^1$-smoothly.
		Moreover, its directional derivative 
				in direction $h$ 
					given by 
\begin{equation}\label{eqn:shap_manvary}
 \begin{aligned}	
	\nabla y(\bar x)h\quad =\quad&\argmin_v ~ 2\langle H_{xy} v,h\rangle+\langle H_{yy} v,v\rangle\\
	&~~~~{\rm s.t.}~~~~\nabla_x G(\bar x,\bar y)h+ \nabla_y G(\bar x,\bar y)v=0.
\end{aligned}
\end{equation}
\end{thm}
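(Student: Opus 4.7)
The plan is to adapt the proof of Theorem~\ref{thm:saddle_prox_grad} to the compositional setting by replacing the unconstrained perturbation result Theorem~\ref{thm:stab} with the parametric-constraint version Theorem~\ref{thm:stab2}. The first step is an identification claim: for $x$ near $\bar x$, the update $S(x)$ lies in $\cR$ and the linearization $z(x):=F(x)+\nabla F(x)(S(x)-x)$ lies in $\cH$. Writing first-order optimality for \eqref{eqn:prox_lin_map} produces $v(x)\in\partial r(S(x))$ and $w(x)\in\partial h(z(x))$ with $\nabla F(x)^{*} w(x)+v(x)=\mu^{-1}(x-S(x))$. Continuity of $S(\cdot)$ together with outer semicontinuity of $\partial r$ and $\partial h$, combined with the transversality-based uniqueness of the pair $(\bar v,\bar w)$ solving \eqref{eqn:subdif_incl}, forces $(v(x),w(x))\to(\bar v,\bar w)$. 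The sharpness clause from the tilted active manifolds then places $S(x)\in\cR$ and $z(x)\in\cH$ once $x$ is sufficiently close to $\bar x$.

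With identification in hand, I would choose $C^2$-smooth extensions $R$ of $r|_{\cR}$ and $H$ of $h|_{\cH}$ (via partitions of unity, as in Theorem~\ref{thm:saddle_moreau}) and local defining equations $G_{\cR}=0$ and $G_{\cH}=0$ for $\cR$ and $\cH$. For $x$ near $\bar x$, the update $S(x)$ is then the unique minimizer of
\begin{align*}
\min_{y}~ & H\bigl(F(x)+\nabla F(x)(y-x)\bigr) + R(y) + \tfrac{1}{2\mu}\|y-x\|^{2}\\
\text{s.t.}~ & G_{\cR}(y)=0,\qquad G_{\cH}\bigl(F(x)+\nabla F(x)(y-x)\bigr)=0.
\end{align*}
This problem fits the template $\mathcal{Q}_{x}$ of Theorem~\ref{thm:stab2}. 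The hypotheses to verify are: (i) \emph{non-degeneracy} of the $y$-Jacobian of the combined constraint map at $(\bar x,\bar x)$, which is surjective iff $T_{\cR}(\bar x)+\nabla F(\bar x)^{-1}[T_{\cH}(F(\bar x))]=\R^{d}$, a standard dual reformulation of the transversality condition \eqref{eqn:transversal}; and (ii) \emph{quadratic growth} and \emph{level-boundedness}, both flowing from uniform strong convexity of the $y$-objective with modulus $\mu^{-1}-\rho>0$, combined with Lemma~\ref{lem:checkcond}.

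Theorem~\ref{thm:stab2} then yields $C^{1}$-smoothness of $S$ near $\bar x$ together with the QP expression \eqref{eqn:shap_manvary} for $\nabla S(\bar x)h$ on the tangent space to the constraint manifold at $(\bar x,\bar x)$, which coincides with $T_{\cM}(\bar x)$. Projecting onto $T_{\cM}(\bar x)$ via an orthogonal projector $W$ and running through the same algebra as in the proof of Theorem~\ref{thm:saddle_prox_grad} gives the expression
$$\nabla S(\bar x)|_{T_{\cM}(\bar x)}=-\overline{H_{yy}}^{-1}\overline{H_{xy}},$$
with $\overline{H_{yy}}=W H_{yy}W$ positive definite by strong convexity, $\overline{H_{xy}}=W H_{xy}W$ symmetric, and the unique multipliers $\bar\lambda_{1},\bar\lambda_{2}$ determined by transversality through the decompositions $\bar v-\nabla R(\bar x)\in N_{\cR}(\bar x)$ and $\bar w-\nabla H(F(\bar x))\in N_{\cH}(F(\bar x))$.

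To extract a real eigenvalue strictly greater than one I would reuse the homotopy argument from Theorem~\ref{thm:saddle_prox_grad}: a scalar $\gamma$ is an eigenvalue of $\nabla S(\bar x)|_{T_{\cM}(\bar x)}$ precisely when $\gamma\overline{H_{yy}}+\overline{H_{xy}}$ is singular. At large $\gamma$ the positive definite term $\overline{H_{yy}}$ dominates, whereas at $\gamma=1$ the matrix reduces to $W(H_{yy}+H_{xy})W$. Continuity of eigenvalues then produces the desired $\gamma>1$ provided $W(H_{yy}+H_{xy})W$ has a strictly negative eigenvalue on $T_{\cM}(\bar x)$. This is where I expect the main technical work to lie: one must show via the second-order chain rule that, on $T_{\cM}(\bar x)$, the matrix $W(H_{yy}+H_{xy})W$ represents $d^{2}f_{\cM}(\bar x)$, so that the composite strict saddle hypothesis $d^{2}f_{\cM}(\bar x)(u)<0$ for some $u\in T_{\cM}(\bar x)$ supplies the negative eigenvalue. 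The bookkeeping---tracking how the two constraint blocks $G_{\cR}$ and $G_{\cH}\circ(F(x)+\nabla F(x)(\cdot-x))$ contribute to the Lagrangian Hessian, and reconciling the $\mu^{-1}$ cross-terms that cancel between $H_{yy}$ and $H_{xy}$---is the one step that genuinely goes beyond the prox-gradient argument and will require careful calculation rather than reuse.
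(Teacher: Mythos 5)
There is a fundamental mismatch here: you have not proved the statement in question. The statement to be established is Theorem~\ref{thm:stab2} itself---the parametric perturbation result asserting that, under non-degeneracy, level-boundedness, and quadratic growth, the constrained problem $\mathcal{Q}_x$ has a unique solution $y(x)$ that varies $C^1$-smoothly, with $\nabla y(\bar x)h$ characterized by the quadratic program \eqref{eqn:shap_manvary}. Your proposal instead takes this theorem as given (``This problem fits the template $\mathcal{Q}_x$ of Theorem~\ref{thm:stab2}\dots Theorem~\ref{thm:stab2} then yields $C^1$-smoothness of $S$\dots'') and uses it to derive the unstable-fixed-point result for the proximal linear map, i.e.\ Theorem~\ref{thm:saddle_prox_lin}. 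As an argument for the stated result it is therefore circular and contains no proof content: nothing in the proposal addresses why the solution map of the general family $\mathcal{Q}_x$ exists locally, is unique, is $C^1$, or why its derivative solves the tangentially constrained QP. (As a sketch of Theorem~\ref{thm:saddle_prox_lin}, what you wrote does track the paper's argument closely---identification of $\cR$ and $\cH$, smooth extensions, verification of the hypotheses of Theorem~\ref{thm:stab2}, the $-\overline{H_{yy}}^{-1}\overline{H_{xy}}$ formula, and the eigenvalue homotopy---but that is a different theorem.)

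For the record, the paper does not prove Theorem~\ref{thm:stab2} either; it cites \cite[Theorem 4.2]{Shapiro1985}. If you wanted to supply a self-contained proof, the natural route is an implicit-function-theorem argument on the KKT system
\begin{equation*}
\nabla_y f(x,y)+\nabla_y G(x,y)^*\lambda=0,\qquad G(x,y)=0,
\end{equation*}
showing that its Jacobian in $(y,\lambda)$ at $(\bar x,\bar y,\bar\lambda)$ is invertible: surjectivity of $\nabla_y G(\bar x,\bar y)$ together with positive definiteness of $H_{yy}$ on $\ker\nabla_y G(\bar x,\bar y)$ (the Lagrangian form of the quadratic growth hypothesis) gives invertibility, hence a locally unique $C^1$ branch $(y(x),\lambda(x))$; level-boundedness and uniqueness of the global minimizer of $\mathcal{Q}_{\bar x}$ then force the perturbed global minimizers to converge to $\bar y$ and hence to coincide with this KKT branch for $x$ near $\bar x$; finally, differentiating the KKT system in $x$ and identifying the resulting linear system as the optimality conditions of the QP in \eqref{eqn:shap_manvary} yields the derivative formula. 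None of these steps appear in your proposal.
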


With these tools in hand, we now prove Theorem~\ref{thm:saddle_prox_lin}.

\begin{proof}[Proof of Theorem~\ref{thm:saddle_prox_lin}]
Let 
	$\bar v$, 
	$\bar w$, 
	$\cH$, 
	$\cR$, 
		and 
	$\cM$ 
		be 
the 
	vectors 
	and 
	manifolds 
		specified 
in Definition~\ref{defn:comp_act_man}. 
It 
	is known 
from \cite[Theorem 4.11]{prox} that 
	for all $x$ near $\bar x$, 
	the inclusions hold:
$$S(x)\in \cM\qquad \textrm{and}\qquad F(x)+\nabla F(x)(S(x)-x)\in \mathcal{H}.$$
From this inclusion, 
	we 
		will be able to view 
	the proximal subproblem 
		through
	the lens of the perturbation result in Theorem~\ref{thm:stab2}.
For the sake of completeness, however, we provide a quick proof.
Indeed, 
	consider a sequence $x_i\to\bar x$ 
		and 
	define $z_i=F(x_i)+\nabla F(x_i)(S(x_i)-x_i)$. 
Then 
	appealing to 
the optimality conditions 
	of 
	the proximal linear subproblem, 
	we 
		deduce 
	that there 
		exist 
		vectors $v_i\in \partial r(x_i)$ 
			and 
		$w_i\in \partial h(z_i)$ 
	satisfying $\frac{1}{\mu}(x_i-S(x_i))=\nabla F(x_i)^*w_i+v_i$. 
Since 
	$S(\cdot)$ 
		is 
	continuous 
			and 
	$h$ 
		is 
	Lipschitz, 
the vectors 
	$w_i$ 
		and 
	$v_i$ 
		are
bounded.
Passing 
	to 
	a subsequence, 
	we 
		may assume 
	that 
		$w_i$ 
			and 
		$v_i$ 
			converge to 
	some 
		$w\in  \partial h(F(\bar x))$ 
			and 
		$v\in \partial r(\bar x)$, respectively, 
			and moreover, 
	that 
$$0\in \nabla F(\bar x)^*w+v.$$
We therefore 
	deduce 
		$w=\bar w$ 
			and 
		$v=\bar v$. 
Taking into account that 
	$\cR$ 
		is 
	a $C^2$-active manifold 
		at 
		$\bar x$ 
			for 
			$\bar v$ 
				and 
	$\cH$ 
		is 
	a $C^2$-active manifold 
		at 
			$F(\bar x)$ 
				for 
				$\bar w$, 
	we 
		deduce 
			$S(x_i)\in \cR$ 
				and  
			$z_i\in \mathcal{H}$ 
		for all large indices $i$, proving the claim.

Turning to the perturbation result, 
	let $\hat h\colon\R^m\to\R$ 
		be 
	any $C^2$-smooth function agreeing 
		with 
		$h$ 
			on 
			a neighborhood 
				of 
				$F(\bar x)$ 
					in 
					$\mathcal{H}$, 
					and 
	let $\hat r\colon\R^d\to\R$ 
		be 
	any $C^2$-smooth function agreeing 
		with 
		$r$ 
			on 
			a neighborhood 
				of 
				$\bar x$ 
					in 
					$\mathcal{R}$. 
Applying the claim, we find that for $x$ near $\bar x$, we may write
\begin{equation}\label{eqn:parametric_prox_lin}
\begin{aligned}
S(x)\quad=\quad&\argmin_{y} ~\hat h\left(F(x)+\nabla F(x)(y-x)\right)+\hat r(y)+\frac{1}{2\mu}\|y-x\|^2\\
&~~~~\textrm{s.t.}~~~F(x)+\nabla F(x)(y-x)\in \mathcal{H}\quad \textrm{and}\quad y\in \mathcal{R}
 \end{aligned}.
 \end{equation}
 Our goal
	is 
to apply the perturbation result (Theorem~\ref{thm:stab2})
	to 
	the parametric family \eqref{eqn:parametric_prox_lin}. 
To this end, 
	let $\omega=0$ 
		be 
	the local defining equations 
		of 
		$\cH$ 
			around 
		$F(\bar x)$ 
				and 
	let $\eta=0$ 
		be 
	the local defining equation 
		of 
		$\cR$ 
			around 
		$\bar x$. 
We 
	can now place 
\eqref{eqn:parametric_prox_lin} 
	in 
	the setting 
		of 
		Theorem~\ref{thm:stab2}
		by  setting
 $$f(x,y)=\hat h\left(F(x)+\nabla F(x)(y-x)\right)+\hat r(y)+\frac{1}{2\mu}\|y-x\|^2$$
 and 
$$G(x,y):=(G^{\mathcal{H}}(x,y),G^{\mathcal{R}}(x,y)):=(\omega(F(x)+\nabla F(x)(y-x)),\eta(y)).$$
For these functions, 
 	we 
		now verify 
	the assumptions 
		of
		 Theorem~\ref{thm:stab}. 
First, the nondegeneracy property 
	follows 
from the transversality condition~\eqref{eqn:transversal}. 
Second, we verify the quadratic growth condition: since we have chosen $\mu<\rho^{-1}$, it follows that for every $x\in\R^d$ the function $y\mapsto f(x,y)$ is strongly convex with the constant $\mu^{-1}-\rho$. 
Finally,
	we 
		verify 
	the level-boundedness condition: 
	since the minimizer $S(x)$ clearly 
		varies continuously 
			and 
		satisfies $S(\bar x)=\bar x$, 
	the conditions of Lemma~\ref{lem:checkcond} 
		are 
	satisfied. 
Therefore, the assumptions 
	of 
	Theorem~\ref{thm:stab} 
		are 
	valid. 
In particular, 
 we 
		learn 
	that the solution map $S(\cdot)$ 
		is 
	$C^1$-smooth 
		around 
		$\bar x$.

Computing the Jacobian 
	of 
	the solution mapping
		will occupy
the remainder of the proof. 
To that end, 
	define 
the parametric Lagrangian 
$$\cL(x,y,\lambda)=f(x,y)+\langle G(x,y),\lambda\rangle.$$
Localizing,
	the identification properties 
		then entail
	that $y=\bar x$ 
		is 
	a minimizer 
		of 
		the problem \eqref{eqn:parametric_prox_lin} 
			corresponding to 
			$x=\bar x$. 
We conclude 
	there 
		exists 
	a Lagrange multiplier vector 
$\bar \lambda=(\bar \lambda^{\cH},\bar \lambda^{\cR})$ 
	satisfying 
$0=\nabla_y \cL(\bar x,\bar x,\bar \lambda)$, 
a fact 
	we will return to 
after a few calculations.

We now compute the first order variations of $f$ and $G$. 
To simplify notation, 
	we 
		adopt 
	two conventions.
First
	we 
		align 
	the notation
		of 
		gradients 
			and 
		Jacobians, 
	viewing every gradient as a row vector. 
Second, 
	we 
		let 
	the symbol $\nabla^2 F[x;v]$ 
		denote 
	the $m\times d$ matrix 
		whose 
		$i$th row equals $v^\top\nabla^2 F_i(x)$.
Then defining the map
$$\zeta(x,y)=F(x)+\nabla F(x)(y-x),$$
a quick computation shows
 $$\nabla_y \zeta(x,y)= \nabla F(x)\qquad \textrm{ and }\qquad  \nabla_x \zeta(x,y)=\nabla^2 F[x,y-x].$$
 Therefore 
 	using 
the chain rule, 
	we 
		compute 
	the first-order variations
 \begin{align*}
 \nabla_x G^{\mathcal{H}}(x,y)&=\nabla \omega \left(\zeta(x,y)\right)\cdot\nabla^2 F[x,y-x]\\
 \nabla_y G^{\mathcal{H}}(x,y)&=\nabla \omega (\zeta(x,y))\cdot \nabla F(x)\\
 \nabla_x G^{\cR}(x,y)&=0\\
 \nabla_y G^{\cR}(x,y)&=\nabla \eta(y)\\
\nabla_x f(x,y) &=\nabla \hat h \left(\zeta(x,y)\right)\cdot \nabla^2 F[x,y-x]+\mu^{-1}(x-y)^{\top}\\
\nabla_y f(x,y) &=\nabla \hat h \left(\zeta(x,y)\right)\cdot \nabla F(x)+\nabla \hat r(y)+\mu^{-1}(y-x)^{\top}.
 \end{align*}
From these variations 
	we 
		deduce 
	$ \nabla_x G(\bar x,\bar x)=0$ 
		and 
 	therefore the constraint 
		in 
		\eqref{eqn:shap_manvary} simply 
			amounts 
		to the inclusion
 \begin{equation}
 \begin{aligned}
 v\in \ker \nabla_y G(\bar x,\bar x)&=\Big(\ker \nabla \eta(\bar x)\Big)\cap \Big(\ker (\nabla \omega (F(\bar x))\cdot \nabla F(\bar x))\Big)\\
 &=T_{\cR}(\bar x)\cap \nabla F(\bar x)^{-1}T_{\cH}(F(\bar x))=T_{\cM}(\bar x).
 \end{aligned}
 \end{equation}
In particular, formula~\eqref{eqn:shap_manvary} reduces to 
\begin{equation}\label{eqn:hss_prox_linear}
	\nabla S(\bar x)h=\argmin_{v\in T_{\cM}(\bar x)}~2\langle H_{xy} v,h\rangle+\langle H_{yy} v,v\rangle,
\end{equation}
To find an explicit solution, 
	we
		mirror 
	the analysis 
		of 
		the proximal gradient method. 
We 
	let 
$W$ 
	be 
the orthogonal projection 
	onto 
	$T_{\cM}(\bar x)$ 
			and 
	define 
the linear maps 
	$\overline{H_{yy}}\colon T_{\cM}(\bar x)\to T_{\cM}(\bar x)$ 
		and 
	$\overline{H_{xy}}\colon T_{\cM}(\bar x)\to T_{\cM}(\bar x)$ by setting $\overline{H_{yy}}=W H_{yy}W$ and $\overline{H_{xy}}=W H_{xy}W$, respectively. 
	Since $\bar x$ 
		is 
	a strong local minimizer 
		of 
		$\eqref{eqn:shap_manvary}$, 
		the map $\overline{H_{yy}}$ 
			is positive 
				definite
					and 
				invertible. 
Solving \eqref{eqn:shap_manvary} then yields the expression   
\begin{equation*}
\nabla S(\bar x)h=-\overline H_{yy}^{-1}\overline{H_{xy}}^\top h \qquad \textrm{for all }h\in T_{\cM}(\bar x).
\end{equation*}

Let us 
		now verify 
	that 
		if $\bar x$ 
			is 
		a composite strict saddle 
			of 
			$f$, 
		then $\nabla S(\bar x)$ 
			has a real eigenvalue that is greater than one.
To this end, observe that $\gamma\in\R$ is an eigenvalue of $\nabla S(\bar x)$ with an associated eigenvector $v\in T_{\cM}(\bar x)$ if and only if 
$$\nabla S(\bar x)v=\gamma v\qquad \Longleftrightarrow\qquad -\overline H_{yy}^{-1}\overline{H_{xy}}^\top v=\gamma v\qquad \Longleftrightarrow\qquad (\gamma\overline H_{yy} + \overline{H_{xy}}^\top) v=0.$$
In particular 
	if the matrix $\gamma\overline H_{yy} + \overline{H_{xy}}^\top$ 
		is 
	singular, 
	then $\gamma$ 
		is 
	an eigenvalue 
		of 
		$\nabla S(\bar x)$.
To prove  
	such a $\gamma\geq 1$ exists, 
we 
	will show 
	that $\overline H_{xy}$
		is 
		self-adjoint, 
and then 
	we 	
	will examine
the following ray of symmetric matrices 
	$$\{\gamma\overline H_{yy} + \overline{H_{xy}}^\top: \gamma\geq 1\}.$$
Beginning with the end point, 
	we 
		will show
	that the matrix $\overline H_{yy} +\overline{H_{xy}}^\top$ has a strictly negative eigenvalue.
On the other hand, 
	we already 
		know 
	the direction 
		of 
		the ray $\overline H_{yy}$ 
			is 
			a positive definite matrix. 
Therefore 
	by continuity 
		of 
		the minimal eigenvalue function, 
	there 
		will exist
	some $\gamma>1$ such that the matrix $\gamma\overline H_{yy} + \overline{H_{xy}}$ is singular, as claimed.

To this end, we now compute the second-order variations.
\begin{align*}
\nabla_{xy} G_i^{\mathcal{H}}(x,y)v&=\nabla^2 F[x;v]^\top\nabla \omega_i(\zeta(x,y))^\top+\nabla^2 F[x;y-x]^\top\nabla^2 \omega_i(\zeta(x,y)) \nabla F(x)v\\
\nabla_{yy} G_i^{\mathcal{H}}(x,y)v&= \nabla F(x)^{\top}\nabla^2\omega_i(\zeta(x,y))\nabla F(x)v\\
\nabla_{xy} f(x,y)v&=\nabla^2 F[x;v]^\top\nabla \hat h(\zeta(x,y))^\top+\nabla^2 F[x;y-x]\nabla^2\hat h(\zeta(x,y))\nabla F(x)v-\mu^{-1}v\\
\nabla_{yy} f(x,y)v&=\nabla F(x)^{\top}\nabla^2 \hat h(\zeta(x,y))\nabla F(x)v+\nabla^2\hat r(y)v+\mu^{-1}v.
\end{align*}
A quick computation then 
	shows
	that 
		$\nabla_{xy} f(\bar x,\bar x)$ 
			and 
		$\nabla_{xy} G_i^{\mathcal{H}}(\bar x,\bar x)$ 
			are 
		self-adjoint operators.
%\begin{align*}
%\langle\nabla_{xy} f(x,x)v,u\rangle&=\langle \nabla^2 F[x;v]^\top\nabla \hat h(\zeta(x,y))^\top,u\rangle\\
%&=\nabla \hat h(\zeta(x,y))\nabla^2 F[x;v]u
%\end{align*}
%\begin{align*}
%\nabla_y G^{\cH}_i(x+tu,y)&=\nabla\omega_i(\zeta(x+tu,y))\nabla F(x+tu) \\
%&=\nabla\omega_i(\zeta(x,y)+t\nabla^2 F[x,y-x]u)(\nabla F(x)+t\nabla^2 F(x;u)\\
%&=(\nabla\omega_i(\zeta(x,y))+t(\nabla^2\omega_i(\zeta(x,y))\nabla^2 F[x,y-x]u)^{\top})(\nabla F(x)+t\nabla^2 F(x;u))
%\end{align*}
%$y=x$
%
%\bigskip
%\bigskip
%\bigskip
%\bigskip
%
%\textcolor{red}{verify}
%\begin{align*}
%\nabla_x G_{i}^{\cH}(x,y+tv)&=\nabla\omega_i(\zeta(x,y+tv))\nabla^2 F[x;y-x+tv]\\
%&=\nabla\omega_i(F(x)+\nabla F(x)(y-x)+t\nabla F(x)v)(\nabla^2 F[x;y-x]+\nabla^2 F[x;tv])\\
%&=(\nabla\omega_i(\zeta(x,y))+ t(\nabla^2 \omega_i(\zeta(x,y)) \nabla F(x)v))^T)(\nabla^2 F[x;y-x]+t\nabla^2 F[x;v])\\
%&=\nabla_x G_{i}^{\cH}(x,y)+
%t((\nabla^2 \omega_i(\zeta(x,y)) \nabla F(x)v))^T\nabla^2 F[x;y-x]+\nabla\omega_i(\zeta(x,y))\nabla^2 F[x;v])
%\end{align*}
%
%So 
%$$\boxed{\nabla_{xy} G_i^{\cH}(x,y)v=\nabla^2 F[x;y-x]^\top\nabla^2 \omega_i(\zeta(x,y)) \nabla F(x)v+ \nabla^2 F[x;v]^\top\nabla\omega_i(\zeta(x,y))^\top}$$
%
%\begin{align*}
%\nabla_y G_i^{\cH}(x,y+tv)&=\nabla \omega_i(\zeta(x,y+tv))\nabla F(x)\\
%&=\nabla \omega_i(\zeta(x,y)+t\nabla F(x)v))\nabla F(x)\\
%&=(\nabla \omega_i(\zeta(x,y))+t(\nabla \omega_i^2(\zeta(x,y)\nabla F(x)v)^\top)\nabla F(x)
%\end{align*}
%$$\boxed{\nabla_{yy} G_i^{\cH}(x,y)v=\nabla F(x) \omega_i^2(\zeta(x,y))\nabla F(x)v}$$
%
%\textcolor{red}{verify}
Consequently, 
	we 
		obtain
		$H_{xy} =H_{xy}^\top$ 
			and 
		the expression
\begin{align*}
(H_{yy}+H_{xy}^{\top})v=\nabla &F(\bar x)^{\top}\nabla^2 \hat h(F(\bar x))\nabla F(\bar x)v+\nabla^2\hat r(\bar x)v+\nabla^2 F[\bar x;v]^\top\nabla \hat h(F(\bar x))^\top\\
&+\sum_{i\geq 1}\bar{\lambda}^{\cH}_i\Big(\nabla F(\bar x)^{\top}\nabla^2\omega_i(F(\bar x))\nabla F(\bar x)v+\nabla^2 F[\bar x;v]^\top\nabla \omega_i(F(\bar x))^\top\Big)\\
&+\sum_{i\geq 1}\bar{\lambda}_i^{\cR}  \nabla^2 \eta_i(y)v.
\end{align*}
To prove that $H_{yy}+H_{xy}^{\top}$ has a strictly negative eigenvalue, 
	we 
		will show
	that it 
		coincides 
	with the Hessian 
		of 
		the Lagrangian 
			of 
			the problem:
 $$\min_x ~\hat h(F(x))+\hat r(x)\qquad \textrm{ subject to } \qquad \omega(F(x))=0, \eta(x)=0.$$
Indeed, define the Lagrangian function 
$$\mathcal{L}_0(x,\lambda)=\hat h(F(x))+\hat r(x)+\sum_{i\geq 1}\lambda_i^{\cH}\omega(F(x))+\sum_{i\geq 1}\lambda_i^{\cR}\eta(x).$$ 
A quick computation 
	shows 
\begin{align*}
\nabla^2 (\hat h\circ F)(x)v&=\nabla F(x)^\top\nabla^2\hat h(F(x))\nabla F(x)v+\nabla^2 F[x,v]^\top\nabla \hat h (F(x))^\top\\
\nabla^2 (\omega_i\circ F)(x)v&=\nabla F(x)^\top\nabla^2\omega_i(F(x))\nabla F(x)v+\nabla^2 F[x,v]^\top\nabla \omega_i (F(x))^\top
\end{align*}
and therefore the equality
$$\nabla^2 \mathcal{L}_0(\bar x,\bar \lambda)=H_{yy}+H_{xy}^{\top}.$$
The composite strict saddle property guarantees that the matrix $\nabla^2 \mathcal{L}_0(\bar x,\bar \lambda)$ has a strictly negative eigenvalue, completing the proof.
\end{proof}

In line with the previous sections, 
	one 
		could 
	ask 
		whether 
		a 
			damped 
				and
			randomly initialized 
		proximal linear method almost surely 
			escapes 
		all composite strict saddle points. 
An immediate obstacle 
	is
	that the global Lipschitz constant 
		of 
		the proximal linear map $S(\cdot)$ 
			defined 
		in \eqref{eqn:prox_lin_map} seems unclear,
			and
	therefore we 
		are unable 
	to find an appropriate damping parameter. 
Instead 
	we 
		will settle 
	for a local escape guarantee 
		supplied by 
	the center stable manifold theorem. 
We 
	leave 
it as an intriguing open question 
	to obtain 
global escape guarantees 
	for 
	the damped proximal linear algorithm.

A first difficulty 
	in
applying the center stable manifold theorem 
	is 
that the Jacobian $\nabla S(\bar x)$ 
	at 
	the saddle point $\bar x$ 
		may not be
	invertible. 
Consequently, 
	we 
		will damp 
	the proximal linear method, 
		forcing the update 
			to be 
		a local diffeomorphism.
To compute 
	an 
	appropriate threshold 
		for
		the damping parameter 
	we 
		will need 
	to estimate the operator norm 
		of 
		$\nabla S(\bar x)$. 
This 
	is 
the content 
	of 
	the following lemma.
\begin{lem}[The slope at the critical points]\label{lem:loc_lip_const}
Consider the composite optimization problem \eqref{eqn:conv_comp} 
	and 
choose any $\mu\in (0,(\rho+2\beta)^{-1})$. 
Then 
	for all points $x\in\R^d$ 
	and 
	all critical points $\bar x\in\R^d$, 
the proximal linear map $S(\cdot)$ 
	defined in \eqref{eqn:prox_lin_map}
satisfies
$$\|S(x)-\bar x\|\leq\left(1  + \sqrt{\frac{2\beta\mu}{1-\mu\beta-\mu\rho}}\right)\cdot \max\left\{1,\frac{\mu\rho+\mu\beta}{1-\mu\rho-2\mu\beta}\right\}\cdot \|x-\bar x\|.$$
\end{lem}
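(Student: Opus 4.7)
The plan is to derive the bound by combining strong convexity of the proximal-linear subproblem with the quadratic approximation hypothesis \eqref{eqn:quadrapprox} and weak convexity of $f = h \circ F + r$ at the critical point $\bar x$. First, I verify that $\bar x$ is a fixed point of $S$: since $0 \in \partial f(\bar x) = \nabla F(\bar x)^\top \partial h(F(\bar x)) + \partial r(\bar x)$ by criticality, $\bar x$ satisfies the first-order optimality conditions of the strongly convex subproblem defining $S(\bar x)$, and strong convexity forces $S(\bar x) = \bar x$.

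The heart of the argument is to exploit strong convexity of
\[
\phi_x(y) := h(F(x) + \nabla F(x)(y - x)) + r(y) + \tfrac{1}{2\mu}\|y - x\|^2.
\]
Since $h$ composed with an affine map is convex, $r$ is $\rho$-weakly convex, and the quadratic is $\mu^{-1}$-strongly convex, $\phi_x$ is $(\mu^{-1} - \rho)$-strongly convex, with parameter strictly exceeding $2\beta$ under the hypothesis $\mu < (\rho + 2\beta)^{-1}$. Strong convexity at the minimizer $S(x)$ gives
\[
\phi_x(\bar x) - \phi_x(S(x)) \geq \tfrac{\mu^{-1} - \rho}{2}\|S(x) - \bar x\|^2.
\]
I would upper-bound the left-hand side by applying \eqref{eqn:quadrapprox} at $y = \bar x$ and $y = S(x)$ to replace each linearized $h$ term by the corresponding $h \circ F$ value up to error $\tfrac{\beta}{2}\|y - x\|^2$. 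The difference $f(\bar x) - f(S(x))$ that emerges is then controlled by $\tfrac{\rho + \beta}{2}\|S(x) - \bar x\|^2$ via weak convexity of $f$ and $0 \in \partial f(\bar x)$. Assembling these steps yields the key inequality
\[
(1 - 2\mu\rho - \mu\beta)\|S(x) - \bar x\|^2 + (1 - \mu\beta)\|S(x) - x\|^2 \leq (1 + \mu\beta)\|x - \bar x\|^2. \qquad(\star)
\]

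Extracting the claimed bound from $(\star)$ requires a two-pronged case analysis. Setting $u = S(x) - \bar x$ and $v = x - \bar x$, so that $S(x) - x = u - v$, expanding $\|u - v\|^2$ in $(\star)$ produces the quadratic inequality $(1 - \mu\rho - \mu\beta)\|u\|^2 - (1 - \mu\beta)\langle u, v\rangle - \mu\beta\|v\|^2 \leq 0$. Applying Cauchy--Schwarz and solving the resulting one-variable quadratic gives a bound whose form depends on the relative magnitudes of $\|S(x) - x\|$ and $\|x - \bar x\|$: when these are comparable, solving the quadratic directly supplies the factor $1 + \sqrt{2\mu\beta/(1 - \mu\rho - \mu\beta)}$; in the complementary regime, an analogous triangle-inequality argument applied to $(\star)$ after substituting $\|S(x) - \bar x\|^2 = \|(S(x) - x) + (x - \bar x)\|^2$ yields the bound $\|S(x) - x\| \leq \frac{\mu\rho + \mu\beta}{1 - \mu\rho - 2\mu\beta}\|x - \bar x\|$, which contributes the $\max$ factor. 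Combining the two estimates via triangle inequality produces the claimed product form.

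The main obstacle is the two-regime nature of the analysis: the coefficient $1 - 2\mu\rho - \mu\beta$ in $(\star)$ need not be positive throughout the allowed range of $\mu$, so the naive direct bound from $(\star)$ degenerates near the upper endpoint $\mu = (\rho + 2\beta)^{-1}$, particularly when $\rho \geq \beta$. The $\max$ factor in the claim precisely absorbs this degeneracy, while the $\sqrt{\cdot}$ factor captures the sharper $\sqrt{\mu\beta}$-dependence that can be extracted when the discriminant of the quadratic is bounded carefully rather than crudely.
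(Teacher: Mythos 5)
Your derivation of the key inequality $(\star)$ is correct: strong convexity of the subproblem at its minimizer $S(x)$, the two applications of \eqref{eqn:quadrapprox}, and the subgradient inequality \eqref{eqn:subgrad_ineq} for the $(\rho+\beta)$-weakly convex $f$ at the critical point $\bar x$ do combine to give $(1-2\mu\rho-\mu\beta)\|S(x)-\bar x\|^2+(1-\mu\beta)\|S(x)-x\|^2\le(1+\mu\beta)\|x-\bar x\|^2$. This is a genuinely different comparison from the paper's, which never brings $\bar x$ into the subproblem: there one sets $\gamma=\mu^{-1}-\beta$, compares $S(x)$ with $\hat x=\prox_{f/\gamma}(x)$ to obtain $(\gamma-\rho)\|S(x)-\hat x\|^2\le 2\beta\|\hat x-x\|^2$, and then uses $\hat x-x=-\gamma^{-1}\nabla f_{1/\gamma}(x)$, $\nabla f_{1/\gamma}(\bar x)=0$, and the Lipschitz constants of Lemma~\ref{lem_prop_moreau}; in that route the factor $\sqrt{2\beta\mu/(1-\mu\beta-\mu\rho)}$ comes from the prox-linear-versus-proximal comparison and the remaining factor from the Lipschitz bounds on $\nabla f_{1/\gamma}$ and $\prox_{f/\gamma}$.

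The gap is in the second half: the case analysis by which you pass from $(\star)$ to the stated constant is only gestured at, and its ingredients fail. The inequality you invoke in the ``complementary regime,'' $\|S(x)-x\|\le\frac{\mu\rho+\mu\beta}{1-\mu\rho-2\mu\beta}\|x-\bar x\|$, is false: take $h\equiv 0$ (so \eqref{eqn:quadrapprox} holds for every $\beta>0$) and $r(x)=\frac{c}{2}\|x\|^2$ with $c$ huge; then $S(x)=x/(1+\mu c)$, so $\|S(x)-x\|$ is nearly $\|x-\bar x\|$ while your right-hand side is arbitrarily small for small $\rho,\beta,\mu$. More fundamentally, the exact constant of the lemma cannot be extracted from $(\star)$ at all: with $h\equiv 0$ and $r(x)=-\frac{\rho}{2}\|x\|^2$ near $\bar x=0$ (modified away from a neighborhood so that $f$ is bounded below), one has $S(x)=x/(1-\mu\rho)$ locally, and this choice saturates $(\star)$ as $\beta\to 0$; when $\mu\rho<1/2$ and $\beta$ is small, the ratio $1/(1-\mu\rho)$ strictly exceeds the claimed factor, which tends to $\max\{1,\tfrac{\mu\rho}{1-\mu\rho}\}=1$. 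So no rearrangement of $(\star)$ can yield the displayed estimate in that regime; since this instance satisfies all the hypotheses, it also indicates that the stated constant is an artifact of the paper's decomposition and really requires $1+\frac{\mu(\rho+\beta)}{1-\mu\rho-2\mu\beta}$ (the Lipschitz constant of $\prox_{f/\gamma}$) in place of the $\max$, which is what the paper's final triangle-inequality step actually produces when carried out in full. What your $(\star)$ does deliver, via Cauchy--Schwarz and the quadratic formula, is the valid bound $\|S(x)-\bar x\|\le\frac{(1-\mu\beta)+\sqrt{(1-\mu\beta)^2+4\mu\beta(1-\mu\rho-\mu\beta)}}{2(1-\mu\rho-\mu\beta)}\,\|x-\bar x\|$, which would serve the lemma's only downstream use (sizing the damping parameter in the local escape theorem), but it is not the stated inequality, and your write-up does not bridge that difference.
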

\begin{proof}
To simplify notation, 
	define 
	the map $$\zeta(x,y)=F(x)+\nabla F(x)(y-x).$$ 
Set  $\gamma:=\mu^{-1}-\beta$, fix an arbitrary point $x\in \R^d$, 
	and 
define
\begin{align*}
x^+:=S(x)\qquad \textrm{and}\qquad
\hat x:=\prox_{f/\gamma}(x).
\end{align*}
Using strong convexity of the prox-linear and proximal subproblems and the estimate \eqref{eqn:quadrapprox}, we successively compute 
\begin{align*}
h(\hat x)+r(\hat x)+\frac{\gamma}{2}\|\hat x-x\|^2&\leq h(x^+)+r(x^+)+\frac{\gamma}{2}\|x^+-x\|^2-\frac{\gamma-\rho-\beta}{2}\|x^+-\hat x\|^2\\
&\leq h(\zeta(x,x^+))+r(x^+)+\frac{\gamma+\beta}{2}\|x^+-x\|^2-\frac{\gamma-\rho-\beta}{2}\|x^+-\hat x\|^2\\
&\leq h(\zeta(x,\hat x))+r(\hat x)+\frac{\gamma+\beta}{2}\|\hat x-x\|^2-(\gamma-\rho)\|x^+-\hat x\|^2\\
&\leq h(\hat x)+r(\hat x)+\frac{\gamma+2\beta}{2}\|\hat x-x\|^2-(\gamma-\rho)\|x^+-\hat x\|^2.
\end{align*}
Rearranging yields the estimate
\begin{align*}
(\gamma-\rho)\|x^+-\hat x\|^2\leq 2\beta\|\hat x-x\|^2&=2\beta\gamma^{-2}\|\nabla f_{1/\gamma}(x)\|^2.
\end{align*}
Therefore, using Lipschitz continuity of the gradient $\nabla f_{1/\gamma}$ (Lemma~\ref{lem_prop_moreau})  and the triangle inequality yields
\begin{align*}
\|x^+- \bar x\|&\leq \left(\gamma^{-1} + \sqrt{\frac{2\beta\gamma^{-2}}{\gamma-\rho}}\right)\cdot \max\left\{\gamma,\frac{\rho+\beta}{1-\gamma^{-1}(\rho+\beta)}\right\}\cdot \|x-\bar x\|\\
&= \left(1  + \sqrt{\frac{2\beta\mu}{1-\mu\beta-\mu\rho}}\right)\cdot \max\left\{1,\frac{\mu\rho+\mu\beta}{1-\mu\rho-2\mu\beta}\right\}\cdot \|x-\bar x\|,
\end{align*}
as claimed.
\end{proof}

We are now ready to deduce that the damped proximal linear method almost locally escapes any composite strict saddle point.

\begin{thm}[Proximal linear method: local escape]
Consider the composite problem~\eqref{eqn:conv_comp} 
	and 
let $\bar x$ 
	be 
any composite strict saddle point. 
Choose any constant $\mu\in(0,(\rho+2\beta)^{-1})$
	and 
a damping parameter $\alpha\in (0,1)$ satisfying
$$\alpha\cdot \left(1+\left(\left(1  + \sqrt{\frac{2\beta\mu}{1-\mu\beta-\mu\rho}}\right)\cdot \max\left\{1,\frac{\mu\rho+\mu\beta}{1-\mu\rho-2\mu\beta}\right\}\right)\right)<1.$$
Define the damped proximal linear update 
$$T(x)=(1-\alpha)x+\alpha S(x),$$
where $S(\cdot)$ is the proximal linear map defined in \eqref{eqn:prox_lin_map}. 
Then there 
	exists 
a neighborhood $U$ 
	of
	$\bar x$ 
	such that the set of initial conditions
$$\{x\in U: S^k(x)\in U\textrm{ for all }k\geq 0\}$$
has zero Lebesgue measure.
\end{thm}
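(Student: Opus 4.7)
The plan is to apply the immediate consequence of the Center Stable Manifold Theorem (the remark following Theorem~\ref{thm:center_stab}) to the map $T$ at the fixed point $\bar x$. That remark demands only three ingredients: $T$ is a $C^1$ local diffeomorphism on a neighborhood of $\bar x$, $\bar x$ is a fixed point of $T$, and $\nabla T(\bar x)$ carries at least one eigenvalue of magnitude strictly greater than one. Once these are in place, the theorem produces a neighborhood $B$ on which the trapped set has Lebesgue measure zero, and setting $U=B$ finishes the proof.

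The easy ingredients come from results already in hand. Criticality of $\bar x$, together with the $\rho$-weak convexity of $r$ and the choice $\mu<(\rho+2\beta)^{-1}<\rho^{-1}$, makes the prox-linear subproblem strongly convex and forces $S(\bar x)=\bar x$, so $T(\bar x)=\bar x$ as well. Theorem~\ref{thm:saddle_prox_lin} then supplies, in one stroke, both the $C^1$ smoothness of $S$ near $\bar x$ and the existence of a real eigenvalue $\lambda_{\star}>1$ of $\nabla S(\bar x)$. Writing $\nabla T(\bar x)=(1-\alpha)I+\alpha\,\nabla S(\bar x)$, I inherit $C^1$ smoothness for $T$ and the real eigenvalue $1+\alpha(\lambda_{\star}-1)>1$ for $\nabla T(\bar x)$, which provides the required instability.

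The main obstacle is invertibility of $\nabla T(\bar x)$, and this is where Lemma~\ref{lem:loc_lip_const} together with the precise damping threshold enters. Let $L$ denote the explicit constant appearing in Lemma~\ref{lem:loc_lip_const}, so that $\|S(x)-\bar x\|\leq L\|x-\bar x\|$ holds globally. Passing to the limit $x\to\bar x$ and using $S(\bar x)=\bar x$ together with differentiability of $S$ at $\bar x$ yields the operator-norm bound $\|\nabla S(\bar x)\|_{\mathrm{op}}\leq L$, so every (possibly complex) eigenvalue $\eta$ of $\nabla S(\bar x)$ satisfies $|\eta|\leq L$. Rearranging the damping hypothesis as $\alpha L<1-\alpha$ and applying the reverse triangle inequality shows that every eigenvalue of $\nabla T(\bar x)$ satisfies
$$\bigl|(1-\alpha)+\alpha\eta\bigr|\;\geq\;(1-\alpha)-\alpha L\;>\;0,$$
so $\nabla T(\bar x)$ is nonsingular. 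The inverse function theorem then upgrades this to a $C^1$ local diffeomorphism, and the center stable manifold theorem completes the argument. The threshold $\alpha(1+L)<1$ in the hypothesis is tailored precisely to make the above reverse-triangle bound work; relaxing it would require a sharper spectral estimate on $\nabla S(\bar x)$ than what the global Lipschitz-type bound of Lemma~\ref{lem:loc_lip_const} provides.
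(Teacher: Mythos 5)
Your proposal is correct and follows essentially the same route as the paper: instability of $\bar x$ as a fixed point of $T$ via Theorem~\ref{thm:saddle_prox_lin} (and the damping identity $\nabla T(\bar x)=(1-\alpha)I+\alpha\nabla S(\bar x)$), invertibility of $\nabla T(\bar x)$ from the bound $\|\nabla S(\bar x)\|_{{\rm op}}\leq L$ extracted from Lemma~\ref{lem:loc_lip_const} together with the threshold $\alpha(1+L)<1$, and then the center stable manifold theorem. The only cosmetic difference is that you check nonsingularity eigenvalue-by-eigenvalue via $\left|(1-\alpha)+\alpha\eta\right|\geq(1-\alpha)-\alpha L>0$, whereas the paper writes $\nabla T(\bar x)=I-\alpha(I-\nabla S(\bar x))$ and notes $\alpha\|I-\nabla S(\bar x)\|_{{\rm op}}<1$; these amount to the same estimate.
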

\begin{proof}
First, 
	using 
		Theorem~\ref{thm:saddle_prox_lin} 
			and 
		Lemma~\ref{lem:relax_and_split}, 
	we 
		deduce 
	that $\bar x$ 
		is 
	an unstable fixed point 
		of 
		$\bar x$. 
Let us next 
	verify
 that $T$
 	is 
a local diffeomorphism 
	around 
	$\bar x$. 
To see this, 
	observe 
$$\nabla T(\bar x)=I-\alpha (I-\nabla S(\bar x)).$$
Using Theorem~\ref{lem:loc_lip_const}, 
	we 
		deduce 
		$\alpha\|I-\nabla S(\bar x)\|_{{\rm op}}<1$
			and 
		therefore $T$ is invertible. 
An application 
	of 
	the center stable manifold theorem (Theorem~\ref{thm:center_stab}) 
		completes 
	the proof.
\end{proof}

\section{Convergence of relaxed descent methods}\label{sec:makeitrain}
Thus far, all of our escape theorems made an assumption that the iterate sequence generated by the algorithms converges. In this section, we verify this assumption for the damped proximal point, proximal gradient, and proximal linear methods.
Taking a general view, 
	we
		see 
	that the iterative methods of this paper can be understood within a broad family of damped model-based algorithms for minimizing a function $f$.
These algorithms
	construct
iterates $x_0, x_1 \ldots$ 
	by repeatedly 
		minimizing 
	a local model $f_x(\cdot)$ of the function and moving in the direction of its minimizer.
More specifically, in the section we suppose that there exist constant $\functionweak,\modelweak,\approxacc>0$ such that the 
the following properties hold:
\begin{enumerate}[label=$\mathrm{(A\arabic*)}$]
\item \label{ass:func_weak}The function $f \colon \RR^d \rightarrow \RR \cup \{\infty \}$ is closed and $\functionweak$-weakly convex.
\item \label{ass:model_approx} For all $x \in \RR^d$ there exists a closed $\modelweak$-weakly convex function $f_x \colon \RR^d \rightarrow \RR\cup\{\infty\}$ satisfying 
$$
|f(y) - f_x(y)| \leq \frac{\approxacc}{2}\|y - x\|^2\qquad \textrm{for all }y\in\R^d.
$$
\end{enumerate}
Under these assumptions
	we 
		will study 
	how the following algorithm behaves:
	given iterates $x_0, \ldots, x_t$ define
\begin{align*}
\begin{aligned}
y_{t} &= \argmin_{y \in \RR^d}\left\{ f_{x_t}(y) + \frac{\algstep}{2}\|y - x_t\|^2\right\} \\
x_{t+1} &= (1-\alpha) x_t + \alpha y_t,
\end{aligned}\tag{$\mathcal{MBA}$} \label{eq:MBA}
\end{align*}
where $\algstep > 0$ and $\alpha > 0$ are fixed constants, determined below. 

To analyze this algorithm, 
	we 
		rely
	on 
		the seminal paper~\cite{attouch2013convergence}.
There, 
	the authors 
		identified
	three conditions, 
guaranteeing global convergence 
	of 
		a sequence $\{z_t\}$ of ``algorithm iterates"
	to a critical point 
		of 
		a  closed function $g \colon \RR^d \rightarrow \RR \cup\{\infty \}$.
Namely, 
	they assume 
there exist $a, b > 0$ such that the following holds: 
\begin{enumerate}[label=$\mathrm{(B\arabic*)}$]
\item {\bf (Sufficient Decrease.)} \label{eq:att1} For each $t \in \NN$, we have
$$
g(z_{t+1}) + a\|z_{t+1} - z_t\|^2 \leq g(z_t)
$$
\item {\bf (Relative Error Conditions.)}\label{eq:att2} For each $t \in \NN$ there exists $w_{t+1} \in \partial g(z_{t+1})$ such that 
$$
\|w_{t+1} \| \leq b \|z_{t+1} - z_t\|
$$
\item {\bf (Continuity Condition.)} \label{eq:att3}There exists a subsequence $\{z_{t_j}\}$ and $\tilde z$ such that 
$$
z_{t_j} \rightarrow \tilde z \text{ and } g(z_{t_j}) \rightarrow g(\tilde z), \qquad \text{ as } j \rightarrow \infty.
$$
\end{enumerate}

The above assumptions alone
	may not guarantee
that $z_t$ converges to a critical point of $g$. 
Instead, the authors of~\cite{attouch2013convergence}
	restrict their focus to the broad class of functions satisfying the \emph{Kurdyka-\L{}ojasiewicz property}. 
\begin{definition}[K{\L} Function]{\rm 
Let $g \colon \RR^d \rightarrow \RR \cup \{\infty\}$ be a closed function.
We say that $g$ has the \emph{Kurdyka-\L{}ojasiewicz (KL) property} at a point $\bar x$, where $\partial g(\bar x)$ is nonempty,  if there exists $\varepsilon \in (0, +\infty]$, a neighborhood $U$ of $\bar x$, and a continuous convex function $\varphi \colon [0, \varepsilon) \rightarrow \RR_+$ satisfying
\begin{enumerate}
	\item $\varphi(0) = 0$, 
	\item $\varphi$ is $C^1$ on $(0, \varepsilon)$ with $\varphi' > 0$, and
	\item the K{\L} inequality 
	$$
	\dist(0, \partial g(x)) \geq \frac{1}{\varphi'(g(x) - g(\bar x))},
	$$
	holds for all $x \in U$ satisfying  $g(\bar x)<g(x)<g(\bar x)+\varepsilon$.
\end{enumerate} 
If $g$ satisfies the K{\L} property at each point $x$, with $\partial g(x)\neq\emptyset$, then $g$ is called a \emph{K{\L} function.}}
\end{definition}	

The class of K{\L} functions 
	is 
	broad,
	containing 
		 all closed semialgebraic functions and more broadly any functions definable in an o-minimal structure, 
	as shown in the pioneering work~\cite{bolte2007clarke}.
	Under these assumptions 
	we have the following theorem from~\cite[Theorem 2.9]{attouch2013convergence}.

\begin{thm}\label{thm:attouch}
Let $ g \colon \RR^d \rightarrow \RR\cup\{\infty\}$ be a closed function. Consider a sequence $x_t$ that satisfies \ref{eq:att1}, \ref{eq:att2}, and \ref{eq:att3}. 
If $g$ satisfies the K{\L} property at some cluster point $\tilde x$, then $\tilde x$ is a critical point of $g$, the entire sequence $x_k$ converges to $\tilde x$, and the sequence $x_t$ has finite length 
$$
\sum_{t=0}^\infty \|x_{t+1} - x_t\| < + \infty.
$$
%Moreover, the bound holds: 
\end{thm}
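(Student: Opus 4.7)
The plan is to follow the by-now standard Kurdyka-\L{}ojasiewicz descent argument in the style of Attouch-Bolte-Svaiter. First, I would extract consequences of \ref{eq:att1} alone. Summing \ref{eq:att1} yields both that the sequence $\{g(x_t)\}$ is monotonically nonincreasing and that $\sum_t \|x_{t+1}-x_t\|^2<\infty$, hence $\|x_{t+1}-x_t\|\to 0$. Since $\{g(x_t)\}$ is nonincreasing and has the subsequential limit $g(\tilde x)$ by \ref{eq:att3}, it converges (monotonically) to $g(\tilde x)$, and in particular $g(x_t)\geq g(\tilde x)$ for all $t$. Next, criticality of $\tilde x$ is immediate: along the subsequence $x_{t_j}\to\tilde x$, the vectors $w_{t_j+1}\in\partial g(x_{t_j+1})$ from \ref{eq:att2} satisfy $\|w_{t_j+1}\|\leq b\|x_{t_j+1}-x_{t_j}\|\to 0$, and $x_{t_j+1}\to\tilde x$ with $g(x_{t_j+1})\to g(\tilde x)$; then outer semicontinuity of the subdifferential graph gives $0\in\partial g(\tilde x)$.

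The main content, and the expected main difficulty, is proving finite length. If $g(x_T)=g(\tilde x)$ for some $T$ then by monotonicity $g(x_t)=g(\tilde x)$ for $t\geq T$, and \ref{eq:att1} forces $x_t$ to be eventually constant, so suppose $g(x_t)>g(\tilde x)$ strictly for all $t$. Let $\varphi$, $\varepsilon$, and $U$ be as in the K{\L} definition at $\tilde x$. The key estimate combines concavity of $\varphi$, the K{\L} inequality, and \ref{eq:att2}: setting $\Delta_t:=\varphi(g(x_t)-g(\tilde x))$, whenever $x_t\in U$ with $g(\tilde x)<g(x_t)<g(\tilde x)+\varepsilon$, one has
\[
\Delta_t-\Delta_{t+1}\geq \varphi'(g(x_t)-g(\tilde x))(g(x_t)-g(x_{t+1}))\geq \frac{a\|x_{t+1}-x_t\|^2}{\dist(0,\partial g(x_t))}\geq \frac{a\|x_{t+1}-x_t\|^2}{b\|x_t-x_{t-1}\|}.
\]
Applying the inequality $2\sqrt{uv}\leq u+v$ yields the telescoping recursion
\[
2\|x_{t+1}-x_t\|\leq \|x_t-x_{t-1}\|+\tfrac{b}{a}(\Delta_t-\Delta_{t+1}).
\]
Summing over any block of indices on which the iterates remain in $U$ and $g$-values lie in $(g(\tilde x),g(\tilde x)+\varepsilon)$ yields $\sum \|x_{t+1}-x_t\|\leq \|x_{t_0}-x_{t_0-1}\|+\tfrac{b}{a}\Delta_{t_0}$, which is a uniform bound independent of the block length.

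The delicate step, which I expect to be the main obstacle, is showing that the entire tail of $\{x_t\}$ stays inside such a block, rather than just a subsequence. The standard trick is to fix a small ball $B(\tilde x,r)\subset U$ with $r$ chosen so that $\|x_{t_0}-\tilde x\|+\|x_{t_0}-x_{t_0-1}\|+\tfrac{b}{a}\varphi(g(x_{t_0})-g(\tilde x))<r$, which is possible by choosing $t_0$ in the convergent subsequence large enough (using $x_{t_j}\to\tilde x$, $g(x_{t_j})\to g(\tilde x)$, and $\|x_{t+1}-x_t\|\to 0$). A straightforward induction using the telescoped bound then shows $x_t\in B(\tilde x,r)$ for all $t\geq t_0$, so the recursion applies globally from $t_0$ onward. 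Taking the limit gives $\sum_{t\geq 0}\|x_{t+1}-x_t\|<\infty$, and Cauchy-ness together with $x_{t_j}\to\tilde x$ forces $x_t\to\tilde x$, completing the proof.
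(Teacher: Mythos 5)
Your argument is correct, but note that the paper does not prove this statement at all: it is imported verbatim as \cite[Theorem 2.9]{attouch2013convergence}, and what you have written is essentially a faithful reconstruction of that standard Attouch--Bolte--Svaiter proof (sufficient decrease plus relative error give vanishing steps and subsequential criticality; the K{\L} desingularization with the estimate $2\|x_{t+1}-x_t\|\leq\|x_t-x_{t-1}\|+\tfrac{b}{a}(\Delta_t-\Delta_{t+1})$ and the trapping-ball induction give finite length and convergence). One small point of care: your key estimate uses \emph{concavity} of $\varphi$, which is what the cited source assumes, whereas the paper's Definition of the K{\L} property says ``convex''---this is evidently a typo in the paper rather than a gap in your proof (indeed, with a genuinely convex $\varphi$ one could even shortcut your square-root step by applying the K{\L} inequality at $x_{t+1}$ together with \ref{eq:att2} to get $\Delta_t-\Delta_{t+1}\geq\tfrac{a}{b}\|x_{t+1}-x_t\|$ directly).
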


In the remainder of this section,
	we 
		will verify 
	assumption~\ref{eq:att1}, \ref{eq:att2}, and \ref{eq:att3}
		for
		the sequence $\{z_t\} = \{x_t\}$
			and
		the Moreau envelope
		$
			g := f_{1/\mstep},
		$
where $\mstep$ will be chosen in a moment. 
Since the critical points 
	of 
		$f$ 
			and 
		$f_{1/\mstep}$ 
	agree, 
	the result will imply convergence to critical points of $f$. 
To do so,
	we 
		employ 
	one final assumption. 
\begin{enumerate}[label=$\mathrm{(A\arabic*)}$, start=3]
\item \label{ass:moreau} For every $\mstep > 0$, the Moreau envelope $f_{1/\mstep}$ is a K{\L} function.
\end{enumerate}
Although assumption~\ref{ass:moreau}
	may appear 
hard to verify, 
	it 
		holds  
	whenever 
		$f$ is semialgebraic
	since in this case $f_{1/\mstep}$ 
		 is 
	also semialgebraic. More generally, the analogous statement holds if $f$ is definable in an o-minimal structure.
The following is the main result of this section.
\begin{thm}[Convergence of relaxed model-based methods]\label{thm:model_descent}
Suppose that $\alpha\in (0, 1]$, that 
$
\algstep > \max\{ \modelweak, 2\functionweak, \frac{4\approxacc + \functionweak + \modelweak}{2}\},
$ and that assumptions~\ref{ass:func_weak} and~\ref{ass:model_approx} hold. Then for all $T \geq 0$, we have
$$
\min_{t=0, \ldots, T}  \|\nabla f_{1/\mstep}(x_t)\| \leq \sqrt{\frac{f_{1/\mstep}(x_0) - \inf f}{\frac{\alpha (2\mstep- \functionweak - \modelweak - \approxacc)}{2\mstep(\mstep + \algstep - \functionweak - \modelweak ) }(T+1)}}.
$$
where $\mstep =  (1/2)\tau + (1/4)(\functionweak + \modelweak)$.
Moreover, if~\ref{ass:moreau} also holds and the sequence $\{x_t\}$ has a cluster point $\bar x$, then $\bar x$ is critical for $f$ and the entire sequence $\{x_t\}$ converges to $\bar x$. Moreover, the sequence $\{x_t\}$ has finite length.
$$
\sum_{t=0}^\infty \|x_{t+1} - x_t\| < +\infty.
$$ 
\end{thm}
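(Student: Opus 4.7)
The plan is to apply Theorem~\ref{thm:attouch} to $g := f_{1/\mstep}$, using Lemma~\ref{lem_prop_moreau} to identify critical points of $g$ with those of $f$. Since $f_{1/\mstep}$ is $C^1$-smooth with Lipschitz gradient, and inherits the Kurdyka-\L{}ojasiewicz property under \ref{ass:moreau}, the task reduces to verifying the three conditions \ref{eq:att1}, \ref{eq:att2}, and \ref{eq:att3} along $\{x_t\}$.

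The principal technical step is the sufficient-decrease bound, from which both the complexity estimate and \ref{eq:att1} follow. Introduce the proximal point $\hat x_t := \prox_{(1/\mstep) f}(x_t)$, so that $\nabla f_{1/\mstep}(x_t) = \mstep(x_t - \hat x_t)$. First-order optimality produces the subgradients $\mstep(x_t - \hat x_t) \in \partial f(\hat x_t)$ and $\algstep(x_t - y_t) \in \partial f_{x_t}(y_t)$. Applying the weak-convex subgradient inequality~\eqref{eqn:subgrad_ineq} to $f$ at the base point $\hat x_t$, and to $f_{x_t}$ at the base point $y_t$, then using the approximation bound in \ref{ass:model_approx} to swap $f_{x_t}$ for $f$ at the evaluation points, yields after addition and a short expansion of the two inner products the key inequality
\begin{equation*}
(\mstep + \algstep - \functionweak - \modelweak)\langle x_t - \hat x_t, y_t - x_t\rangle \leq -\tfrac{2\mstep - \functionweak - \modelweak - \approxacc}{2}\|x_t - \hat x_t\|^2 - \tfrac{2\algstep - \functionweak - \modelweak - \approxacc}{2}\|y_t - x_t\|^2.
\end{equation*}
The specific choice $\mstep = \algstep/2 + (\functionweak + \modelweak)/4$ together with the hypothesis $\algstep > (4\approxacc + \functionweak + \modelweak)/2$ renders all three displayed coefficients strictly positive, and is furthermore equivalent to $\frac{2\algstep - \functionweak - \modelweak - \approxacc}{\mstep + \algstep - \functionweak - \modelweak} \geq 1$, a fact to be exploited below.

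Combining the key inequality with the $\mstep$-smoothness upper bound (Lemma~\ref{lem_prop_moreau})
\begin{equation*}
f_{1/\mstep}(x_{t+1}) \leq f_{1/\mstep}(x_t) + \alpha \mstep \langle x_t - \hat x_t, y_t - x_t\rangle + \tfrac{\alpha^2 \mstep}{2}\|y_t - x_t\|^2,
\end{equation*}
the coefficient of $\|y_t - x_t\|^2$ on the right becomes non-positive for any $\alpha \leq 1$ by the equivalence above. Dropping this term and substituting $\|x_t - \hat x_t\|^2 = \mstep^{-2}\|\nabla f_{1/\mstep}(x_t)\|^2$ produces
\begin{equation*}
f_{1/\mstep}(x_t) - f_{1/\mstep}(x_{t+1}) \geq \tfrac{\alpha(2\mstep - \functionweak - \modelweak - \approxacc)}{2\mstep(\mstep + \algstep - \functionweak - \modelweak)}\|\nabla f_{1/\mstep}(x_t)\|^2.
\end{equation*}
Telescoping over $t = 0, \ldots, T$ and using $\inf f_{1/\mstep} = \inf f$ then immediately gives the complexity bound.

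For the convergence claim under \ref{ass:moreau}, I would verify the remaining two Attouch conditions. The continuity condition \ref{eq:att3} is immediate from continuity of $f_{1/\mstep}$. For \ref{eq:att2}, combining the sufficient decrease above with the two-sided smoothness estimate of Lemma~\ref{lem_prop_moreau} produces a quadratic inequality in $\|\nabla f_{1/\mstep}(x_t)\|$ yielding $\|\nabla f_{1/\mstep}(x_t)\| \leq C\|x_{t+1} - x_t\|$; pairing this with Lipschitz continuity of $\nabla f_{1/\mstep}$ delivers the required bound on $\|\nabla f_{1/\mstep}(x_{t+1})\|$. Theorem~\ref{thm:attouch} then yields convergence of $\{x_t\}$ to a critical point of $f_{1/\mstep}$---hence of $f$---together with the finite-length property. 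The main obstacle is the delicate balancing of the constants $\functionweak$, $\modelweak$, and $\approxacc$ in the sufficient-decrease step: the specific formula for $\mstep$ and the lower bound on $\algstep$ are precisely what is required to ensure that damping by $\alpha \leq 1$ suffices to annihilate the undesirable $\|y_t - x_t\|^2$ contribution produced by the smoothness expansion.
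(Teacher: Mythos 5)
Your route is the paper's: verify \ref{eq:att1}--\ref{eq:att3} for $g=f_{1/\mstep}$ along $\{x_t\}$, invoke Theorem~\ref{thm:attouch}, and transfer criticality back to $f$ via Lemma~\ref{lem_prop_moreau}. Your ``key inequality'' is algebraically identical to the paper's Lemma~\ref{lem:criticalLem} (write $\hat x_t-y_t=(\hat x_t-x_t)+(x_t-y_t)$ and expand the square), and the resulting coefficient on $\|\nabla f_{1/\mstep}(x_t)\|^2$ matches the statement exactly; the only organizational difference is that the paper obtains the envelope decrease from prox-suboptimality of $\hat x_t$ at $x_{t+1}$ together with convexity of $\|\cdot\|^2$, whereas you use the $\mstep$-smoothness upper bound of the Moreau envelope (valid since $\mstep>\functionweak$). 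Both yield the same constants, so the complexity bound is handled correctly.

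One loose end: condition \ref{eq:att1} requires decrease proportional to $\|x_{t+1}-x_t\|^2$, but after you drop the $\|y_t-x_t\|^2$ term your displayed decrease is only in $\|\nabla f_{1/\mstep}(x_t)\|^2$, so \ref{eq:att1} is not literally verified. The fix is immediate from what you already have: under the strict hypothesis $\algstep>\tfrac{4\approxacc+\functionweak+\modelweak}{2}$ one has $\algstep-\mstep-\approxacc>0$, so for $\alpha\le 1$ the coefficient of $\|y_t-x_t\|^2$ is not merely non-positive but at most $-\tfrac{\alpha\mstep(\algstep-\mstep-\approxacc)}{2(\mstep+\algstep-\functionweak-\modelweak)}$; retaining that term and substituting $x_{t+1}-x_t=\alpha(y_t-x_t)$ recovers precisely the paper's two-term sufficient decrease and hence \ref{eq:att1}. (Alternatively, your key inequality with Cauchy--Schwarz gives $\|y_t-x_t\|\le \tfrac{2(\mstep+\algstep-\functionweak-\modelweak)}{2\algstep-\functionweak-\modelweak-\approxacc}\|x_t-\hat x_t\|$, so the gradient-only decrease also implies \ref{eq:att1}.) Your sketch for \ref{eq:att2}---decrease plus the two-sided envelope bounds producing a quadratic inequality in $\|\nabla f_{1/\mstep}(x_t)\|$---does work, though the shorter route is again your key inequality, which after Cauchy--Schwarz gives $\|x_t-\hat x_t\|\le \tfrac{2(\mstep+\algstep-\functionweak-\modelweak)}{2\mstep-\functionweak-\modelweak-\approxacc}\|y_t-x_t\|$ and hence $\|\nabla f_{1/\mstep}(x_t)\|\le C\|x_{t+1}-x_t\|/\alpha$ directly; combined with Lipschitz continuity of $\nabla f_{1/\mstep}$ this is exactly the paper's relative-error argument.
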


This result is new and may be of independent interest. In particular, the conclusion of the theorem extends the 
 convergence guarantees for the proximal linear method developed in \cite{pauwels2016value} to all relaxed model-based algorithms.

\subsection{Proof of Theorem~\ref{thm:model_descent}}

We are free to choose the parameter $\hat \rho$ defining the Moreau envelope. To this end, we will need
	the existence of  a parameter $\mstep$, satisfying the following inequalities. 
\begin{lem}\label{lem:deepinequalities}
Under the assumptions of Theorem~\ref{thm:model_descent}, it holds that $\mstep > \functionweak$ and
\begin{enumerate}[noitemsep]
\item \label{item:relaxparams1} $\algstep - \mstep - \approxacc > 0$,
\item \label{item:relaxparams2}$2\mstep- \functionweak - \modelweak - \approxacc > 0$,
\item $\mstep + \algstep - \functionweak - \modelweak > 0$,
\item $1 - \frac{2\mstep- \functionweak - \modelweak - \approxacc}{\mstep + \algstep - \functionweak - \modelweak  } > 0$.
\end{enumerate}
\end{lem}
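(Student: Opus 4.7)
The plan is to verify each of the four inequalities (together with the preliminary bound $\mstep > \functionweak$) by direct substitution, exploiting the explicit formula $\mstep = \tfrac{1}{2}\algstep + \tfrac{1}{4}(\functionweak+\modelweak)$ and matching each arising lower bound on $\algstep$ against one of the three summands inside the hypothesis $\algstep > \max\{\modelweak,\,2\functionweak,\,(4\approxacc+\functionweak+\modelweak)/2\}$.

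First I would dispense with the preliminary bound: substituting the definition of $\mstep$ and using $\algstep > 2\functionweak$ gives $\mstep > \functionweak + \tfrac{1}{4}(\functionweak+\modelweak) > \functionweak$. Then I would check the four listed inequalities in turn. For item~(1), substitution gives $\algstep - \mstep - \approxacc = \tfrac{1}{2}\algstep - \tfrac{1}{4}(\functionweak+\modelweak) - \approxacc$, which is positive precisely when $\algstep > (4\approxacc + \functionweak + \modelweak)/2$, exactly the third term in the max. For item~(2), the identity $2\mstep = \algstep + \tfrac{1}{2}(\functionweak+\modelweak)$ reduces the claim to $\algstep > \tfrac{1}{2}(\functionweak+\modelweak) + \approxacc$, which follows from the same third term (with room to spare, since $2\approxacc > \approxacc$). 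For item~(3), substitution reduces the claim to $\algstep > \tfrac{1}{2}(\functionweak+\modelweak)$, which is again implied by $\algstep > (4\approxacc+\functionweak+\modelweak)/2$. Finally, item~(4) is equivalent to $\algstep - \mstep + \approxacc > 0$, which rearranges to $\tfrac{1}{2}\algstep + \approxacc > \tfrac{1}{4}(\functionweak+\modelweak)$; this is immediate from item~(1) once one drops the $-\approxacc$ term.

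There is no serious obstacle here: the entire lemma is a sequence of elementary arithmetic verifications, and the hypothesis on $\algstep$ has been arranged precisely so that each of its three components handles one of the required inequalities. The only thing to be careful about is bookkeeping: rewriting $\mstep$ consistently as $\tfrac{1}{2}\algstep + \tfrac{1}{4}(\functionweak+\modelweak)$ (or, equivalently, $2\mstep - \tfrac{1}{2}(\functionweak+\modelweak) = \algstep$) before each substitution keeps the algebra clean and makes the matching between the four target inequalities and the three terms in $\max\{\modelweak, 2\functionweak, (4\approxacc+\functionweak+\modelweak)/2\}$ transparent.
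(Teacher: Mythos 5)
Your proof is correct and follows essentially the same route as the paper: a direct arithmetic verification obtained by substituting $\mstep=\tfrac{1}{2}\algstep+\tfrac{1}{4}(\functionweak+\modelweak)$ and matching each inequality against the hypothesis $\algstep>\max\{\modelweak,2\functionweak,(4\approxacc+\functionweak+\modelweak)/2\}$ (the paper merely packages the same algebra through an auxiliary slack $\varepsilon=(2\algstep-4\approxacc-\functionweak-\modelweak)/2$ and deduces items (3)--(4) from (1)--(2)). Your reduction of item (4) correctly uses the positivity of the denominator from item (3), so no step is missing.
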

\begin{proof}
Note 
	that $\mstep > \algstep/2 > \functionweak >  0$ 
	and  
	that $\mstep = \algstep - \approxacc - \varepsilon/2$ for $\varepsilon =  (2\algstep - 4\approxacc - \functionweak - \modelweak)/2 > 0 $. 
To prove the first inequality, notice that 
 $\tau - \mstep  - \approxacc = \varepsilon/2 > 0$.
To prove the second inequality, notice that 
$$
2\mstep - \functionweak - \modelweak - \approxacc = 2\algstep - 4\approxacc - \functionweak - \modelweak  - \varepsilon = \varepsilon > 0.
$$
To prove the third inequality, observe
$$
\mstep + \algstep - \functionweak - \modelweak >  \mstep + (\mstep + \approxacc) - \functionweak - \modelweak \geq 2\mstep - \functionweak - \modelweak -\approxacc > 0, $$
where the first and second inequalities follow from items~\ref{item:relaxparams1} and~\ref{item:relaxparams2}, respectively. To prove the fourth inequality, we compute
$$
1 - \frac{2\mstep- \functionweak - \modelweak - \approxacc}{\mstep + \algstep - \functionweak - \modelweak  } =  \frac{\approxacc + \algstep - \mstep}{\mstep + \algstep - \functionweak - \modelweak  } = \frac{2\approxacc + \varepsilon/2}{\mstep + \algstep - \functionweak - \modelweak  }> 0,
$$
as desired.
\end{proof}
			
Throughout the rest of this section, we fix a constant $\mstep$ satisfying the conditions of Lemma~\ref{lem:deepinequalities}. 
Critical to our proof 
	is
the following lemma,
	comparing the proximal point 
$$
\hat x_t := \prox_{f/\mstep}(x_t)
$$
	to the ``approximately proximal point'' $y_t$. 
A closely 
	related
estimate
	appeared
		in~\cite[Lemma 4.2]{davis2019stochastic},
	driving the convergence analysis of that paper.
\begin{lem}\label{lem:criticalLem}
It holds that 
$$
 \|\hat x_{t} - y_t\|^2 \leq  \|\hat x_t - x_t\|^2 - \frac{2\mstep- \functionweak - \modelweak - \approxacc}{\mstep + \algstep - \functionweak - \modelweak  } \|\hat x_t - x_t\|^2  - \frac{\algstep - \mstep - \approxacc }{\mstep + \algstep - \functionweak - \modelweak }\|x_{t} - y_t\|^2.
$$
\end{lem}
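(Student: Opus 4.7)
The plan is to exploit strong convexity of both the proximal subproblem defining $\hat x_t$ and the model subproblem defining $y_t$, and then bridge the two objectives using assumption \ref{ass:model_approx}. First I would record the two quadratic growth inequalities around each respective minimizer. Since $f$ is $\functionweak$-weakly convex and Lemma~\ref{lem:deepinequalities} gives $\mstep>\functionweak$, the map $y\mapsto f(y)+\tfrac{\mstep}{2}\|y-x_t\|^2$ is $(\mstep-\functionweak)$-strongly convex with minimizer $\hat x_t$, so
$$f(y_t)+\tfrac{\mstep}{2}\|y_t-x_t\|^2 \geq f(\hat x_t)+\tfrac{\mstep}{2}\|\hat x_t-x_t\|^2+\tfrac{\mstep-\functionweak}{2}\|\hat x_t-y_t\|^2.$$
Similarly, $f_{x_t}$ is $\modelweak$-weakly convex and $\algstep>\modelweak$, so $y\mapsto f_{x_t}(y)+\tfrac{\algstep}{2}\|y-x_t\|^2$ is $(\algstep-\modelweak)$-strongly convex with minimizer $y_t$, yielding
$$f_{x_t}(\hat x_t)+\tfrac{\algstep}{2}\|\hat x_t-x_t\|^2 \geq f_{x_t}(y_t)+\tfrac{\algstep}{2}\|y_t-x_t\|^2+\tfrac{\algstep-\modelweak}{2}\|\hat x_t-y_t\|^2.$$

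Next I would add these two inequalities and collect terms. The function-value cross terms combine into $\bigl[f_{x_t}(\hat x_t)-f(\hat x_t)\bigr]+\bigl[f(y_t)-f_{x_t}(y_t)\bigr]$, which is exactly where \ref{ass:model_approx} enters: each bracket is at most $\tfrac{\approxacc}{2}$ times the squared distance from $x_t$ to the respective point. After this substitution I obtain
$$\tfrac{\approxacc+\algstep-\mstep}{2}\|\hat x_t-x_t\|^2+\tfrac{\approxacc+\mstep-\algstep}{2}\|y_t-x_t\|^2 \geq \tfrac{\mstep+\algstep-\functionweak-\modelweak}{2}\|\hat x_t-y_t\|^2.$$
(The sign of $\algstep-\mstep$ may be positive or negative, but the computation is purely algebraic.)

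Finally I would divide through by the coefficient $\tfrac{\mstep+\algstep-\functionweak-\modelweak}{2}$, which is positive by item~3 of Lemma~\ref{lem:deepinequalities}, and then verify that the resulting coefficient of $\|\hat x_t-x_t\|^2$ equals $1-\tfrac{2\mstep-\functionweak-\modelweak-\approxacc}{\mstep+\algstep-\functionweak-\modelweak}$ (a one-line identity upon putting the constant $1$ over a common denominator) and that the coefficient of $\|y_t-x_t\|^2$ equals $-\tfrac{\algstep-\mstep-\approxacc}{\mstep+\algstep-\functionweak-\modelweak}$. This produces the stated inequality exactly.

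There is no serious obstacle here: the only subtlety is bookkeeping the signs so that the symmetric use of the two strong convexity inequalities properly cancels the $\algstep$ versus $\mstep$ mismatch in the quadratic regularizers, and the approximation bound \ref{ass:model_approx} is used with both signs (once as $f_{x_t}-f\le \tfrac{\approxacc}{2}\|\cdot-x_t\|^2$ and once as $f-f_{x_t}\le \tfrac{\approxacc}{2}\|\cdot-x_t\|^2$). The positivity guarantees from Lemma~\ref{lem:deepinequalities} are what certify that the final coefficients are meaningful (in particular item~1 gives the nonnegativity of the $\|y_t-x_t\|^2$ contribution and item~2 gives it for $\|\hat x_t-x_t\|^2$), so the algebra never stalls.
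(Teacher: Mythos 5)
Your proposal is correct and follows essentially the same route as the paper's proof: the two strong convexity inequalities for the subproblems defining $\hat x_t$ and $y_t$, combined with the double-sided model bound \ref{ass:model_approx}, lead to exactly the paper's intermediate estimate $\tfrac{\mstep+\algstep-\functionweak-\modelweak}{2}\|\hat x_t-y_t\|^2 \le \tfrac{\approxacc+\algstep-\mstep}{2}\|\hat x_t-x_t\|^2+\tfrac{\mstep+\approxacc-\algstep}{2}\|x_t-y_t\|^2$, after which the division and coefficient rewriting are identical. The only difference is cosmetic ordering (you add the two inequalities before invoking \ref{ass:model_approx}, the paper substitutes the model bound into the first inequality and then applies the second), and your final coefficient identities check out.
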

\begin{proof}
Since the function $y \mapsto f(y) + \frac{\mstep}{2} \|y - x_t\|^2$ is $(\mstep - \functionweak)$-strongly convex and $\hat x_t$ is its minimizer, we have 
\begin{align*}
\frac{\mstep - \functionweak }{2} \|\hat x_{t} - y_t\|^2 &\leq \left(f(y_{t}) + \frac{\mstep}{2} \|y_{t}-x_t\|^2\right)- \left(f(\hat x_t) + \frac{\mstep}{2} \|\hat x_t - x_t\|^2\right).
\end{align*}
Consequently, using the double-sided model property \ref{ass:model_approx}, we find 
\begin{equation}\label{eqn:basicyo}
\frac{\mstep - \functionweak }{2} \|\hat x_{t} - y_t\|^2 \leq f_{x_t}(y_t) - f_{x_t}(\hat x_t) + \frac{\mstep + \approxacc}{2} \|x_t - y_{t}\|^2 - \frac{\mstep - \approxacc}{2} \|\hat x_t - x_t\|^2.
\end{equation}
Since the function $y \mapsto f_{x_t}(y) + \frac{\algstep}{2} \| y - x_t\|^2$ is $(\algstep - \modelweak)$-strongly convex and $y_t$ is its minimizer, we have
$$
 f_{x_t}(y_t) - f_{x_t}(\hat x_t)  \leq \frac{\algstep}{2}\|\hat x_t - x_t\|^2 - \frac{\algstep}{2} \|  y_t - x_t\|^2 - \frac{\algstep - \modelweak}{2} \|y_t - \hat x_t\|^2.
$$
Combining this estimate with \eqref{eqn:basicyo}, we compute 
\begin{align*}
\frac{\mstep - \rho }{2} \|\hat x_{t} - y_t\|^2 &\leq \frac{\algstep}{2}\|\hat x_t - x_t\|^2 - \frac{\algstep}{2} \|  y_t-x_{t}\|^2 - \frac{\algstep - \modelweak}{2} \|y_{t} - \hat x_t\|^2\\
&\hspace{20pt}  + \frac{\mstep + \approxacc}{2} \|x_t - y_t\|^2 - \frac{\hat \rho - \approxacc}{2} \|\hat x_t - x_t\|^2\\
&= \frac{\approxacc + \algstep - \mstep}{2}\|\hat x_t - x_t\|^2 +  \frac{\mstep + \approxacc - \algstep}{2}\|x_{t} - y_t\|^2 - \frac{\algstep - \modelweak}{2} \|y_t - \hat x_t\|^2.
\end{align*}
Rearranging, we conclude 
$$
\frac{\mstep + \algstep - \functionweak - \modelweak }{2} \|\hat x_{t} - y_t\|^2  \leq \frac{\approxacc + \algstep - \mstep}{2}\|\hat x_t - x_t\|^2 +  \frac{\mstep + \approxacc - \algstep}{2}\|x_{t} - y_t\|^2.
$$
Dividing both sides by $\frac{\mstep + \algstep - \functionweak - \modelweak }{2}$, we achieve the result: 
\begin{align*}
 \|\hat x_{t} - y_t\|^2  &\leq \frac{\approxacc + \algstep - \mstep}{\mstep + \algstep - \functionweak - \modelweak  }\|\hat x_t - x_t\|^2 +  \frac{\mstep + \approxacc - \algstep}{\mstep + \algstep - \functionweak - \modelweak }\|x_{t} - y_t\|^2\\
 &= \|\hat x_t - x_t\|^2 - \left(1 - \frac{\approxacc + \algstep - \mstep}{\mstep + \algstep - \functionweak - \modelweak } \right)\|\hat x_t - x_t\|^2  + \frac{\mstep + \approxacc - \algstep}{\mstep + \algstep - \functionweak - \modelweak  }\|x_{t} - y_t\|^2\\
 &= \|\hat x_t - x_t\|^2 - \frac{2\mstep- \functionweak - \modelweak - \approxacc}{\mstep + \algstep - \functionweak - \modelweak  } \|\hat x_t - x_t\|^2  - \frac{\algstep - \mstep - \approxacc }{\mstep + \algstep - \functionweak - \modelweak }\|x_{t} - y_t\|^2.
\end{align*}
This completes the proof of the lemma.
\end{proof}

The following lemma verifies the Assumption~\ref{eq:att1}.
\begin{lem}[Sufficient Decrease]
We have
$$
f_{1/\mstep} (x_{t+1}) \leq f_{1/\mstep}(x_t) - \frac{\mstep(\algstep - \mstep - \approxacc )}{2\alpha(\mstep + \algstep - \functionweak - \modelweak )}\|x_{t+1} - x_t\|^2 - \frac{\alpha (2\mstep- \functionweak - \modelweak - \approxacc)}{2\mstep(\mstep + \algstep - \functionweak - \modelweak ) } \| \nabla f_{1/\mstep}(x_t)\|^2.
$$
In particular, $f_{1/\mstep}$ and $\{x_t\}$ satisfy~\ref{eq:att1}. Moreover, for all $T \geq 0$, we have 
$$
\min_{t=0, \ldots, T}  \|\nabla f_{1/\mstep}(x_t)\|^2 \leq \frac{1}{T+1}\sum_{t=0}^T \|\nabla f_{1/\mstep}(x_t)\|^2 \leq \frac{f_{1/\mstep}(x_0) - \inf f}{\frac{\alpha (2\mstep- \functionweak - \modelweak - \approxacc)}{2\mstep(\mstep + \algstep - \functionweak - \modelweak ) }(T+1)}
$$
\end{lem}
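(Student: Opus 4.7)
The plan is to compare $f_{1/\mstep}(x_{t+1})$ to $f_{1/\mstep}(x_t)$ by exploiting the variational definition of the Moreau envelope together with Lemma~\ref{lem:criticalLem}. The key observation is that the proximal point $\hat x_t = \prox_{f/\mstep}(x_t)$ is a \emph{feasible} (though not necessarily optimal) point for the envelope problem at any nearby point. Therefore, starting from the definitions
$$
f_{1/\mstep}(x_t) = f(\hat x_t) + \tfrac{\mstep}{2}\|\hat x_t - x_t\|^2\qquad\text{and}\qquad f_{1/\mstep}(x_{t+1}) \leq f(\hat x_t) + \tfrac{\mstep}{2}\|\hat x_t - x_{t+1}\|^2,
$$
I would subtract to obtain the basic inequality
$$
f_{1/\mstep}(x_{t+1}) - f_{1/\mstep}(x_t) \leq \tfrac{\mstep}{2}\bigl(\|\hat x_t - x_{t+1}\|^2 - \|\hat x_t - x_t\|^2\bigr).
$$

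Next, I would use the update rule $x_{t+1} = (1-\alpha)x_t + \alpha y_t$ to write
$$
\hat x_t - x_{t+1} = (1-\alpha)(\hat x_t - x_t) + \alpha(\hat x_t - y_t).
$$
Applying convexity of $\|\cdot\|^2$ (since $\alpha \in (0,1]$), this yields
$$
\|\hat x_t - x_{t+1}\|^2 \leq (1-\alpha)\|\hat x_t - x_t\|^2 + \alpha\|\hat x_t - y_t\|^2,
$$
and hence
$$
\|\hat x_t - x_{t+1}\|^2 - \|\hat x_t - x_t\|^2 \leq \alpha\bigl(\|\hat x_t - y_t\|^2 - \|\hat x_t - x_t\|^2\bigr).
$$
Plugging in Lemma~\ref{lem:criticalLem} gives
$$
\|\hat x_t - x_{t+1}\|^2 - \|\hat x_t - x_t\|^2 \leq -\alpha\cdot\frac{2\mstep - \functionweak - \modelweak - \approxacc}{\mstep + \algstep - \functionweak - \modelweak}\|\hat x_t - x_t\|^2 - \alpha\cdot\frac{\algstep - \mstep - \approxacc}{\mstep + \algstep - \functionweak - \modelweak}\|x_t - y_t\|^2.
$$

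To convert this into the desired form, I would use the two identities
$$
\|\hat x_t - x_t\|^2 = \mstep^{-2}\|\nabla f_{1/\mstep}(x_t)\|^2 \qquad\text{and}\qquad \|x_t - y_t\|^2 = \alpha^{-2}\|x_{t+1} - x_t\|^2,
$$
the first coming from~\eqref{eqn:grad_moreau} and the second from the relaxation step. Substituting these and multiplying through by $\mstep/2$ reproduces the stated sufficient decrease exactly, with both the positivity of the coefficients and the applicability of Lemma~\ref{lem:criticalLem} guaranteed by Lemma~\ref{lem:deepinequalities}. For the final averaged gradient bound, I would telescope the inequality over $t=0,\dots,T$, drop the nonnegative $\|x_{t+1}-x_t\|^2$ terms, and use the trivial lower bound $f_{1/\mstep}(x_{T+1}) \geq \inf f$.

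The only mildly delicate step is the convexity-of-square argument in the second paragraph, which is where the restriction $\alpha \in (0,1]$ enters essentially; everything else is bookkeeping once Lemma~\ref{lem:criticalLem} is in hand.
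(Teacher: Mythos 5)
Your proposal is correct and follows essentially the same route as the paper: bound $f_{1/\mstep}(x_{t+1})$ by using $\hat x_t$ as a suboptimal candidate in the envelope, decompose $\hat x_t - x_{t+1}$ via the update rule, invoke convexity of the squared norm and Lemma~\ref{lem:criticalLem}, then rewrite via $\nabla f_{1/\mstep}(x_t)=\mstep(x_t-\hat x_t)$ and $x_t-y_t=(x_{t+1}-x_t)/\alpha$, with the averaged bound obtained by telescoping. The constants check out exactly, so this is the paper's argument in mildly rearranged form.
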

\begin{proof}
We successively compute  
\begin{align}
f_{1/\mstep} (x_{t+1}) &= f(\hat x_{t+1}) + \frac{\mstep}{2} \|\hat x_{t+1} - x_{t+1}\|^2\notag\\
&\leq  f( \hat x_{t}) + \frac{\mstep}{2} \|\hat x_{t} - x_{t+1}\|^2 \notag\\
&=  f( \hat x_{t}) + \frac{\mstep}{2} \|(1-\alpha)(\hat x_{t} - x_{t}) + \alpha(\hat x_{t} - y_t) \|^2 \notag\\
&\leq f( \hat x_{t})  +  \frac{\mstep (1-\alpha)}{2} \|\hat x_{t} - x_t\|^2 +  \frac{\mstep\alpha}{2} \|\hat x_{t} - y_t\|^2\notag\\
&\leq   f( \hat x_{t}) + \frac{\mstep}{2} \|\hat x_{t} - x_t\|^2 \notag\\
&\qquad\quad- \frac{\mstep\alpha}{2} \left(  \frac{2\mstep- \functionweak - \modelweak - \approxacc}{\mstep + \algstep - \functionweak - \modelweak  } \|\hat x_t - x_t\|^2  +\frac{\algstep - \mstep - \approxacc }{\mstep + \algstep - \functionweak - \modelweak }\|x_{t} - y_t\|^2 \right) \label{eqn:put_in_eq}\\
&\leq  f_{1/\mstep}(x_t) - \frac{\mstep\alpha(\algstep - \mstep - \approxacc )}{2(\mstep + \algstep - \functionweak - \modelweak )}\|x_{t} - y_t\|^2 - \frac{\alpha (2\mstep- \functionweak - \modelweak - \approxacc)}{2\mstep(\mstep + \algstep - \functionweak - \modelweak ) } \| \nabla f_{1/\mstep}(x_t)\|^2,\notag
\end{align}
where \eqref{eqn:put_in_eq} 
	follows 
from Lemma~\ref{lem:criticalLem}, 
		and 
the final inequality 
	follows 
since $\mstep (x_t - \hat x_t) = \nabla f_{1/\mstep}(x_t)$. 
To get the descent inequality, it remains to write $x_t - y_t = (x_{t+1} - x_t)/\alpha$.
Finally, the bound on the average gradient norm follows by induction.
\end{proof}

The following Lemma verifies the Assumption~\ref{eq:att2}.
\begin{lem}[Relative Error]
It holds 
$$
\|\nabla f_{1/\mstep}(x_{t+1})\|  \leq \left( \max\left\{\hat \rho, \frac{\rho}{1-\rho/\hat \rho}\right\} + \frac{\mstep}{\alpha}\frac{1}{1 - \sqrt{\left(1 - \frac{2\mstep- \functionweak - \modelweak - \approxacc}{\mstep + \algstep - \functionweak - \modelweak  }\right)}}\right)\|x_{t+1} - x_t \|.
$$
In particular, $f_{1/\mstep}$ and $\{x_t\}$ satisfy~\ref{eq:att2}.
\end{lem}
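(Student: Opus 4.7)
The plan is to bound $\|\nabla f_{1/\mstep}(x_{t+1})\|$ by splitting it via the triangle inequality into a ``Lipschitz term'' evaluated between $x_t$ and $x_{t+1}$, plus a ``residual term'' at $x_t$, and then to control each piece separately. Concretely, write
\begin{equation*}
\|\nabla f_{1/\mstep}(x_{t+1})\| \;\leq\; \|\nabla f_{1/\mstep}(x_{t+1}) - \nabla f_{1/\mstep}(x_t)\| + \|\nabla f_{1/\mstep}(x_t)\|.
\end{equation*}
For the first summand I would invoke Lemma~\ref{lem_prop_moreau}\eqref{it3:moreau} with $\mu = 1/\mstep$, which gives that $\nabla f_{1/\mstep}$ is Lipschitz with constant $\max\{\mstep,\functionweak/(1-\functionweak/\mstep)\}$; this contributes exactly the first term in the stated bound multiplied by $\|x_{t+1}-x_t\|$.

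For the residual term, the identity $\nabla f_{1/\mstep}(x_t) = \mstep(x_t - \hat x_t)$ from Lemma~\ref{lem_prop_moreau}\eqref{it1:moreau} reduces the task to bounding $\|x_t - \hat x_t\|$ by $\|x_{t+1}-x_t\|$. Here I would exploit Lemma~\ref{lem:criticalLem}: dropping the (nonpositive) last term on its right-hand side yields
\begin{equation*}
\|\hat x_t - y_t\|^2 \;\leq\; \left(1 - \frac{2\mstep-\functionweak-\modelweak-\approxacc}{\mstep+\algstep-\functionweak-\modelweak}\right)\|\hat x_t - x_t\|^2,
\end{equation*}
and, denoting the square root of the bracketed quantity by $c$, Lemma~\ref{lem:deepinequalities}(4) guarantees $c \in [0,1)$. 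A single application of the triangle inequality then gives
\begin{equation*}
\|\hat x_t - x_t\| \;\leq\; \|\hat x_t - y_t\| + \|y_t - x_t\| \;\leq\; c\|\hat x_t - x_t\| + \|y_t - x_t\|,
\end{equation*}
so $\|\hat x_t - x_t\| \leq (1-c)^{-1}\|y_t - x_t\|$. Finally, the update rule $x_{t+1} - x_t = \alpha(y_t - x_t)$ converts this to $\|\hat x_t - x_t\| \leq \alpha^{-1}(1-c)^{-1}\|x_{t+1}-x_t\|$, whence
\begin{equation*}
\|\nabla f_{1/\mstep}(x_t)\| \;=\; \mstep\|\hat x_t - x_t\| \;\leq\; \frac{\mstep}{\alpha(1-c)}\|x_{t+1}-x_t\|.
\end{equation*}
Adding the two bounds reproduces the displayed inequality, and verifying~\ref{eq:att2} is then immediate with $b$ equal to the bracketed prefactor.

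There is no real obstacle here: the heavy lifting is already encapsulated in Lemma~\ref{lem:criticalLem} and in the Moreau-envelope Lipschitz estimate of Lemma~\ref{lem_prop_moreau}. The only point that requires a moment's care is noticing that discarding the negative $\|x_t-y_t\|^2$ term in Lemma~\ref{lem:criticalLem} is precisely what exposes a geometric-type contraction between $\hat x_t$ and $y_t$ around $x_t$, which in turn allows the triangle-inequality trick to pass from the unknown quantity $\|\hat x_t - x_t\|$ to the known quantity $\|y_t - x_t\| = \alpha^{-1}\|x_{t+1}-x_t\|$.
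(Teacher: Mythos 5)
Your proposal is correct and follows essentially the same route as the paper: triangle inequality plus the Lipschitz constant of $\nabla f_{1/\mstep}$ from Lemma~\ref{lem_prop_moreau}, then bounding $\mstep\|\hat x_t-x_t\|$ via Lemma~\ref{lem:criticalLem} (dropping the nonnegative last term), the triangle-inequality rearrangement, and the identity $x_{t+1}-x_t=\alpha(y_t-x_t)$. The only cosmetic quibble is that $c<1$ comes from items (2)--(3) of Lemma~\ref{lem:deepinequalities} (item (4) gives $c^2>0$), but this does not affect the argument.
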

\begin{proof}
We have 
$$
\|\nabla f_{1/\mstep}(x_{t+1})\| \leq \|\nabla f_{1/\mstep}(x_{t})\| + \max\left\{\hat \rho, \frac{\rho}{1-\rho/\hat \rho}\right\} \|x_{t+1} - x_t\|.
$$
Thus, we want to bound 
$$
 \|\nabla f_{1/\mstep}(x_{t})\| = \mstep \|\hat x_t - x_t\|
$$
by a multiple of $\|x_{t+1} - x_t\|$. This follows by Lemma~\ref{lem:criticalLem}: 
$$
\|\hat x_t - x_t\| \leq \| \hat x_t - y_t\|+ \|y_t - x_t\| \leq \sqrt{\left(1 - \frac{2\mstep- \functionweak - \modelweak - \approxacc}{\mstep + \algstep - \functionweak - \modelweak  }\right)}\|x_t - \hat x_t\| + \|y_t - x_t\|
$$
Rearranging and using the definition $x_t - y_t = (x_{t+1} - x_t)/\alpha$, it holds
$$
\|\hat x_t - x_t \| \leq \frac{1}{1 - \sqrt{\left(1 - \frac{2\mstep- \functionweak - \modelweak - \approxacc}{\mstep + \algstep - \functionweak - \modelweak  }\right)}} \|y_t - x_t \| = \frac{1}{\alpha}\frac{1}{1 - \sqrt{\left(1 - \frac{2\mstep- \functionweak - \modelweak - \approxacc}{\mstep + \algstep - \functionweak - \modelweak  }\right)}} \|x_{t+1} - x_t \|.
$$
The proof is complete.
as desired.\end{proof}

Finally, we can dispense with Assumption~\ref{eq:att3}, which is a simple consequence of the continuity of $f_{\mstep}$. 
\begin{lem}[Continuity Condition]
	The function $f_{\mstep}$ and the sequence $\{x_t\}$ satisfy~\ref{eq:att3}.
\end{lem}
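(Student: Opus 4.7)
The plan is to leverage the $C^1$-smoothness of the Moreau envelope, which makes the continuity condition essentially trivial once a cluster point is in hand. Note that the statement of Theorem~\ref{thm:model_descent} that invokes this lemma for the K{\L} conclusion explicitly assumes that $\{x_t\}$ has a cluster point $\bar x$, so I may assume the existence of such a $\bar x$ when verifying \ref{eq:att3}.

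Given this, the argument would unfold in two short steps. First, by the definition of a cluster point, I extract a subsequence $\{x_{t_j}\}$ with $x_{t_j} \to \bar x$. Second, I appeal to Lemma~\ref{lem_prop_moreau}\ref{it1:moreau}, which guarantees that the Moreau envelope $f_{1/\mstep}$ is $C^1$-smooth (and in particular continuous) whenever the envelope parameter $1/\mstep$ is strictly below the weak convexity threshold $\functionweak^{-1}$. Continuity then immediately yields $f_{1/\mstep}(x_{t_j}) \to f_{1/\mstep}(\bar x)$, which is precisely condition \ref{eq:att3} with $g = f_{1/\mstep}$, subsequence $\{z_{t_j}\} = \{x_{t_j}\}$, and limit $\tilde z = \bar x$.

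The only bookkeeping issue, and certainly not an obstacle, is verifying that $\mstep > \functionweak$ so that Lemma~\ref{lem_prop_moreau} applies. This inequality is exactly the opening assertion of Lemma~\ref{lem:deepinequalities}, which was already established under the hypothesis $\algstep > \max\{\modelweak, 2\functionweak, (4\approxacc + \functionweak + \modelweak)/2\}$ of Theorem~\ref{thm:model_descent}. Thus no further work is required, and the lemma follows.
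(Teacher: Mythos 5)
Your argument is correct and is exactly the paper's intended (and unwritten) proof: the paper simply remarks that \ref{eq:att3} is "a simple consequence of the continuity of" the Moreau envelope, and your verification—extract a subsequence converging to the assumed cluster point, invoke continuity of $f_{1/\mstep}$ from Lemma~\ref{lem_prop_moreau}, and check $\mstep>\functionweak$ via Lemma~\ref{lem:deepinequalities}—fills in precisely those details. No gaps.
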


\section*{Acknowledgements}
We thank John Duchi for his insightful comments on an early version of the manuscript. We also thank the anonymous referees for numerous suggestions that have improved the readability of the paper.

			\bibliographystyle{plain}
	\bibliography{bibliography}

\def\cfac#1{\ifmmode\setbox7\hbox{$\accent"5E#1$}\else
  \setbox7\hbox{\accent"5E#1}\penalty 10000\relax\fi\raise 1\ht7
  \hbox{\lower1.15ex\hbox to 1\wd7{\hss\accent"13\hss}}\penalty 10000
  \hskip-1\wd7\penalty 10000\box7}
\begin{thebibliography}{10}

\bibitem{Al-Khayyal-Kyparisis91}
F.~Al-Khayyal and J.~Kyparisis.
\newblock Finite convergence of algorithms for nonlinear programs and
  variational inequalities.
\newblock {\em J. Optim. Theory Appl.}, 70(2):319--332, 1991.

\bibitem{semiconcave}
P.~Albano and P.~Cannarsa.
\newblock Singularities of semiconcave functions in {B}anach spaces.
\newblock In {\em Stochastic analysis, control, optimization and applications},
  Systems Control Found. Appl., pages 171--190. Birkh\"{a}user Boston, Boston,
  MA, 1999.

\bibitem{attouch2013convergence}
H.~Attouch, J.~Bolte, and B.F. Svaiter.
\newblock Convergence of descent methods for semi-algebraic and tame problems:
  proximal algorithms, forward--backward splitting, and regularized
  gauss--seidel methods.
\newblock {\em Mathematical Programming}, 137(1-2):91--129, 2013.

\bibitem{noisysticky}
D.~Avdiukhin, c.~Jin, and G.~Yaroslavtsev.
\newblock Escaping saddle points with inequality constraints via noisy sticky
  projected gradient descent.
\newblock {\em Optimization for Machine Learning Workshop}, 2019.

\bibitem{beck}
A.~Beck and M.~Teboulle.
\newblock A fast iterative shrinkage-thresholding algorithm for linear inverse
  problems.
\newblock {\em SIAM J. Imaging Sci.}, 2(1):183--202, 2009.

\bibitem{bhojanapalli2016global}
S.~Bhojanapalli, B.~Neyshabur, and N.~Srebro.
\newblock Global optimality of local search for low rank matrix recovery.
\newblock In {\em Advances in Neural Information Processing Systems}, pages
  3873--3881, 2016.

\bibitem{bolte2007clarke}
J.~Bolte, A.~Daniilidis, A.~Lewis, and M.~Shiota.
\newblock Clarke subgradients of stratifiable functions.
\newblock {\em SIAM Journal on Optimization}, 18(2):556--572, 2007.

\bibitem{Bon_Shap}
J.F. Bonnans and A.~Shapiro.
\newblock {\em Perturbation Analysis of Optimization Problems}.
\newblock Springer, New York, 2000.

\bibitem{burke_com}
J.V. Burke.
\newblock Descent methods for composite nondifferentiable optimization
  problems.
\newblock {\em Math. Programming}, 33(3):260--279, 1985.

\bibitem{Burke90}
J.V. Burke.
\newblock On the identification of active constraints. {II}.\ {T}he nonconvex
  case.
\newblock {\em SIAM J. Numer. Anal.}, 27(4):1081--1103, 1990.

\bibitem{Burke-More88}
J.V. Burke and J.J. Mor{\'e}.
\newblock On the identification of active constraints.
\newblock {\em SIAM J. Numer. Anal.}, 25(5):1197--1211, 1988.

\bibitem{Calamai-More87}
P.H. Calamai and J.J. Mor{\'e}.
\newblock Projected gradient methods for linearly constrained problems.
\newblock {\em Math. Prog.}, 39(1):93--116, 1987.

\bibitem{charisopoulos2019low}
V.~Charisopoulos, Y.~Chen, D.~Davis, M.~D{\'\i}az, L.~Ding, and
  D.~Drusvyatskiy.
\newblock Low-rank matrix recovery with composite optimization: good
  conditioning and rapid convergence.
\newblock {\em arXiv preprint arXiv:1904.10020}, 2019.

\bibitem{CLSW}
F.H. Clarke, Yu. Ledyaev, R.I. Stern, and P.R. Wolenski.
\newblock {\em Nonsmooth Analysis and Control Theory}.
\newblock Texts in Math. 178, Springer, New York, 1998.

\bibitem{criscitiello2019efficiently}
C.~Criscitiello and N.~Boumal.
\newblock Efficiently escaping saddle points on manifolds.
\newblock {\em arXiv preprint arXiv:1906.04321}, 2019.

\bibitem{davis2019stochastic}
D.~Davis and D.~Drusvyatskiy.
\newblock Stochastic model-based minimization of weakly convex functions.
\newblock {\em SIAM Journal on Optimization}, 29(1):207--239, 2019.

\bibitem{drusvyatskiy2017proximal}
D.~Drusvyatskiy.
\newblock The proximal point method revisited.
\newblock {\em SIAG/OPT Views and News}, 26(2), 2018.

\bibitem{drusvyatskiy2016generic}
D.~Drusvyatskiy, A.D. Ioffe, and A.S. Lewis.
\newblock Generic minimizing behavior in semialgebraic optimization.
\newblock {\em SIAM Journal on Optimization}, 26(1):513--534, 2016.

\bibitem{darxiv}
D.~Drusvyatskiy and A.S. Lewis.
\newblock Optimality, identifiability, and sensitivity.
\newblock {\em arXiv:1207.6628}, 2012.

\bibitem{ident}
D.~Drusvyatskiy and A.S. Lewis.
\newblock Optimality, identifiablity, and sensitivity.
\newblock {\em Math. Program.}, 147(1-2, Ser. A):467--498, 2014.

\bibitem{prox_error}
D.~Drusvyatskiy and A.S. Lewis.
\newblock Error bounds, quadratic growth, and linear convergence of proximal
  methods.
\newblock {\em To appear in Math. Oper. Res., arXiv:1602.06661}, 2016.

\bibitem{comp_DP}
D.~Drusvyatskiy and C.~Paquette.
\newblock Efficiency of minimizing compositions of convex functions and smooth
  maps.
\newblock {\em Mathematical Programming}, 178(1-2):503--558, 2019.

\bibitem{du2017gradient}
S.S. Du, C.~Jin, J.D. Lee, M.I. Jordan, A.~Singh, and B.~Poczos.
\newblock Gradient descent can take exponential time to escape saddle points.
\newblock In {\em Advances in neural information processing systems}, pages
  1067--1077, 2017.

\bibitem{duchi2018stochastic}
J.C. Duchi and F.~Ruan.
\newblock Stochastic methods for composite and weakly convex optimization
  problems.
\newblock {\em SIAM Journal on Optimization}, 28(4):3229--3259, 2018.

\bibitem{Dunn87}
J.C. Dunn.
\newblock On the convergence of projected gradient processes to singular
  critical points.
\newblock {\em J. Optim. Theory Appl.}, 55(2):203--216, 1987.

\bibitem{Ferris91}
M.C. Ferris.
\newblock Finite termination of the proximal point algorithm.
\newblock {\em Math. Program.}, 50(3, (Ser. A)):359--366, 1991.

\bibitem{Flam92}
S.D. Fl{\aa}m.
\newblock On finite convergence and constraint identification of subgradient
  projection methods.
\newblock {\em Math. Program.}, 57:427--437, 1992.

\bibitem{ge2015escaping}
R.~Ge, F.~Huang, C.~Jin, and Y.~Yuan.
\newblock Escaping from saddle points—online stochastic gradient for tensor
  decomposition.
\newblock In {\em Conference on Learning Theory}, pages 797--842, 2015.

\bibitem{ge2017no}
R.~Ge, C.~Jin, and Y.~Zheng.
\newblock No spurious local minima in nonconvex low rank problems: A unified
  geometric analysis.
\newblock In {\em Proceedings of the 34th International Conference on Machine
  Learning-Volume 70}, pages 1233--1242. JMLR. org, 2017.

\bibitem{ge2016matrix}
R.~Ge, J.D. Lee, and T.~Ma.
\newblock Matrix completion has no spurious local minimum.
\newblock In {\em Advances in Neural Information Processing Systems}, pages
  2973--2981, 2016.

\bibitem{hallakfinding}
N.~Hallak and M.~Teboulle.
\newblock Finding second-order stationary points in constrained minimization: A
  feasible direction approach.
\newblock {\em www.optimization-online.org}.

\bibitem{hare_alg}
W.L. Hare and A.S. Lewis.
\newblock Identifying active manifolds.
\newblock {\em Algorithmic Oper. Res.}, 2(2):75--82, 2007.

\bibitem{jin2019local}
C.~Jin, P.~Netrapalli, and M.I. Jordan.
\newblock What is local optimality in nonconvex-nonconcave minimax
  optimization?
\newblock {\em arXiv preprint arXiv:1902.00618}, 2019.

\bibitem{jin2017escape}
R.~Jin, C.and~Ge, P.~Netrapalli, S.M. Kakade, and M.I. Jordan.
\newblock How to escape saddle points efficiently.
\newblock In {\em Proceedings of the 34th International Conference on Machine
  Learning-Volume 70}, pages 1724--1732. JMLR. org, 2017.

\bibitem{Lee:2019:FMA:3349830.3349888}
J.D. Lee, I.~Panageas, G.~Piliouras, M.~Simchowitz, M.I. Jordan, and B.~Recht.
\newblock First-order methods almost always avoid strict saddle points.
\newblock {\em Math. Program.}, 176(1-2):311--337, July 2019.

\bibitem{gradient_descent_jason}
J.D. Lee, M.~Simchowitz, M.I. Jordan, and B.~Recht.
\newblock Gradient descent only converges to minimizers.
\newblock In {\em Conference on learning theory}, pages 1246--1257, 2016a.

\bibitem{lee2013smooth}
J.M. Lee.
\newblock {\em Introduction to smooth manifolds}, volume 218 of {\em Graduate
  Texts in Mathematics}.
\newblock Springer, New York, second edition, 2013.

\bibitem{lee2012manifold}
Sangkyun Lee and Stephen~J Wright.
\newblock Manifold identification in dual averaging for regularized stochastic
  online learning.
\newblock {\em Journal of Machine Learning Research}, 13(Jun):1705--1744, 2012.

\bibitem{LOS}
C.~Lemar\'{e}chal, F.~Oustry, and C.~Sagastiz\'{a}bal.
\newblock The {U}-lagrangian of a convex function.
\newblock {\em Trans. Amer. Math. Soc.}, 352:711--729, 1996.

\bibitem{lewis_active}
A.S. Lewis.
\newblock Active sets, nonsmoothness, and sensitivity.
\newblock {\em SIAM J. Optim.}, 13(3):702--725 (electronic) (2003), 2002.

\bibitem{prox}
A.S. Lewis and S.J. Wright.
\newblock A proximal method for composite minimization.
\newblock {\em Math. Program.}, pages 1--46, 2015.

\bibitem{lewis2013partial}
A.S. Lewis and S.~Zhang.
\newblock Partial smoothness, tilt stability, and generalized {H}essians.
\newblock {\em SIAM Journal on Optimization}, 23(1):74--94, 2013.

\bibitem{MR0298899}
B.~Martinet.
\newblock R\'{e}gularisation d'in\'{e}quations variationnelles par
  approximations successives.
\newblock {\em Rev. Fran\c{c}aise Informat. Rech. Op\'{e}rationnelle},
  4(S\'{e}r. {\rm R}-3):154--158, 1970.

\bibitem{MR0290213}
B.~Martinet.
\newblock D\'{e}termination approch\'{e}e d'un point fixe d'une application
  pseudo-contractante. {C}as de l'application prox.
\newblock {\em C. R. Acad. Sci. Paris S\'{e}r. A-B}, 274:A163--A165, 1972.

\bibitem{mokhtari2018escaping}
A.~Mokhtari, A.~Ozdaglar, and A.~Jadbabaie.
\newblock Escaping saddle points in constrained optimization.
\newblock In {\em Advances in Neural Information Processing Systems}, pages
  3629--3639, 2018.

\bibitem{mord1}
B.S. Mordukhovich.
\newblock {\em Variational analysis and generalized differentiation. {I}},
  volume 330 of {\em Grundlehren der Mathematischen Wissenschaften [Fundamental
  Principles of Mathematical Sciences]}.
\newblock Springer-Verlag, Berlin, 2006.
\newblock Basic theory.

\bibitem{MR0201952}
J.-J. Moreau.
\newblock Proximit\'{e} et dualit\'{e} dans un espace hilbertien.
\newblock {\em Bull. Soc. Math. France}, 93:273--299, 1965.

\bibitem{nesterov2007modified}
Yu. Nesterov.
\newblock Modified gauss--newton scheme with worst case guarantees for global
  performance.
\newblock {\em Optimisation Methods and Software}, 22(3):469--483, 2007.

\bibitem{nesterov2013gradient}
Yu. Nesterov.
\newblock Gradient methods for minimizing composite functions.
\newblock {\em Mathematical Programming}, 140(1):125--161, 2013.

\bibitem{nouiehed2018convergence}
M.~Nouiehed, J.D. Lee, and M.~Razaviyayn.
\newblock Convergence to second-order stationarity for constrained non-convex
  optimization.
\newblock {\em arXiv preprint arXiv:1810.02024}, 2018.

\bibitem{Nurminskii1973}
E.A. Nurminskii.
\newblock The quasigradient method for the solving of the nonlinear programming
  problems.
\newblock {\em Cybernetics}, 9(1):145--150, Jan 1973.

\bibitem{panageas2016gradient}
I.~Panageas and G.~Piliouras.
\newblock Gradient descent only converges to minimizers: Non-isolated critical
  points and invariant regions.
\newblock {\em arXiv preprint arXiv:1605.00405}, 2016.

\bibitem{pauwels2016value}
E.~Pauwels.
\newblock The value function approach to convergence analysis in composite
  optimization.
\newblock {\em Operations Research Letters}, 44(6):790--795, 2016.

\bibitem{prox_reg}
R.A. Poliquin and R.T. Rockafellar.
\newblock Prox-regular functions in variational analysis.
\newblock {\em Trans. Amer. Math. Soc.}, 348:1805--1838, 1996.

\bibitem{con_ter}
R.T. Rockafellar.
\newblock {\em Convex analysis}.
\newblock Princeton Mathematical Series, No. 28. Princeton University Press,
  Princeton, N.J., 1970.

\bibitem{MR0410483}
R.T. Rockafellar.
\newblock Monotone operators and the proximal point algorithm.
\newblock {\em SIAM J. Control Optimization}, 14(5):877--898, 1976.

\bibitem{fav_C2}
R.T. Rockafellar.
\newblock Favorable classes of {L}ipschitz-continuous functions in subgradient
  optimization.
\newblock In {\em Progress in nondifferentiable optimization}, volume~8 of {\em
  IIASA Collaborative Proc. Ser. CP-82}, pages 125--143. Int. Inst. Appl. Sys.
  Anal., Laxenburg, 1982.

\bibitem{RW98}
R.T. Rockafellar and R.J-B. Wets.
\newblock {\em Variational {A}nalysis}.
\newblock Grundlehren der mathematischen Wissenschaften, Vol 317, Springer,
  Berlin, 1998.

\bibitem{paraconvex}
S.~Rolewicz.
\newblock On paraconvex multifunctions.
\newblock In {\em Third {S}ymposium on {O}perations {R}esearch ({U}niv.
  {M}annheim, {M}annheim, 1978), {S}ection {I}}, volume~31 of {\em Operations
  Res. Verfahren}, pages 539--546. Hain, K\"{o}nigstein/Ts., 1979.

\bibitem{Shapiro1985}
A.~Shapiro.
\newblock Second order sensitivity analysis and asymptotic theory of
  parametrized nonlinear programs.
\newblock {\em Mathematical Programming}, 33(3):280--299, Dec 1985.

\bibitem{shub2013global}
M.~Shub.
\newblock {\em Global stability of dynamical systems}.
\newblock Springer Science \& Business Media, 2013.

\bibitem{sun2015nonconvex}
J.~Sun, Q.~Qu, and J.~Wright.
\newblock When are nonconvex problems not scary?
\newblock {\em arXiv preprint arXiv:1510.06096}, 2015.

\bibitem{sun2018geometric}
J.~Sun, Q.~Qu, and J.~Wright.
\newblock A geometric analysis of phase retrieval.
\newblock {\em Foundations of Computational Mathematics}, 18(5):1131--1198,
  2018.

\bibitem{sun2019escaping}
Y.~Sun, N.~Flammarion, and M.~Fazel.
\newblock Escaping from saddle points on riemannian manifolds.
\newblock {\em arXiv preprint arXiv:1906.07355}, 2019.

\bibitem{Wright}
S.J. Wright.
\newblock Identifiable surfaces in constrained optimization.
\newblock {\em SIAM J. Control Optim.}, 31:1063--1079, July 1993.

\end{thebibliography}
	
\appendix
\section{Proofs of Theorem~\ref{thm:strict_saddle_hoora_gen} and~\ref{thm:strict_saddle_hoora_gen2}}\label{sec:proof_generic}
In this section, we prove Theorem~\ref{thm:strict_saddle_hoora_gen}. We should note that Theorem~\ref{thm:strict_saddle_hoora_gen}, appropriately restated, holds much more broadly beyond the weakly convex function class. To simplify the notational overhead, however, we impose the weak convexity assumption, throughout. 

 We will require some basic notation from variational analysis; for details, we refer the reader to \cite{RW98}. 
A set-valued map $F\colon\R^d\rightrightarrows \R^m$ assigns to each point $x\in \R^d$ a set $F(x)$ in $\R^m$. The graph of $F$ is defined by
$$\gph F:=\{(x,v):v\in F(x)\}.$$
 A map $F\colon\R^d\rightrightarrows \R^m$ is called {\em metrically regular at} $(\bar x,\bar v)\in \gph F$ if there exists
 a constant $\kappa>0$ such that the estimate holds:
 $$\dist(x,F^{-1}(v))\leq \kappa \dist(v,F(x))$$ 
 for all $x$ near $\bar x$ and all $v$ near $\bar v$.
 If the graph $\gph F$ is a $C^1$-smooth manifold around $(\bar x,\bar v)$, then metric regularity at $(\bar x,\bar v)$ is equivalent to the condition
\cite[Theorem 9.43(d)]{RW98}:\footnote{We should note that metric regularity of $F$ at $(\bar x,\bar v)$ is equivalent to  \eqref{eqn:met_reg_cod2} for an arbitrary set-valued map $F$ with closed graph, provided we interpret  $N_{\gph  F}(\bar x,\bar v)$ as the limiting normal cone \cite[Definition 6.3]{RW98}. }
\begin{equation}\label{eqn:met_reg_cod2}
(0,u)\in  N_{\gph  F}(\bar x,\bar v)\quad \Longrightarrow \quad u=0.
\end{equation}

We begin with the following lemma.
\begin{lem}[Subdifferential metric regularity in smooth minimization]\label{lem:basic_lem}
Consider the optimization problem 
$$\min_{x\in\R^d} f(x)\quad \textrm{subject to}\quad x\in \cM,$$
where $f\colon\R^d\to\R$ is a $C^2$-smooth function and $\cM$ is a $C^2$-smooth manifold.
Let $\bar x\in\cM$ satisfy the criticality condition $0\in \partial f_{\cM}(\bar x)$ and suppose that the subdifferential map $\partial f_{\cM}\colon\R^d\rightrightarrows\R^d$ is  metrically regular at $(\bar x,0)$. Then the guarantee holds:
\begin{equation}\label{eqn:guarant_parab_noteq}
\inf_{u\in \mathbb{S}^{d-1}\cap T_{\cM}(\bar x)} d^2 f_{\cM}(\bar x)(u)\neq 0.
\end{equation}
\end{lem}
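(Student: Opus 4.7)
The plan is to exploit the smoothness of $\gph \partial f_{\cM}$ near $(\bar x, 0)$ in order to translate metric regularity into invertibility of the Hessian of the Lagrangian restricted to the tangent space, and then deduce the conclusion by spectral considerations. Let $G=0$ be local defining equations for $\cM$ around $\bar x$, and let $\bar\lambda\in\R^{d-r}$ be the (unique, by surjectivity of $\nabla G(\bar x)$) multiplier satisfying $\nabla f(\bar x)+\nabla G(\bar x)^*\bar\lambda=0$. Set $H:=\nabla^2_{yy}\cL(\bar x,\bar\lambda)$, where $\cL(y,\lambda):=f(y)+\langle G(y),\lambda\rangle$. Recall from the discussion following \eqref{eqn:suff_cond} that
$$d^2 f_{\cM}(\bar x)(u)=\langle Hu,u\rangle\qquad \textrm{for all } u\in T_{\cM}(\bar x).$$

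Near $\bar x$ one has $\partial f_{\cM}(x)=\nabla f(x)+\range \nabla G(x)^*$, so the graph $\gph \partial f_{\cM}$ is locally the image of the injective $C^1$ map
$$\Phi\colon \cM\times \R^{d-r}\to \R^d\times\R^d,\qquad \Phi(y,\lambda):=\big(y,\nabla f(y)+\nabla G(y)^*\lambda\big).$$
In particular, $\gph \partial f_{\cM}$ is a $C^1$-smooth manifold around $(\bar x,0)$, and differentiating $\Phi$ at $(\bar x,\bar\lambda)$ in directions $(h,\mu)\in T_{\cM}(\bar x)\times \R^{d-r}$ yields the tangent space
$$T_{\gph \partial f_{\cM}}(\bar x,0)=\big\{(h,\,Hh+\nabla G(\bar x)^*\mu):\ h\in T_{\cM}(\bar x),\ \mu\in \R^{d-r}\big\}.$$
A direct orthogonality computation then gives the normal space: $(a,b)\in N_{\gph \partial f_{\cM}}(\bar x,0)$ iff $b\in T_{\cM}(\bar x)$ and $a+Hb\in N_{\cM}(\bar x)$.

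Invoking characterization \eqref{eqn:met_reg_cod2}, metric regularity of $\partial f_{\cM}$ at $(\bar x,0)$ amounts to the implication $(0,u)\in N_{\gph \partial f_{\cM}}(\bar x,0)\Rightarrow u=0$. By the normal-space description above, this is equivalent to the statement: for every $u\in T_{\cM}(\bar x)$ with $Hu\in N_{\cM}(\bar x)$ one must have $u=0$. Writing $P$ for the orthogonal projector onto $T_{\cM}(\bar x)$, this says exactly that the symmetric operator $PHP$, viewed as a linear endomorphism of $T_{\cM}(\bar x)$, is injective, hence invertible. Consequently all of its eigenvalues are nonzero, and since $d^2 f_{\cM}(\bar x)(u)=\langle PHPu,u\rangle$ for every $u\in T_{\cM}(\bar x)$, the Rayleigh quotient formula yields
$$\inf_{u\in \mathbb{S}^{d-1}\cap T_{\cM}(\bar x)} d^2 f_{\cM}(\bar x)(u)=\lambda_{\min}\big(PHP\big|_{T_{\cM}(\bar x)}\big)\neq 0,$$
which is \eqref{eqn:guarant_parab_noteq}. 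The only nontrivial step is the tangent/normal space computation for $\gph \partial f_{\cM}$; everything else reduces to routine linear algebra.
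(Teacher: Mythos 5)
Your proof is correct and follows the same skeleton as the paper's: pass from metric regularity to the normal-cone condition \eqref{eqn:met_reg_cod2}, identify membership $(0,u)\in N_{\gph \partial f_{\cM}}(\bar x,0)$ with the two conditions $u\in T_{\cM}(\bar x)$ and $Hu\in N_{\cM}(\bar x)$, and then conclude via the identification $d^2 f_{\cM}(\bar x)(u)=\langle Hu,u\rangle$ on $T_{\cM}(\bar x)$. The one genuine difference is the middle step: the paper cites \cite[Corollary 2.9]{lewis2013partial} for the formula \eqref{eqn:second_eq_metr}, whereas you derive it directly by parametrizing $\gph \partial f_{\cM}$ near $(\bar x,0)$ with the injective $C^1$ map $\Phi(y,\lambda)=(y,\nabla f(y)+\nabla G(y)^*\lambda)$ and computing the tangent and normal spaces; this also supplies the $C^1$-manifold property of the graph under which \eqref{eqn:met_reg_cod2} is invoked, so your argument is self-contained where the paper leans on the literature. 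At the final step the paper argues by contradiction---if the infimum were zero, a nonzero minimizer $\bar u$ of the then-nonnegative form $\langle Lu,u\rangle$ over $T_{\cM}(\bar x)$ would satisfy $L\bar u\in N_{\cM}(\bar x)$---while you phrase the same linear algebra directly as injectivity, hence invertibility, of $PHP$ restricted to $T_{\cM}(\bar x)$, so that its minimal eigenvalue, which equals the infimum in \eqref{eqn:guarant_parab_noteq}, is nonzero; the two arguments are equivalent, with yours marginally cleaner since it avoids appealing to attainment and first-order optimality for the restricted quadratic.
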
	
\begin{proof}
First, appealing to \eqref{eqn:met_reg_cod2}, we conclude that the implication holds:
\begin{equation}\label{eqn:met_reg_cod}
(0,u)\in  N_{\gph \partial f_{\cM}}(\bar x,0)\quad \Longrightarrow \quad u=0.
\end{equation}
Let us now interpret the condition \eqref{eqn:met_reg_cod} in Lagrangian terms. To this end,
let $G=0$ be the local defining equations for $\cM$ around $\bar x$. Define the Lagrangian function 
$$\mathcal{L}(x,\lambda)=f(x)+\langle G(x),\lambda \rangle,$$
and let $\bar \lambda$ be the unique Lagrange multiplier vector satisfying $\nabla_x \mathcal{L}(\bar x,\bar \lambda)=0$. 
According to \cite[Corollary 2.9]{lewis2013partial}, we have the following expression:
\begin{equation}\label{eqn:second_eq_metr}
(0,u)\in  N_{\gph \partial f_{\cM}}(\bar x,0)\quad \Longleftrightarrow \quad u\in T_{\cM}(\bar x)\quad \textrm{and}\quad L u \in N_{\cM}(\bar x),
\end{equation}
where $L:=\nabla^2_{xx}\cL(\bar x,\bar \lambda)$ denotes the Hessian of the Lagrangian. Combining \eqref{eqn:met_reg_cod} and  \eqref{eqn:second_eq_metr}, we  deduce that the only vector 
$u\in T_{\cM}(\bar x)$ satisfying $L u\in N_{\cM}(\bar x)$ is the zero vector $u=0$.

Now for the sake of contradiction, suppose that \eqref{eqn:guarant_parab_noteq} fails. Then the quadratic form  $Q(u)=\langle L u,u\rangle$  is nonnegative on $T_{\cM}(\bar x)$ and there exists $0\neq \bar u\in T_{\cM}(\bar x)$ satisfying $Q(\bar u)=0$.
We deduce that $\bar u$ minimizes $Q(\cdot)$ on $T_{\cM}(\bar x)$, and therefore the inclusion $L\bar u\in N_{\cM}(\bar x)$ holds, a clear contradiction.
\end{proof}

The following corollary for active manifolds will now quickly follow.

\begin{cor}[Subdifferential metric regularity and active manifolds]\label{cor:cor_for_partsmooth}
Consider a closed and weakly convex function $f\colon\R^d\to\R\cup\{\infty\}$. Suppose that $f$ admits a $C^2$-smooth active manifold around a critical point $\bar x$ and that the subdifferential map $\partial f\colon\R^d\rightrightarrows\R^d$ is  metrically regular at $(\bar x,0)$. Then $\bar x$ is either a strong local minimizer of $f$ or satisfies the curvature condition  $d^2 f_{\cM}(\bar x)(u)<0$ for some $u\in T_{\cM}(\bar x)$. 
\end{cor}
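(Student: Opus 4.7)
The plan is to localize everything to the active manifold $\cM$ so that Lemma~\ref{lem:basic_lem} applies, and then dispatch the ``no negative curvature'' alternative using the Moreau envelope identity extracted from the proof of Theorem~\ref{thm:saddle_moreau}.

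The first step is to transfer metric regularity from $\partial f$ to the restricted subdifferential $\partial f_\cM$ at $(\bar x,0)$. Since $\dom f_\cM=\cM$, every element of $\gph\partial f_\cM$ lies in $\cM\times\R^d$, and the sum rule yields $\partial f_\cM(x)=\partial f(x)+N_\cM(x)$ for $x\in\cM$ near $\bar x$. The sharpness clause of Definition~\ref{defn:ident_man} forces any $(x,v)\in\gph\partial f$ with $v$ small to satisfy $x\in\cM$, so the two graphs are tightly linked near $(\bar x,0)$. Combined with the coderivative criterion \eqref{eqn:met_reg_cod2} applied to the limiting normal cone, this relationship passes metric regularity of $\partial f$ over to $\partial f_\cM$.

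With metric regularity of $\partial f_\cM$ in hand, I apply Lemma~\ref{lem:basic_lem} to any $C^2$ extension of $f|_\cM$ to obtain
$$\inf_{u\in\mathbb{S}^{d-1}\cap T_\cM(\bar x)}d^2 f_\cM(\bar x)(u)\neq 0.$$
Because $u\mapsto d^2 f_\cM(\bar x)(u)$ is a continuous quadratic form on $T_\cM(\bar x)$ whose unit sphere is connected whenever $\dim T_\cM(\bar x)\geq 1$, the form has constant nonzero sign on $T_\cM(\bar x)\setminus\{0\}$. The degenerate zero-dimensional case is handled separately: $\bar x$ is then isolated as a critical point, so metric regularity of $\partial f$ at $(\bar x,0)$ upgrades to strong metric subregularity, which for weakly convex functions is equivalent to quadratic growth. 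If instead the form attains a strictly negative value, the stated curvature condition is immediate.

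In the remaining case, in which $d^2 f_\cM(\bar x)$ is positive definite on $T_\cM(\bar x)$, I invoke the identity
$$\langle\nabla^2 f_\mu(\bar x)h,h\rangle=\min_{u\in T_\cM(\bar x)}\bigl\{d^2 f_\cM(\bar x)(u)+\mu^{-1}\|h-u\|^2\bigr\}$$
derived inside the proof of Theorem~\ref{thm:saddle_moreau}, valid for any $\mu<\rho^{-1}$. If this minimum vanished for some $h\neq 0$, then at its minimizer $u^*$ both nonnegative summands would vanish, forcing $u^*=0$ and then $h=u^*=0$, a contradiction. Hence $\nabla^2 f_\mu(\bar x)\succ 0$, so $\bar x$ is a strong local minimizer of $f_\mu$; since $f_\mu\leq f$ with equality at the critical point $\bar x$, the point $\bar x$ is likewise a strong local minimizer of $f$. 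The principal technical obstacle is the first step above: rigorously transferring metric regularity from $\partial f$ to $\partial f_\cM$ via the active manifold structure; the remaining pieces are routine consequences of Lemma~\ref{lem:basic_lem}, the Moreau envelope identity, and the inequality $f_\mu\le f$.
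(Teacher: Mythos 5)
Your overall skeleton is the same as the paper's: transfer metric regularity from $\partial f$ to $\partial f_{\cM}$, invoke Lemma~\ref{lem:basic_lem}, and split according to the sign of the infimum. The genuine gap is precisely the step you flag as the ``principal technical obstacle'': you never actually establish the transfer. The paper handles it by citing \cite[Proposition 10.2]{darxiv}, which asserts that $\gph \partial f$ and $\gph \partial f_{\cM}$ \emph{coincide} on a neighborhood of $(\bar x,0)$; with equal graphs, metric regularity (equivalently, the coderivative criterion \eqref{eqn:met_reg_cod2}) passes over with nothing to check. Your sketch does not yield this. The sum rule $\partial f_{\cM}(x)\supseteq \partial f(x)+N_{\cM}(x)$ goes the wrong way: it makes the restricted graph \emph{larger}, and sharpness only tells you that small subgradients of $f$ occur at points of $\cM$, i.e.\ it gives a local inclusion $\gph\partial f\subseteq\gph\partial f_{\cM}$, not equality. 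A mere inclusion of graphs does not let you move the criterion \eqref{eqn:met_reg_cod2} from one map to the other, because limiting normal cones are not monotone under set inclusion (limiting normals to the larger graph are built from regular normals at nearby points that need not lie in the smaller graph). What must be argued---and is the content of the cited proposition---is that every sufficiently small element of $\partial f_{\cM}(x)=\partial f(x)+N_{\cM}(x)$ for $x\in\cM$ near $\bar x$ already lies in $\partial f(x)$. Without that, the appeal to Lemma~\ref{lem:basic_lem} is unsupported.

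Two further remarks. First, the connectedness argument (``the form has constant nonzero sign on the punctured tangent space'') is false for indefinite forms such as $u_1^2-u_2^2$; it is also unnecessary, since a nonzero infimum over the compact set $\mathbb{S}^{d-1}\cap T_{\cM}(\bar x)$ directly yields the dichotomy ``negative value attained'' versus ``positive definite,'' which is all you use. Likewise, your justification of the zero-dimensional case via ``metric subregularity of $\partial f$ is equivalent to quadratic growth for weakly convex functions'' is incorrect as stated ($f(x)=-x^2$ is subregular at $(0,0)$ with no growth); but the separate case is superfluous, since with $T_{\cM}(\bar x)=\{0\}$ your own envelope identity gives $\langle\nabla^2 f_{\mu}(\bar x)h,h\rangle=\mu^{-1}\|h\|^2>0$ and the same conclusion. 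Second, on the positive side: your final step is a genuinely different and correct route. Where the paper cites \cite[Proposition 7.2]{ident} to pass from strong minimality of $f_{\cM}$ to strong minimality of $f$, you use the identity $\langle\nabla^2 f_{\mu}(\bar x)h,h\rangle=\min_{u\in T_{\cM}(\bar x)}\{d^2f_{\cM}(\bar x)(u)+\mu^{-1}\|h-u\|^2\}$ from the proof of Theorem~\ref{thm:saddle_moreau} together with $f_{\mu}\leq f$ and $f_{\mu}(\bar x)=f(\bar x)$; this keeps the argument internal to the paper's own machinery and, as noted, absorbs the degenerate case automatically. So the proposal would be complete once the graph-coincidence step (the paper's \cite[Proposition 10.2]{darxiv}) is supplied.
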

\begin{proof}
The result \cite[Proposition 10.2]{darxiv} implies that $\gph \partial f$ coincides with $\gph \partial f_{\cM}$ on a neighborhood of $(\bar x,0)$. Therefore the subdifferential map $\partial f_{\cM}\colon\R^d\rightrightarrows\R^d$ is   metrically regular at $(\bar x,0)$. Using Lemma~\ref{lem:basic_lem}, we obtain the guarantee:
$$
\inf_{u\in \mathbb{S}^{d-1}\cap T_{\cM}(\bar x)} d^2 f_{\cM}(\bar x)(u)\neq 0.
$$
If the infimum is strictly negative, the proof is complete. Otherwise the infimum is strictly positive. In this case,  $\bar x$ is a strong local minimizer of $f_{\cM}$, and therefore by \cite[Proposition 7.2]{ident} a strong local minimizer of $f$. 
\end{proof}

We are now ready for the proofs of Theorem~\ref{thm:strict_saddle_hoora_gen} and Theorems~\ref{thm:strict_saddle_hoora_gen2}.

\begin{proof}[Proof of Theorem~\ref{thm:strict_saddle_hoora_gen}]
The result \cite[Corollary 4.8]{drusvyatskiy2016generic} shows that for almost all $v\in\R^d$, the function $g(x):=f(x)-\langle v,x\rangle$ has at most finitely many critical points. Moreover each such  critical point $\bar x$ lies on some $C^2$ active manifold $\cM$ of $g$ and the subdifferential map $\partial g\colon\R^d\rightrightarrows\R^d$ is metrically regular at $(\bar x,0)$. Applying  Corollary~\ref{cor:cor_for_partsmooth} to $g$ for such generic vectors $v$, we deduce that every critical point $\bar x$ of $g$ is either a strong local minimizer or a strict saddle of $g$. The proof is complete.
\end{proof}

\begin{proof}[Proof of Theorem~\ref{thm:strict_saddle_hoora_gen2}]
The proof is identical to that of Theorem~\ref{thm:strict_saddle_hoora_gen} with \cite[Theorem 5.2]{drusvyatskiy2016generic} playing the role of \cite[Corollary 4.8]{drusvyatskiy2016generic}.
\end{proof}

\section{Pathological Example}\label{sec:bad_example_detail}

\begin{thm}\label{thm:path}
	Consider the following function
	$$
	f(x, y) = \frac{1}{2}(|x| + |y|)^2 - \frac{\rho}{2} x^2
	$$
	Assume that $\lambda > \rho$. Define a mapping $T \colon \RR^d \rightarrow \RR$ by the following formula.   
	$$
	S(x, y) = \begin{cases}
	0 & \text{if $(x, y) = 0$;}\\
	\left(0, \frac{\lambda}{1+\lambda} y\right) &  \text{if $|x| \leq \frac{1}{1+\lambda} |y|$;} \\
	\left(\frac{\lambda}{1+\lambda - \rho}x, 0\right) & \text{if $|y| \leq  \frac{1}{1+\lambda -\rho}|x|$,}\\
	\end{cases}
	$$
	and if $\frac{1}{ (1+\lambda - \rho)} |x| < |y| < (1+\lambda) |x|$, we have 
	$$
	S(x,y) = \begin{cases}
	\frac{\lambda }{(1+\lambda)(1+\lambda - \rho)- 1} 
	\begin{bmatrix}
	(1+\lambda) & -1 \\
	-1 & (1+\lambda - \rho)
	\end{bmatrix}\begin{bmatrix}
	x \\ y
	\end{bmatrix}
	& \text{if $\sign(x) = \sign(y)$;}\\
	\frac{\lambda }{(1+\lambda)(1+\lambda - \rho)-1} 
	\begin{bmatrix}
	(1+\lambda) & 1 \\
	1 & (1+\lambda - \rho) 
	\end{bmatrix}\begin{bmatrix}
	x \\ y
	\end{bmatrix} & \text{if $\sign(x) \neq \sign(y)$.}
	\end{cases}
	$$
	Then $\prox_{(1/\lambda) f}(x, y) = S(x,y)$.
\end{thm}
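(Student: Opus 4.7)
Set $\phi(u,v) := f(u,v) + \tfrac{\lambda}{2}\|(u,v)-(x,y)\|^2$, so that by definition $S(x,y) = \argmin \phi$. The first observation is well-posedness: since $\tfrac{1}{2}(|u|+|v|)^2$ is convex and the quadratic $-\tfrac{\rho}{2}u^2+\tfrac{\lambda}{2}(u-x)^2+\tfrac{\lambda}{2}(v-y)^2$ rearranges to $\tfrac{\lambda-\rho}{2}u^2+\tfrac{\lambda}{2}v^2$ plus an affine term, which is strongly convex under $\lambda>\rho$, the proximal subproblem admits a unique minimizer. The remainder of the proof is a case analysis driven by which of the nondifferentiability loci $\{u=0\}$, $\{v=0\}$ contains this minimizer.

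I first treat the axis case $u^*=0$. Restricted to the line $u=0$, the function $\phi(0,\cdot)$ is a one-variable quadratic uniquely minimized at $v^* = \tfrac{\lambda}{1+\lambda}y$. By convexity, $(0,v^*)$ is the global minimizer of $\phi$ if and only if $0 \in \partial\phi(0,v^*)$. Using that $\partial_u \psi(0)=c\,[-1,1]$ for $\psi(u):=\tfrac{1}{2}(|u|+c)^2$ and any $c\geq 0$, the partial subdifferential $\partial_u\phi(0,v^*)$ equals $[-|v^*|-\lambda x,\ |v^*|-\lambda x]$; the containment $0\in \partial_u\phi(0,v^*)$ then reduces to $\lambda|x|\leq |v^*|=\tfrac{\lambda}{1+\lambda}|y|$, matching the first branch of the formula for $S$. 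A symmetric analysis on the axis $\{v=0\}$ produces $u^*=\tfrac{\lambda}{1+\lambda-\rho}x$, valid under $|y|\leq |x|/(1+\lambda-\rho)$.

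For the remaining interior case, suppose the minimizer lies in an open quadrant with $\sigma_1 := \sign(u^*)$ and $\sigma_2 := \sign(v^*)$. There $\phi$ is smooth, and writing $|u^*|=\sigma_1 u^*$, $|v^*|=\sigma_2 v^*$ in the first-order conditions yields the $2\times 2$ linear system
\[
(1+\lambda-\rho)u^* + \sigma_1\sigma_2 v^* = \lambda x, \qquad \sigma_1\sigma_2 u^* + (1+\lambda)v^* = \lambda y.
\]
The determinant $(1+\lambda-\rho)(1+\lambda)-1$ is strictly positive since both factors exceed one when $\lambda>\rho$, so inversion yields the two matrix formulas displayed in the statement, indexed by the sign $\sigma_1\sigma_2$. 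Verifying that the signs are self-consistent, i.e., that the solution actually has $\sign(u^*)=\sigma_1$ and $\sign(v^*)=\sigma_2$, translates directly into the middle-strip condition $|x|/(1+\lambda-\rho) < |y| < (1+\lambda)|x|$ together with $\sigma_1\sigma_2=\sign(xy)$.

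To conclude, the three regions (two axis wedges and the interior strip) exhaust $\R^2$, and uniqueness of the minimizer determines $S$ on each. The main obstacle is purely computational bookkeeping: checking that the axis and interior formulas agree on their common boundary lines (a direct substitution, e.g., the boundary $y=(1+\lambda)x$ collapses the same-sign interior formula to the $u=0$ axis formula) and that the sign constraints align correctly across the four open quadrants.
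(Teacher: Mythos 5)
Your proposal is correct and follows essentially the same route as the paper: both reduce the claim to the first-order optimality (subdifferential) condition for the strongly convex subproblem $\phi(u,v)=f(u,v)+\tfrac{\lambda}{2}\|(u,v)-(x,y)\|^2$ and settle it by a case analysis over the same three regions, the only difference being that you derive the formula by locating the minimizer (axes versus open quadrants, then sign consistency) while the paper plugs the stated formula into the optimality inclusion region by region. The one spot to tighten is the axis case: from $0\in\partial_u\phi(0,v^*)$ and $0\in\partial_v\phi(0,v^*)$ you cannot in general conclude $0\in\partial\phi(0,v^*)$ for a nonsmooth convex function, so you should record the chain-rule computation $\partial\phi(0,v^*)=\bigl([-|v^*|,|v^*|]-\lambda x\bigr)\times\{(1+\lambda)v^*-\lambda y\}$ (which is exactly the product set the paper writes in its inclusion), after which your reduction to $\lambda|x|\le|v^*|$ is fully justified; the analogous remark applies to the $v^*=0$ wedge.
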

\begin{proof}
	Let us denote the components of $S(x,y)$ by  $(x_+, y_+) = S(x,y)$. By first order optimality conditions, we have $\prox_{(1/\lambda) f}(x, y) = (x_+, y_+) $ if and only if 
	$$
	\lambda (x - (1-(1/\lambda)\rho)x_+, y - y_+) \in \begin{cases}
	\{x_+ + \sign(x_+)|y_+|\}\times \{\sign(y_+)|x_+| + y_+\} & \text{if $x_+ \neq 0$ and $y_+ \neq 0$;}\\
	([-1, 1]y_+)\times \{y_+\} & \text{if $x_+ = 0$ and $y_+ \neq  0$;}\\
	\{x_+\}\times ([-1, 1]x_+) & \text{if $x_+ \neq  0$ and $y_+ =  0$;}\\
	\{0\}\times \{0\} & \text{if $x_+ = 0$ and $y_+ =  0$.}\\
	\end{cases}
	$$
	Let us show that $(x_+, y_+)$ indeed satisfies this inclusion. 
	\begin{enumerate}
		\item If $(x,y) = 0$, then $x_+ = y_+ = 0$, and the pair satisfies the inclusion.
		\item If $|x| \leq \frac{1}{1 + \lambda}|y|$ and $y \neq 0$, then $x_+ = 0$, $y_+ = \frac{\lambda}{1+\lambda}y$, and
		$$
		\lambda (x - (1-(1/\lambda)\rho)x_+, y - y_+) = \lambda\left(x, \frac{1}{1 + \lambda}y\right) \in ([-1, 1]y_+) \times \{y_+\}.
		$$
		Thus the pair satisfies the inclusion.
		\item If $|y| \leq  \frac{1}{1+\lambda -\rho}|x|$ and $x \neq 0$, then $x_+ = \frac{\lambda}{(1+\lambda - \rho)}x$, $y_+ = 0$, and
		$$
		\lambda (x - (1-(1/\lambda)\rho)x_+, y - y_+) = \lambda \left(x - \frac{\lambda-\rho}{(1+\lambda - \rho)}x, y\right) \in \{x_+\}\times ([-1, 1]x_+).
		$$
	\end{enumerate}
	For the remaining two cases, let us assume that $\frac{1}{ (1+\lambda - \rho)} |x| < |y| < (1+\lambda) |x|$. 
	\begin{enumerate}[resume]
		\item If $\sign(x) = \sign(y)$, let $s = \sign(x)$ and note that  
		\begin{align*}
		\begin{bmatrix}
		x_+ \\y_+
		\end{bmatrix}
		&= \frac{\lambda }{(1+\lambda)(1+\lambda - \rho)- 1} 
		\begin{bmatrix}
		(1+\lambda) & -1 \\
		-1 & (1+\lambda - \rho)
		\end{bmatrix}\begin{bmatrix}
		x\\ y
		\end{bmatrix}\\
		&= \frac{s\lambda }{(1+\lambda)(1+\lambda - \rho)- 1} 
		\begin{bmatrix}
		(1+\lambda)|x|  -|y| \\
		-|x| + (1+\lambda - \rho)|y|
		\end{bmatrix}
		\end{align*}
		From this equation 
		we 
		learn 
		$\sign(x_+) = \sign(y_+) = s$. 
		Inverting the matrix 
		we 
		also learn
		\begin{align*}
		\lambda \begin{bmatrix}
		x \\
		y
		\end{bmatrix} =\begin{bmatrix}
		(1+\lambda - \rho)  & 1 \\
		1 & (1+\lambda) 
		\end{bmatrix}
		\begin{bmatrix}
		x_+ \\
		y_+
		\end{bmatrix}
		&=
		\begin{bmatrix}
		x_+ + \lambda(1 - \rho/\lambda)x_+ + y_+ \\
		x_+ + y_+ + \lambda y_+
		\end{bmatrix} \\
		&= \begin{bmatrix}
		x_+ + \sign(x_+) |y_+| +  \lambda(1 - \rho/\lambda)x_+  \\
		\sign(y_+) |x_+| + y_+ + \lambda y_+
		\end{bmatrix}.
		\end{align*}
		Thus the pair satisfies the inclusion.
		\item If $\sign(x) \neq \sign(y)$, let $s = \sign(x)$ and note that  
		\begin{align*}
		\begin{bmatrix}
		x_+ \\y_+
		\end{bmatrix}
		&= \frac{\lambda }{(1+\lambda)(1+\lambda - \rho)- 1} 
		\begin{bmatrix}
		(1+\lambda) & 1 \\
		1 & (1+\lambda - \rho)
		\end{bmatrix}\begin{bmatrix}
		x\\ y
		\end{bmatrix}\\
		&= \frac{s\lambda }{(1+\lambda)(1+\lambda - \rho)- 1} 
		\begin{bmatrix}
		(1+\lambda)|x|  -|y| \\
		|x| - (1+\lambda - \rho)|y|
		\end{bmatrix}
		\end{align*}
		From this equation 
		we 
		learn 
		$\sign(x_+) \neq \sign(y_+) $. 
		Inverting the matrix 
		we 
		also learn
		\begin{align*}
		\lambda \begin{bmatrix}
		x \\
		y
		\end{bmatrix} =\begin{bmatrix}
		(1+\lambda - \rho)  & -1 \\
		-1 & (1+\lambda) 
		\end{bmatrix}
		\begin{bmatrix}
		x_+ \\
		y_+
		\end{bmatrix}
		&=
		\begin{bmatrix}
		x_+ + \lambda(1 - \rho/\lambda)x_+ - y_+ \\
		-x_+ + y_+ + \lambda y_+
		\end{bmatrix} \\
		&= \begin{bmatrix}
		x_+ + \sign(x_+) |y_+| +  \lambda(1 - \rho/\lambda)x_+  \\
		\sign(y_+) |x_+| + y_+ + \lambda y_+
		\end{bmatrix}.
		\end{align*}
		Thus the pair satisfies the inclusion.
	\end{enumerate}
	Therefore, the proof is complete. 
\end{proof}

\begin{cor}[Convergence to Saddles]
Assume the setting 
	of 
	Theorem~\ref{thm:path}. 
Let $\alpha \in (0, 1]$
	and
define the operator $T : = (1-\alpha) I + \alpha S$ on $\RR^2$.
Then the cone 
$
\cK = \{(x,y) \colon |x| \leq (1+\lambda)^{-1}y\}
$
satisfies $T\cK \subseteq \cK$. 
Moreover, for any $(x, y) \in \cK$, it holds that $T^k(x,y) = ((1-\alpha)^k x, (1 - \alpha(1 - \lambda(1+\lambda)^{-1}))^ky)$
	linearly converges
to the origin as $k$ tends to infinity.
\end{cor}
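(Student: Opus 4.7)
The plan is to verify both claims by a direct computation that uses only the relevant branch of the piecewise formula for $S$ in Theorem~\ref{thm:path}. Observe first that any $(x,y)\in\cK$ automatically satisfies $y\geq 0$ (since $|x|\geq 0$) and lies in the second case of that piecewise definition, because $|x|\leq(1+\lambda)^{-1}y=(1+\lambda)^{-1}|y|$. Consequently, for every $(x,y)\in\cK$,
$$S(x,y)=\left(0,\tfrac{\lambda}{1+\lambda}y\right),\qquad T(x,y)=\bigl((1-\alpha)x,\ \beta y\bigr),$$
where I abbreviate $\beta:=1-\alpha\bigl(1-\tfrac{\lambda}{1+\lambda}\bigr)=1-\tfrac{\alpha}{1+\lambda}$. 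Since $\alpha\in(0,1]$ and $\lambda>0$, the factor $\beta$ lies in $(0,1)$.

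To establish the invariance $T\cK\subseteq\cK$, I need to check $(1-\alpha)|x|\leq(1+\lambda)^{-1}\beta y$ together with $\beta y\geq 0$. The second is immediate from $\beta>0$ and $y\geq 0$. The first reduces, after multiplying the defining inequality $|x|\leq(1+\lambda)^{-1}y$ by the nonnegative factor $1-\alpha$, to the scalar comparison $1-\alpha\leq\beta$, i.e.\ $\alpha\geq\alpha/(1+\lambda)$, which holds trivially since $\lambda>0$. This single scalar comparison is the one place where one must be a touch careful---it is the reason the relaxation does not break invariance---but it is hardly an obstacle.

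Once invariance is in hand, a one-line induction on $k$ yields the claimed iterate formula $T^k(x,y)=((1-\alpha)^k x,\beta^k y)$: the inductive step is legitimate precisely because every intermediate iterate remains in $\cK$ and hence in the same branch of $S$. Finally, since $1-\alpha\in[0,1)$ and $\beta\in(0,1)$, both coordinate sequences contract geometrically, so $T^k(x,y)\to 0$ at a linear rate. This completes the plan; as indicated, the argument is essentially mechanical once one notices that $\cK$ sits inside a single branch of $S$.
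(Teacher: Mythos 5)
Your proof is correct and follows essentially the same route as the paper: identify that on $\cK$ the map $S$ reduces to $(x,y)\mapsto(0,\tfrac{\lambda}{1+\lambda}y)$ via Theorem~\ref{thm:path}, deduce cone invariance, and iterate the resulting diagonal action. The only cosmetic difference is that the paper invokes convexity of $\cK$ to reduce invariance of $T$ to the inclusion $S\cK\subseteq\cK$, whereas you verify $T\cK\subseteq\cK$ directly through the scalar inequality $1-\alpha\leq 1-\tfrac{\alpha}{1+\lambda}$; both amount to the same computation.
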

\begin{proof}
Since $\cK$ 
	is 
convex, 
	it 
		suffices 
	to show that $S \cK \subseteq \cK$.  
This 
	follows 
from Theorem~\ref{thm:path}. 
\end{proof}

\end{document}